\theoremstyle{plain}
\newtheorem{thm}{Theorem}[section]
\newtheorem{thmdef}{Theorem--Definition}[section]
\newtheorem*{thm*}{Theorem}
\newtheorem*{cor*}{Corollary}
\newtheorem{prop}[thm]{Proposition}
\newtheorem{lem}[thm]{Lemma}
\newtheorem{cor}[thm]{Corollary}
\newtheorem*{claim*}{Claim}
\theoremstyle{definition}
\newtheorem{defn}[thm]{Definition}
\newtheorem{ex}[thm]{Example}
\newtheorem{rem}[thm]{Remark}
\newtheorem{ques}[thm]{Question}
\newtheorem{prob}[thm]{Problem}
\theoremstyle{remark}
\numberwithin{equation}{thm}
\def\Hom{\mathrm{Hom}}
\def\Ext{\mathrm{Ext}}
\def\coeff{\mathrm{coeff}}
\def\rank{\operatorname{rank}}
\def\e{\mathrm{e}}
\def\frm{\mathfrak m}
\def\K{\mathrm{K}}
\def\H{\mathrm{H}}
\newcommand{\calX}{\mathcal{X}}
\newcommand{\fkm}{\mathfrak{m}}
\newcommand{\mapright}[1]{%
\smash{\mathop{%
\hbox to 1cm{\rightarrowfill}}\limits^{#1}}}
\newcommand{\mapleft}[1]{%
\smash{\mathop{%
\hbox to 1cm{\leftarrowfill}}\limits_{#1}}}
\def\Supp{\mathrm{Supp}}
\def\height{\mathrm{ht}}
\def\Spec{\operatorname{Spec}}
\def\Syz{\mathrm{Syz}}
\begin{document}

\setlength{\baselineskip}{15pt}
\title{Ulrich ideals and modules over two-dimensional rational singularities}
\pagestyle{plain}
\author{Shiro Goto, Kazuho Ozeki, Ryo Takahashi, Kei-ichi Watanabe, Ken-ichi Yoshida}
\address{S. Goto: Department of Mathematics, School of Science and Technology, Meiji University, 1-1-1 Higashimita, Tama-ku, Kawasaki 214-8571, Japan}
\email{goto@math.meiji.ac.jp}
\address{K. Ozeki: Department of Mathematical Science, Faculty of Science, Yamaguchi University, 1677-1 Yoshida, Yamaguchi 853-8512, Japan}
\email{ozeki@yamaguchi-u.ac.jp}
\address{R. Takahashi: Graduate School of Mathematics, Nagoya University, Furocho, Chikusaku, Nagoya 464-8602, Japan}
\email{takahashi@math.nagoya-u.ac.jp}
\urladdr{http://www.math.nagoya-u.ac.jp/~takahashi/}
\address{K.-i. Watanabe and K. Yoshida: Department of Mathematics, College of Humanities and Sciences, Nihon University, 3-25-40 Sakurajosui, Setagaya-Ku, Tokyo 156-8550, Japan}
\email{watanabe@math.chs.nihon-u.ac.jp}
\email{yoshida@math.chs.nihon-u.ac.jp}
\thanks{2010 {\em Mathematics Subject Classification.} 13C14, 14B05, 14C25, 14E16}
\thanks{{\em Key words and phrases.} Ulrich ideal, Ulrich module, special Cohen--Macaulay module, rational singularity, finite CM--representation type}
\thanks{This work was partially supported by JSPS Grant-in-Aid for Scientific Research (C) 20540050/22540047/22540054/23540059, JSPS Grant-in-Aid for Young Scientists (B) 22740008/22740026 and by JSPS Postdoctoral Fellowships for Research Abroad}
\begin{abstract}
The main aim of this paper is to 
classify Ulrich ideals and Ulrich modules over 
two-dimensional Gorenstein rational singularities 
(rational double points) from a geometric point of view. 
To achieve this purpose, we introduce the notion of 
(weakly) special Cohen--Macaulay modules with respect to 
ideals, and study the relationship between those modules and 
Ulrich modules with respect to good ideals. 
\end{abstract}
\maketitle
\tableofcontents

\section{Introduction}\label{intro}

In the paper \cite{GOTWY} we established the theory of Ulrich ideals 
and modules with a generalized form. 
The concept of Ulrich modules, or maximally generated maximal Cohen--Macaulay modules
(MGMCM modules) was introduced by \cite{U,BHU}.
In our language,  MGMCM modules are just Ulrich modules with respect 
to the maximal ideal.  
While there are very few MGMCM modules in general, 
any maximal Cohen--Macaulay module over a hypersurface local ring 
of multiplicity (degree) $2$ is a finite direct sum of free modules and 
Ulrich modules.  So, our Ulrich modules include  much more 
members than MGMCM modules.

\par 
To state the main results, let us begin with the definition of 
Ulrich ideals and modules. 
Let $A$ be a Cohen--Macaulay local ring with  maximal ideal 
$\fkm$ and $d = \dim A \geq 0$, 
and let $I \subset A$ be a nonparameter $\fkm$-primary ideal. 
For simplicity, we assume that $I$ contains a parameter ideal 
$Q = (a_1, a_2, \ldots, a_d)$ 
of $A$ as a reduction, that is, $I^{r+1}=QI^r$ for some integer $r \ge 1$. 

\begin{defn}\label{Uideal}
We say that $I$ is an \textit{Ulrich ideal} of $A$ 
if it satisfies the following conditions$:$
\begin{enumerate}
\item[$(1)$] $I^2=QI$. 
\item[$(2)$] $I/I^2$ is a free $A/I$-module.
\end{enumerate}
Let $\mathcal{X}_A$ denote the set of all Ulrich ideals that are not parameter ideals.  
\end{defn} 

For instance, if $(A,\fkm)$ is a Cohen--Macaulay local ring of maximal embedding dimension (\cite{S1}) if and only if $\fkm$ is an Ulrich ideal.
 
\begin{defn}\label{Umodule}
Let $M$ be a nonzero finitely generated $A$-module. 
Then we say that $M$ is an \textit{Ulrich $A$-module with respect to $I$}, 
if the following conditions are satisfied$:$ 
\begin{itemize}
\item[(1)] $M$ is a maximal Cohen--Macaulay $A$-module. 
\item[(2)] $\e_I^0(M)=\ell_A(M/IM)$.
\item[(3)] $M/IM$ is $A/I$-free.
\end{itemize}
Here $\e_I^0(M)$ denotes the multiplicity of $M$ with respect to $I$ 
and $\ell_A(M/IM)$ denotes the length of the $A$-module $M/IM$.
\end{defn}

In \cite{GOTWY}, we proved that all higher syzygy modules $\Syz_A^i(A/I)$ of an Ulrich ideal $I$ 
are Ulrich modules with respect to $I$. 
Moreover, if $A$ is of finite CM-representation type, then $\mathcal{X}_A$ is a finite set. 
Recall here that a Cohen--Macaulay local ring is said to be 
\textit{of finite CM-representation type} if 
there are only a finite number of isomorphism classes of indecomposable maximal Cohen--Macaulay $A$-modules. 
Thus we consider the following natural question. 

\begin{prob} \label{Prob-nat}
Let $(A,\fkm)$ be a Cohen--Macaulay local ring of finite CM-representation type. 
\begin{enumerate}
 \item Classify all Ulrich ideals $I$ of $A$.  
 \item Classify all Ulrich $A$-modules with respect to 
a given $\fkm$-primary ideal $I$.  
 \item Determine all ideals $I$ so that there exists 
an Ulrich $A$-module with respect to $I$.  
\end{enumerate}
\end{prob}

\par 
In \cite[Section 9]{GOTWY}, we gave an answer to the problem as above 
in the case of a one-dimensional Gorenstein local ring of finite CM-representation type by using techniques from representation theory of 
maximal Cohen--Macaulay modules.
We want to give a complete answer to the question as above in the case 
of a two-dimensional Gorenstein local ring of finite CM-representation type. 
Notice that $2$-dimensional Gorenstein local rings of finite 
CM-representation type (over an algebraically closed field of characteristic $0$)
are $2$-dimensional Gorenstein rational singularities.  

\par 
Let us explain the organization of the paper. 
In Section 3, we introduce the notion of weakly special Cohen--Macaulay modules;
let $A$ be a Gorenstein local domain and $I \subset A$ an $\fkm$-primary ideal. 
An maximal Cohen--Macaulay $A$-module $M$ is called a weakly special Cohen--Macaulay 
$A$-module with respect to $I$ if $\mu_A(M)=2 \cdot \rank_A M$ and $M/IM$ is 
$A/I$-free, 
where $\mu_A(M)$ denotes the cardinality of a minimal set of generators of $M$; 
see Definition \ref{Wspdef}. 
Then we prove that $M$ is an Ulrich $A$-module with respect to $I$ and $I$ 
is a good ideal (see Section 2)
if and only if $M$ is a weakly special Cohen--Macaulay $A$-module with respect to $I$
for a Gorenstein local domain $A$ and a nonparameter $\frm$-primary stable ideal $I$; 
see Theorem \ref{UvsWsp} for details. 
As an application, we give a partial answer to the Problem \ref{Prob-nat}(3). 
This implies that $I$ is an Ulrich ideal if and only if 
there exists an Ulrich $A$-module 
with respect to $I$ for any two-dimensional Gorenstein rational singularity. 

\par 
In Section 4, we modify the notion of special Cohen--Macaulay $A$-modules 
introduced by Wunram \cite{Wu}: 
Let $A$ be a two-dimensional rational singularity, and $M$ a maximal Cohen--Macaulay 
$A$-module without free summands. Then $M$ is a special Cohen--Macaulay $A$-module 
with respect to $I$ if and only if $\Ext_A^1(M,A)=0$ and $M/IM$ is $A/I$-free; 
see Definition \ref{SpCM-def}. 
Special Cohen--Macaulay $A$-modules are weakly special Cohen--Macaulay $A$-modules
(but the converse is not true in general). 
The main result in this section is the following theorem, which gives 
a criterion for $I$ (resp. $Z$) to be a special ideal (resp. a special cycle)
in terms of cycles. 

\par \vspace{2mm} \par \noindent 
{\bf Theorem  \ref{Spcycle}.}
Let $Z=\sum_{j=1}^r a_j E_j \ne Z_0$ be an anti-nef cycle 
on the minimal resolution $X \to \Spec A$, 
and put $I=I_Z$. 
Let $Z_0=\sum_{j=1}^r n_j E_j$ denote the fundamental cycle on $X$.  
Then the following conditions are equivalent for every $i, 1\le i\le r$. 
\begin{enumerate}
 \item[$(1)$]  $M_i$ is a special Cohen--Macaulay $A$-module with respect to $I$. 
 \item[$(2)$]  $a_i= n_i \cdot \ell_A(A/I) $. 
 \item[$(3)$]  There exist positive cycles $0 < Y_s \le \ldots \le Y_1 \le Z_0$ 
and anti-nef cycles $Z_1,\ldots,Z_s$ 
so that $Z_k = Z_{k-1}+Y_{k}$ for each $k = 1,\ldots,s$ and 
\[
Z_{k-1}\cdot Y_{k} =0, \quad p_a(Y_{k})=0\quad  \text{and} 
\quad \coeff_{E_i} Y_{k} =n_i
\quad \text{for every $k =1,2,\ldots,s$}, 
\]
where $\coeff_{E_i} W$ stands for the coefficient of $E_i$ in a cycle $W$. 
\end{enumerate}
When this is the case, $\ell_A(A/I)=s+1$ and every $I_{k} := I_{Z_{k}}$ 
is a special ideal. 
Moreover, for every $k = 1,2,\ldots,s$, we obtain that $\Supp(Y_{k})$ is connected,   
$\Supp(Y_{k}) \subset \cup \{E_j \subset \Supp(Y_{k-1}) \,|\,E_jZ_{k-1}=0\}$, 
and $Y_{k}$ is the fundamental cycle on $\Supp(Y_{k})$. 
 
\par 
In Section 5, we give a complete list of Ulrich ideals and Ulrich modules 
with respect to some ideal $I$ for any two-dimensional Gorenstein rational Cohen--Macaulay
singularity.
Main tools are the Riemann--Roch formula, the McKay correspondence and 
results in Section 4.  
The following theorem is the main result in this paper.

\begin{thm} \label{Intro-Main}
Let $A$ be a two-dimensional Gorenstein rational singularity. 
Then the set $\calX_A$ of all nonparameter Ulrich ideals is given by$:$
\begin{center}
\begin{tabular}{cl}
$(A_{2m})$ &  $\{(x,y,z),(x,y^2,z),\ldots,(x,y^m,z)\}$. 
 \\[2mm]
$(A_{2m+1})$ & $\{(x,y,z),(x,y^2,z),\ldots,(x,y^{m+1},z)\}$.  \\[2mm] 
$(D_{2m})$ & $\{(x,y,z),(x,y^2,z),\ldots,(x,y^{m-1},z),$  \\
& ~\quad $(x+\sqrt{-1}y^{m-1},y^{m},z),(x-\sqrt{-1}y^{m-1},y^m,z), (x^2,y,z)\}$. 
\\[2mm]
\end{tabular}

\begin{tabular}{cl}
$(D_{2m+1})$  & $\{(x,y,z),(x,y^2,z),\ldots,(x,y^{m},z),(x^2,y,z)\}$. \\[2mm]
$(E_6)$ & $\{(x,y,z),(x,y^2,z) \}$. \\[2mm]
$(E_7)$ & $\{(x,y,z),(x,y^2,z),(x,y^3,z)\}$.  \\[2mm]
$(E_8)$ & $\{(x,y,z),(x,y^2,z) \}$. 
\end{tabular}
\end{center}
\end{thm}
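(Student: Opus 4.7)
The strategy is to convert the question into a combinatorial problem on the exceptional divisor of the minimal resolution and then perform a case-by-case analysis on the Dynkin type $A_n, D_n, E_6, E_7, E_8$ of the singularity.

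First I would use Theorem~\ref{UvsWsp} to identify an Ulrich ideal $I$ with the existence of a weakly special Cohen--Macaulay module with respect to $I$. Since every $\fkm$-primary integrally closed ideal in a two-dimensional rational singularity is good (in the sense of Section~2) and since the higher syzygies of $A/I$ are Ulrich modules (\cite{GOTWY}), one obtains an associated special CM module $M$ with respect to $I$. Under the McKay correspondence, indecomposable non-free MCM modules over a rational double point correspond bijectively to the exceptional components $E_i$ of the minimal resolution $X \to \Spec A$, and Theorem~\ref{Spcycle} translates the special-module condition at $E_i$ into the existence of a chain of anti-nef cycles $Z_0 < Z_1 < \cdots < Z_s = Z$ with $Y_k := Z_k - Z_{k-1}$ connected and fundamental on its support, $Z_{k-1}\cdot Y_k = 0$, $p_a(Y_k) = 0$, and $\coeff_{E_i} Y_k = n_i$. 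The corresponding Ulrich ideal is recovered as $I = I_Z$.

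Next, for each Dynkin type I would enumerate the admissible chains by iteratively peeling cycles off the fundamental cycle $Z_0$. For type $A_n$ the dual graph is a chain with all $n_j = 1$, so $\{E_j : E_j\cdot Z_{k-1} = 0\}$ consists at each step of endpoints of the remaining support; every admissible chain is obtained by peeling from a single endpoint, and matching against the equation $x^{n+1} + y^2 + z^2 = 0$ yields the ideals $(x, y^k, z)$ with $k$ bounded by the stated range. For $D_n$ the dual graph has a trivalent vertex with two short tails and one long tail; peeling along the long tail reproduces a similar family $(x, y^k, z)$ together with the ideal $(x^2, y, z)$, while peeling along the short tails produces, in the even case $D_{2m}$, a pair of symmetric cycles. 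For $E_6, E_7, E_8$ the small diameter of the diagram combined with the constraint $\coeff_{E_i} Y_k = n_i$ at the chosen vertex restricts the chain length $s+1$ to a very short list, yielding the two or three Ulrich ideals stated. Finally, for each admissible cycle $Z$ one computes the divisors of $x, y, z$ on $X$ in the standard ADE presentation and reads off a minimal generating set of $I_Z$.

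The main obstacle will be the analysis at the trivalent vertex in type $D_n$. One must show that in the even case $D_{2m}$ the symmetric peeling of the two short arms contributes \emph{two} distinct admissible chains --- corresponding to the ideals $(x+\sqrt{-1}y^{m-1}, y^m, z)$ and $(x-\sqrt{-1}y^{m-1}, y^m, z)$ --- whereas in the odd case $D_{2m+1}$ the parity obstructs this splitting. On the geometric side this amounts to a careful check of which connected subgraphs of $\{E_j : E_j\cdot Z_{k-1}=0\}$ have a fundamental cycle with the required coefficient $n_i$ at the distinguished vertex; on the algebraic side it corresponds to the factorization behavior of the defining equation $x^{n-1}+xy^2+z^2$ over $\C$ depending on the parity of $n$. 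Matching these two perspectives rigorously, and verifying that no further sporadic chain occurs at the trivalent vertex, is the delicate combinatorial-geometric step of the proof.
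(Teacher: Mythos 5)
Your overall route coincides with the paper's: reduce via Theorem \ref{SPvsUlrich-RDP} (together with Proposition \ref{UvsWspideal}) to the statement that Ulrich ideals are exactly the special ideals $I_Z$, then enumerate the admissible chains $Z_0 < Z_1 < \cdots < Z_s$ of Theorem \ref{Spcycle} case by case on the Dynkin diagram. However, your combinatorial description of the peeling in type $A_n$ is wrong in a way that matters. For the chain $Z_0=\sum_{i=1}^n E_i$ one has $E_jZ_0=0$ precisely for the \emph{interior} vertices $E_2,\dots,E_{n-1}$ (an endpoint satisfies $E_1Z_0=-1$), so $Y_1$ is the fundamental cycle on the interior segment and the support shrinks symmetrically from \emph{both} ends at each step, producing $Z_k$ with coefficient profile $1,2,\dots,k+1,\dots,k+1,\dots,2,1$. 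Your prescription of ``peeling from a single endpoint'' would take $Y_1=E_1$, and $Z_0+E_1$ is not anti-nef ($(Z_0+E_1)E_2=1$), so as written your enumeration produces no chains beyond $Z_0$. A similar care is needed in type $D_n$: the sporadic ideal $(x^2,y,z)$ arises from the isolated component $\{E_1\}$ at the end of the long tail, while the pair of cycles $Z_{m-1},Z_{m-1}'$ in type $D_{2m}$ arises only at the last step, when the two short-arm endpoints become separate connected components of $\{E : EZ_{m-2}=0\}$; this is a statement about connected components, not about a ``parity obstruction to splitting.''

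Your final step also diverges from the paper's, and here the paper's argument is worth noting because it is substantially lighter. You propose to compute the divisors of $x,y,z$ on $X$ for each admissible cycle $Z$ and read off generators of $I_Z$. The paper avoids this entirely (Corollary \ref{RDP-Uideal}): it checks directly by elementary ideal arithmetic that each ideal in the stated list is Ulrich (each has the form $I=Q+(z)$ with $I^2=QI$, $\ell_A(A/Q)=2\ell_A(A/I)$ and $\mu(I)=3$), and then observes that the number of such ideals equals the number of Ulrich cycles produced by the case analysis, so the two lists must coincide. This counting argument buys you a complete proof without ever matching an individual cycle to an individual ideal, which is exactly the ``delicate'' step you flag at the trivalent vertex; with the count in hand, the identification of $(x\pm\sqrt{-1}y^{m-1},y^m,z)$ with the two symmetric cycles never has to be made explicit.
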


\par 
In Section 6, we discuss Ulrich ideals of two-dimensional non-Gorenstein 
rational singularities. 
We show that any Ulrich ideal is an integrally closed  
and represented on the minimal resolutuion of singularities, 
and also is a special ideal in the sense of Section 4. 
For instance, any non-Gorenstein cyclic quotient singularity admits a unique 
Ulrich ideal, that is, the maximal ideal; see also Section 7.  

\section{Preliminaries} \label{Pre}
\subsection{Ulrich ideals and modules}

First we recall the notion of good ideals in a Gorenstein local ring. 

\begin{defn}[\textrm{See \cite{GIW}}] \label{Gorgood}
Suppose that $A$ is a Gorenstein local ring. 
Let $I \subset A$ be a nonparameter $\fkm$-primary ideal.  
If $I^2=QI$ holds for some minimal reduction $Q$ of $I$, 
then $I$ is called a \textit{stable} ideal. 
If $I$ is stable and $Q \colon I=I$, then $I$ is called a \textit{good} ideal.
An $\fkm$-primary stable ideal $I$ is good if and only if 
$e_I^0(A)=2 \cdot \ell_A(A/I)$. 
\end{defn}

\par
An Ulrich ideal in a Gorenstein local ring is always a good ideal.

\begin{prop}[\textrm{See \cite[Lemma 2.3, Corollary 2.6]{GOTWY}}] \label{Uchar}
Let $A$ be a $d$-dimensional Cohen--Macaulay local ring, and let $I \subset A$ be 
a nonparameter $\fkm$-primary ideal. Then$:$ 
\begin{enumerate}
 \item[$(1)$] Suppose that $I$ is stable. 
Then $e_I^0(A) \le (\mu_A(I)-d+1)\cdot \ell_A(A/I)$. 
Equality holds if and only if $I$ is an Ulrich ideal. 
 \item[$(2)$] Suppose that $A$ is Gorenstein. 
Then the following conditions are equivalent$:$
\begin{enumerate}
 \item[$(a)$] $I$ is an Ulrich ideal. 
 \item[$(b)$]  $I$ is a good ideal and $\mu_A(I)=d+1$. 
 \item[$(c)$]  $I$ is a good ideal and $A/I$ is Gorenstein. 
\end{enumerate}
\end{enumerate}
\end{prop}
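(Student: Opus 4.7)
For part (1), the plan is to exploit the stable hypothesis $I^2 = QI$ to compute the multiplicity exactly: since $Q$ is a minimal reduction generated by a system of parameters in a Cohen--Macaulay ring, $e_I^0(A) = \ell_A(A/Q) = \ell_A(A/I) + \ell_A(I/Q)$. I would then bound $\ell_A(I/Q)$ using the short exact sequence
\[
0 \to Q/QI \to I/QI \to I/Q \to 0
\]
of $A/I$-modules. The inequality on the right comes from two length estimates: first, $\ell_A(I/QI) \le \mu_A(I)\cdot \ell_A(A/I)$, because $I/QI$ is a quotient of $(A/I)^{\mu_A(I)}$, with equality iff $I/I^2$ is $A/I$-free; second, $\ell_A(Q/QI) = d\cdot \ell_A(A/I)$ \emph{exactly}. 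The latter equality is the key technical input: Koszul exactness on the regular sequence $a_1,\ldots,a_d$ says every relation on these elements is a Koszul relation, and each such relation vanishes modulo $I$ (since $a_i\in I$), so the natural surjection $(A/I)^d\twoheadrightarrow Q/QI$ is in fact an isomorphism. Assembling the three identities yields the inequality, and characterizes equality as the Ulrich condition.

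For part (2), the bridge between the three conditions is the canonical isomorphism $\omega_{A/I}\cong (Q:I)/Q$, obtained by viewing $A/I$ as a module over the zero-dimensional Gorenstein ring $A/Q$ and computing $\Hom_{A/Q}(A/I,A/Q)=(0:_{A/Q} I/Q)$; both sides have length $\ell_A(A/I)$. For (a)$\Rightarrow$(b), an Ulrich ideal gives $I/Q\cong (A/I)^{\mu_A(I)-d}$ from the necessarily split short exact sequence of $A/I$-free modules; comparing with the inclusion $I/Q\hookrightarrow (Q:I)/Q$ shows $(\mu_A(I)-d)\ell_A(A/I)\le \ell_A(A/I)$, so $\mu_A(I)\le d+1$, and since $I$ is a nonparameter ideal we conclude $\mu_A(I)=d+1$; then both sides of the inclusion have the same length, forcing $Q:I=I$, i.e.\ $I$ is good. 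For (b)$\Rightarrow$(c), goodness provides $\omega_{A/I}\cong I/Q$, while $\mu_A(I)=d+1$ together with the linear independence of the images of $a_1,\ldots,a_d$ in $I/\fkm I$ gives $\mu_A(I/Q)=\mu_A(I)-d=1$; so $\omega_{A/I}$ is a faithful cyclic $A/I$-module and hence isomorphic to $A/I$, which means $A/I$ is Gorenstein. For (c)$\Rightarrow$(a), $A/I$ Gorenstein and $I$ good yield $I/Q\cong \omega_{A/I}\cong A/I$; both ends of the short exact sequence are then $A/I$-free, the sequence splits, and $I/I^2\cong (A/I)^{d+1}$ is free, establishing the Ulrich property (together with $I^2=QI$, which is part of goodness).

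The main technical obstacle is the exact identification $Q/QI\cong (A/I)^d$ via Koszul exactness, which requires the Cohen--Macaulay hypothesis in an essential way; without it one has only a surjection, and the inequality of part (1) can fail. Once that step is in hand, the rest is a matter of tracking the $A/I$-module structure across the basic three-term exact sequence above and applying Gorenstein duality over the Artinian ring $A/Q$.
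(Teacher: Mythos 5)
Your proof is correct. Note that the paper itself gives no argument for this Proposition: it is imported verbatim from \cite[Lemma 2.3, Corollary 2.6]{GOTWY}, so there is no internal proof to compare against; your write-up is a sound, self-contained reconstruction along the standard lines of the cited source. All the key steps check out: the isomorphism $Q/QI\cong (A/I)^d$ via Koszul syzygies (every syzygy of the regular sequence $a_1,\dots,a_d$ has entries in $Q\subseteq I$), the length bookkeeping in the sequence $0\to Q/QI\to I/QI\to I/Q\to 0$, and the duality $(Q:I)/Q\cong K_{A/I}$ over the Artinian Gorenstein ring $A/Q$. The only two places worth spelling out a little more are (i) the claim that the sequence of $A/I$-modules splits in (a)$\Rightarrow$(b): this is because the images of $a_1,\dots,a_d$ in $I/\fkm I$ are linearly independent (a standard property of minimal reductions), so they extend to a minimal generating set of $I$ whose classes form a free basis of $I/I^2$, exhibiting $Q/I^2$ as a free direct summand; and (ii) the inequality $\mu_A(I)\geq d+1$, which follows since a $d$-generated $\fkm$-primary ideal in a $d$-dimensional ring is a parameter ideal, excluded by hypothesis. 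Neither is a gap, just an invocation of standard facts.
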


\par
Let us give two typical examples of Ulrich ideals. 

\begin{ex} \label{ex-mUlrich}
It is well-known that $\mu_A(\fkm) \le e_{\fkm}^0(A) + \dim A-1$ holds true. 
Equality holds if and only if the maximal ideal $\fkm$ is stable; see \cite{S1}. 
Then $A$ is said to have  \textit{maximal embedding dimension}.
By \ref{Uchar} (1), $\fkm$ is an Ulrich ideal if and only if $A$ has 
maximal embedding dimension. 

\par 
Suppose that $A$ is a two-dimensional hypersurface of degree $2$. 
Then the maximal ideal $\fkm$ is an Ulrich ideal. 
Moreover, a power $\fkm^k$ is a good ideal but not an Ulrich ideal 
for all $k \ge 2$. 
\end{ex}

\begin{ex} \label{ex-typical}
Let $A=k[[x_0,x_1,\ldots,x_d]]/(x_0^{n_0}+\cdots +x_d^{n_d})$ be a diagonal 
hypersurface.  
Suppose that $n_0=2m$ is even. 
Then $(x_0^{m},x_1^{k_1},\ldots,x_d^{k_d})$ is an Ulrich ideal 
for every $1 \le k_i \le \lfloor \frac{n_i}{2} \rfloor$ $(i=1,2,\ldots,d)$. 
\end{ex}

\par 
The following theorem gives a relationship between Ulrich ideals and Ulrich 
modules with respect to ideals. 

\begin{thm}[\textrm{cf. \cite[Theorem 4.1]{GOTWY}}] \label{Syz}
Let $A$ be a Cohen--Macaulay local ring of dimension $d$. 
Then the following conditions are equivalent$:$
\begin{enumerate}
 \item[$(1)$] $I$ is a nonparameter Ulrich ideal. 
 \item[$(2)$] $\Syz_A^i(A/I)$ is an Ulrich $A$-module with respect to $I$ 
for all $i \ge d$. 
\end{enumerate}
\end{thm}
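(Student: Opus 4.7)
The plan is to treat both implications via the minimal free resolution $\cdots\to F_{i+1}\to F_i\to\cdots\to F_0=A\to A/I\to 0$ of $A/I$, with the key reduction being to the Artinian quotient $\bar A=A/Q$.

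For $(1)\Rightarrow(2)$, I first verify that $\Syz_A^i(A/I)$ is maximal Cohen--Macaulay for $i\ge d$ by applying the depth lemma inductively to $0\to\Syz_A^{i+1}(A/I)\to F_i\to\Syz_A^i(A/I)\to 0$, using $\depth A=d$. For the remaining two conditions (freeness of the $A/I$-quotient and the multiplicity equality), the idea is to reduce modulo $Q$: setting $\bar A=A/Q$ and $\bar I=I/Q$, the relation $I^2=QI$ forces $\bar I^2=0$, while the freeness of $I/I^2$ over $A/I$ translates to $\bar I\cong (A/I)^{n-d}$ as $\bar A$-modules (where $n=\mu_A(I)$). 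This makes the minimal $\bar A$-free resolution of $A/I$ transparent: every differential has entries in $\bar I$, so each $\bar A$-syzygy is annihilated by $\bar I$ and is $A/I$-free. Lifting this structure back along the regular $A$-sequence $Q$ (via an explicit combination of the $\bar A$-resolution with the Koszul complex on $Q$, or via the standard change-of-rings spectral sequence) shows both that $\Syz_A^i(A/I)/I\Syz_A^i(A/I)$ is $A/I$-free and that $I\cdot\Syz_A^i(A/I)=Q\cdot\Syz_A^i(A/I)$. The multiplicity equality required by Definition 1.2 is then automatic from $e_I^0(M)=e_Q^0(M)=\ell_A(M/QM)=\ell_A(M/IM)$ for $M=\Syz_A^i(A/I)$.

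For $(2)\Rightarrow(1)$, I specialize to $i=d$. The Ulrich property of $\Syz_A^d(A/I)$ supplies a parameter reduction $Q$ of $I$ with $I\cdot\Syz_A^d(A/I)=Q\cdot\Syz_A^d(A/I)$ and $\Syz_A^d(A/I)/I\Syz_A^d(A/I)$ free over $A/I$. Descending through the short exact sequences $0\to\Syz_A^{j+1}(A/I)\to F_j\to\Syz_A^j(A/I)\to 0$ for $j=d-1,d-2,\ldots,1,0$, the condition $I\cdot\Syz^d=Q\cdot\Syz^d$ translates, via the minimality of the resolution (entries of differentials lying in $\fkm$), into the equality $I^2=QI$; meanwhile the freeness of $\Syz^d/I\Syz^d$ over $A/I$ propagates down to the freeness of $I/I^2$ over $A/I$.

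The main obstacle is the lifting argument in the forward direction: verifying that the combinatorially transparent structure of the minimal $\bar A$-free resolution of $A/I$ really does propagate under $A\to\bar A$ so as to yield simultaneously the free $A/I$-quotient and the equality $I\Syz_A^i(A/I)=Q\Syz_A^i(A/I)$ for every $i\ge d$. A secondary challenge is the descent direction, where one must carefully trace the Ulrich property of a single high syzygy back to the defining relations of $I$; here the minimality of the resolution (and the fact that $Q$ is generated by a regular sequence) is essential to prevent the differentials from mixing up the $A/I$-free structure.
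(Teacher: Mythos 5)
First, note that the paper does not actually prove Theorem \ref{Syz}: it is quoted from \cite[Theorem 4.1]{GOTWY}, so there is no internal proof to measure your argument against. Judged on its own terms, your proposal has a genuine gap in each direction. In the forward direction, the entire content is concentrated in the sentence beginning ``Lifting this structure back along the regular $A$-sequence $Q$,'' and that step is asserted rather than proved. The change-of-rings spectral sequence computes $\Tor$/$\Ext$, not the syzygy modules themselves; and the complex obtained by combining the minimal $\bar A$-resolution with the Koszul complex on $Q$ is an iterated mapping cone whose comparison maps are not automatically minimal, so even its Betti numbers --- let alone the module-theoretic statements $I\Syz_A^i(A/I)=Q\Syz_A^i(A/I)$ and freeness of $\Syz_A^i(A/I)/I\Syz_A^i(A/I)$ --- do not come for free. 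A route that actually closes is: establish the base case $i=d$ directly (e.g.\ by tensoring the truncated minimal resolution with $A/Q$ and using that $\Tor_j^A(A/I,A/Q)\cong(A/I)^{\binom{d}{j}}$ since $Q$ kills $A/I$, which yields $\Syz_A^d(A/I)/Q\Syz_A^d(A/I)\cong(I/Q)^{\beta_{d-1}}\oplus(A/I)^{\binom{d}{d}\cdot(\cdots)}$-type identifications), and then induct on $i$ using Theorem \ref{Dual}(1), which the paper hands you as a black box and which you never invoke.

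The reverse direction is where the proposal actually fails. Specializing to $i=d$ is not legitimate: for $d=0$ one has $\Syz_A^0(A/I)=A/I$, which satisfies all three conditions of Definition \ref{Umodule} for \emph{every} stable $\fkm$-primary ideal $I$ (since $I\cdot(A/I)=0$), yet $I$ need not be Ulrich --- the paper's own Example \ref{ex-0dimhyp} is exactly such an instance, and there the failure of (1) is only detected by $\Syz_A^1(A/I)=I$, whose reduction $I/I^2$ is not $A/I$-free. So the hypothesis ``for all $i\ge d$'' cannot be collapsed to $i=d$. Independently of this, the proposed ``descent'' has no mechanism: from $0\to\Syz_A^{j+1}(A/I)\to F_j\to\Syz_A^j(A/I)\to 0$ one can propagate information \emph{up} the resolution (the kernel is determined by the cokernel), but knowing that $I\Syz_A^{j+1}=Q\Syz_A^{j+1}$ and that $\Syz_A^{j+1}/I\Syz_A^{j+1}$ is free says essentially nothing about $\Syz_A^j$, since $\Syz_A^j\otimes_AA/I$ sits in an exact sequence $\Syz_A^{j+1}\otimes A/I\to F_j\otimes A/I\to\Syz_A^j\otimes A/I\to 0$ whose leftmost map is uncontrolled. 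You would need to argue the converse implication by a different route (for instance via the Betti-number recursion and the structure of $\Tor^A_\bullet(A/I,A/Q)$ forced by the hypothesis for all $i\ge d$), not by walking back down the minimal resolution.
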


Note that there exists a non-Ulrich ideal $I$
 so that $\Syz_A^i(A/I)$ is an Ulrich $A$-module 
with respect to $I$; see e.g. Examples \ref{ex-hyp}, \ref{ex-0dimhyp}.

\par 
On the other hand, we can construct 
new Ulrich modules from a given Ulrich module 
by the following theorem.

\begin{thm}[\textrm{See also \cite[Lemma 4.2, Theorem 5.1]{GOTWY}}] \label{Dual}
Suppose that $A$ is a Cohen--Macaulay local ring of dimension $d$ which 
admits a canonical module $K_A$. 
Assume that $I$ is an Ulrich ideal with $\mu(I)> d$ and 
$M$ is an Ulrich $A$-module with respect to $I$.  
Then 
\begin{enumerate}
 \item[$(1)$] $\Syz_A^1(M)$ is an Ulrich $A$-module with respect to $I$. 
 \item[$(2)$] $M^{\vee}=\Hom_A(M,K_A)$ is an Ulrich $A$-module with respect to $I$. 
\end{enumerate}
\end{thm}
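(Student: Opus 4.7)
For part (1), set $N := \Syz^1_A(M)$ and $n := \mu_A(M)$, so that $0 \to N \to A^n \to M \to 0$ is exact. The depth lemma gives that $N$ is MCM. Additivity of $\e_I^0(-)$ on dimension-$d$ modules along this sequence, combined with Proposition~\ref{Uchar}(1) for $I$ and the Ulrich hypothesis on $M$, yields $\e_I^0(N) = n(\mu_A(I)-d)\ell_A(A/I)$. Tensoring with $A/I$ and using that the induced surjection $(A/I)^n \twoheadrightarrow M/IM \cong (A/I)^n$ between $A/I$-free modules of equal rank is automatically an isomorphism, we obtain $N/IN \cong \Tor_1^A(A/I, M)$.

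To identify $\Tor_1^A(A/I, M)$, first observe that the Ulrich property forces $IM = QM$ for the minimal reduction $Q = (a_1,\ldots,a_d)$ of $I$, since $\e_I^0(M) = \e_Q^0(M) = \ell_A(M/QM)$ must match $\ell_A(M/IM)$. Next, a short computation using $I^2 = QI$ and the regular-sequence property of $a_1, \ldots, a_d$ shows that $I/Q$ is $A/I$-free of rank $\mu_A(I) - d$. Now apply $-\otimes_A M$ to the short exact sequence $0 \to I/Q \to A/Q \to A/I \to 0$: by Koszul acyclicity $\Tor_1^A(A/Q, M) = 0$ (since $Q$ is an $M$-regular sequence), and the map $M/QM \to M/IM$ is an isomorphism (by $IM = QM$), so the six-term sequence collapses to $\Tor_1^A(A/I, M) \cong (I/Q) \otimes_A M \cong (A/I)^{n(\mu_A(I)-d)}$, exactly the $A/I$-free module of the correct rank.

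For part (2), $M^\vee$ is MCM by the standard duality for MCM modules over a CM local ring with canonical module, and $\e_I^0(M^\vee) = \e_I^0(M) = n\ell_A(A/I)$ since ranks at minimal primes are preserved under $(-)^\vee$. Because $a_1, \ldots, a_d$ is regular on both $M$ and $K_A$, Hom-base-change gives $M^\vee/QM^\vee \cong \Hom_{A/Q}(M/QM, K_{A/Q})$; using $M/QM \cong (A/I)^n$ and the standard identification $\Hom_{A/Q}(A/I, K_{A/Q}) = K_{A/I}$ produces $M^\vee/QM^\vee \cong K_{A/I}^n$. Since $K_{A/I}$ is annihilated by $I/Q$, we deduce $IM^\vee = QM^\vee$ and hence $M^\vee/IM^\vee \cong K_{A/I}^n$ as $A/I$-modules. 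The main obstacle is then to identify $K_{A/I}$ with $A/I$, i.e.\ to show that $A/I$ is Gorenstein: in the Gorenstein case this is Proposition~\ref{Uchar}(2)(c), whereas for a general CM ring with canonical module the Gorensteinness of $A/I$ must be extracted directly from the Ulrich relations $I^2 = QI$ and $I/I^2 \cong (A/I)^{\mu_A(I)}$, for instance via a Koszul-complex computation of $\Ext^d_A(A/I, K_A)$ using the reduction $Q$.
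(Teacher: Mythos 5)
Note first that the paper does not actually prove Theorem \ref{Dual}; it quotes the result from \cite[Lemma 4.2, Theorem 5.1]{GOTWY}, so your argument has to stand on its own. Your part (1) does: the chain $N/IN\cong\Tor_1^A(A/I,M)\cong(I/Q)\otimes_AM\cong(A/I)^{\mu_A(M)(\mu_A(I)-d)}$, combined with additivity of $\e_I^0(-)$ along $0\to N\to A^{\mu_A(M)}\to M\to 0$ and Proposition \ref{Uchar}(1), is exactly the standard route, and the one ingredient you leave as ``a short computation'' --- that $I/Q$ is $A/I$-free of rank $\mu_A(I)-d$ --- is indeed a formal consequence of $I/I^2$ being $A/I$-free together with the fact that $a_1,\dots,a_d$ form part of a minimal generating system of $I$ (this is part of \cite[Lemma 2.3]{GOTWY}). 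The implicit point that $N\neq 0$ follows from $\mu_A(I)>d$, so (1) is complete.

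Part (2) has a genuine gap at the final step, and it cannot be closed by the route you propose. Your computation $M^\vee/IM^\vee\cong K_{A/I}^{\mu_A(M)}$ is correct, and it shows that $M^\vee$ is Ulrich with respect to $I$ \emph{if and only if} $K_{A/I}$ is $A/I$-free, i.e.\ $A/I$ is Gorenstein. But Gorensteinness of $A/I$ is \emph{not} a consequence of the Ulrich relations $I^2=QI$ and $I/I^2\cong(A/I)^{\mu_A(I)}$ for a general Cohen--Macaulay local ring with canonical module: the paper itself records (in the remark preceding Corollary \ref{Gor-Ul}, citing \cite[Example 2.2]{GOTWY}) that Ulrich ideals with non-Gorenstein quotient exist once one leaves the rational (or Gorenstein) setting, and Corollary \ref{Gor-Ul} has to work hard, using rationality, to recover Gorensteinness of $A/I$ there. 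So the suggested ``Koszul-complex computation of $\Ext_A^d(A/I,K_A)$'' aimed at proving $A/I$ Gorenstein unconditionally must fail; indeed, combined with Theorem \ref{Syz}, which supplies the Ulrich module $\Syz_A^d(A/I)$ for any nonparameter Ulrich ideal, such an example shows that assertion (2) is simply false without an extra hypothesis. The correct reading is that $A/I$ Gorenstein is a standing assumption of the cited source, and it is precisely what the paper verifies before invoking the theorem (in the proof of Theorem \ref{SPvsUlrich-RDP}, where $A$ is Gorenstein and Proposition \ref{Uchar}(2)(c) applies). With that hypothesis added, your argument for (2) is complete; without it, the last step is not a computation you have postponed but an impossibility.
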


\subsection{Two-dimensional rational singularities}

Throughout this subsection, 
let $A$ be a two-dimensional complete normal  local domain with unique
maximal ideal $\fkm$ containing an algebraically closed field $k$ of 
characteristic $0$, unless otherwise specified. 
(Many results in this paper hold true if $k$ is an algebraically closed field of 
positive characteristic. 
For simplicity, we assume that $k$ has characteristic $0$.)
Moreover, assume that $A$ has a \textit{rational singularity}, 
that is, there exists a resolution of singularities $\varphi: X \to \Spec A$ with 
$\H^1(X,\mathcal{O}_X)=0$; see \cite{Li1,Li2}.
A typical example of rational singularities is a quotient singularity. 
Moreover, (two-dimensional) Gorenstein rational singularities are called 
\textit{rational double points}, which are hypersurfaces of degree $2$.

\par \vspace{2mm}
{\bf Positive cycles, anti-nef cycles}. 
In what follows, let $\varphi \colon X \to \Spec A$ be a resolution of singularities 
with $E=\varphi^{-1}(\fkm)$ the exceptional divisor. 
Let $E=\cup_{i=1}^r E_i$ be the decomposition into irreducible components of $E$. 
In the set $\mathcal{C}=\sum_{i=1}^r \mathbb{Z} E_i$ of cycles supported on $E$, 
we define a partial order $\le$ as follows: for $Z$, $Z' \in \mathcal{C}$, 
$Z \le Z'$ if every coefficient of $E_i$ in $Z'-Z$ is nonnegative.
A cycle $Z=\sum_{i=1}^r a_i E_i$ 
is called \textit{positive}, denoted by $Z > 0$, 
if $0 \le Z$ and $Z \ne 0$. 

\par
On the other hand, a positive cycle $Z=\sum_{i=1}a_iE_i$ is said to be 
\textit{anti-nef} if $ZE_i \le 0$ for every $i=1,\ldots,r$, where 
$ZY$ denotes the intersection number of $Z$ and $Y$. 

\par \vspace{2mm} 
 {\bf Virtual genus}.
Since the intersection matrix $[E_i E_j]_{1 \le i,j \le r}$ 
is negative definite, there exists the unique $\mathbb{Q}$-divisor 
$\K_X$, the \textit{canonical divisor}, so that the following equation 
\[
 p_a(E_i):=\dfrac{E_i^2+\K_X E_i}{2} + 1 =0
\]    
holds for every $i=1,\ldots,r$, 
where $K_X$ is the canonical divisor of $X$. 
If $E_i^2=\K_X E_i=-1$, then $E_i \cong \mathbb{P}^1$ 
is called a \textit{$(-1)$-curve}. 
We say that $X$ is a \textit{minimal resolution} if $X$ contains no 
$(-1)$-curve.   
Such a resolution is
unique up to isomorphism. 
Moreover, for any positive cycle $Y > 0$, we put 
\[
 p_a(Y)=\dfrac{Y^2+K_XY}{2} + 1, 
\]
which is called the \textit{virtual genus} of $Y$. 
One can easily see that 
\[
 p_a(Y+Y')=p_a(Y)+p_a(Y')+YY'-1. 
\]
Furthermore, it is well-known that 
if $A$ is a rational singularity then $p_a(Z) \le 0$ holds true 
for every positive cycle $Z$ (\cite[Proposition 1]{Ar}). 

\par \vspace{2mm}
{\bf Dual graph}. In what follows, assume that 
$\varphi \colon X \to \Spec A$ is the minimal 
resolution of singularities with $\varphi^{-1}(\fkm) = \cup_{i=1}^r E_i$. 
Then the \textit{dual graph} $\Gamma$ of $\varphi$ is a simple graph 
with the vertex set
$\{E_i\}_{i=1}^r$ and the edge 
defined by the following:
\[
 \text{the edge $E_i -E_j$ exists (resp. does not exist) if and only if} 
\;\; E_iE_j =1 \;\text{(resp. $E_iE_j=0$)}.
\]
For instance, we have the following example:
\par \vspace{6mm}
\begin{picture}(400,30)(0,0)
    \thicklines
\put(90,10){$\Gamma=$}
  \put(125,0){{\tiny $E_2$}}
\put(130,12){\circle{8}}
\put(135,12){\line(1,0){20}}
  \put(155,0){{\tiny $E_3$}}
\put(160,12){\circle{8}}
\put(165,12){\line(1,0){20}}
  \put(183,0){{\tiny $E_4$}}
\put(190,12){\circle{8}}
\put(195,12){\line(1,0){20}}
  \put(215,0){{\tiny $E_5$}}
\put(220,12){\circle{8}}
\put(225,12){\line(1,0){20}}
  \put(245,0){{\tiny $E_6$}}
\put(250,12){\circle{8}}
\put(190,16){\line(0,1){15}}
  \put(195,26){{\tiny $E_1$}}
\put(190,34){\circle{8}}
\end{picture}
\[
(E_1E_4 = E_2E_3 = E_3 E_4= E_4E_5 = E_5E_6=1,\qquad E_iE_j=0 \text{ (others)}
\]
\par \vspace{3mm}
Let $Y=\sum_{j=1}^r a_jE_j$ be a positive cycle on $X$. 
Then we put $\Supp(Y)=\cup \{E_i \,|\, a_i > 0\}$, the \textit{support} of $Y$. 
Such a set is called connected if the induced subgraph is connected. 
Note that if $Y$ is positive and $p_a(Y)=0$ then $Y$ is connected.

\par \vspace{2mm}
{\bf Integrally closed ideal}.
Let $I$ be an $\fkm$-primary ideal of $A$. 
Then $I$ is said to be \textit{represented} on $X$ if 
the sheaf $I\mathcal{O}_X$ is invertible, that is, there 
exists an anti-nef cycle $Z$ with support in $E$ 
so that $I\mathcal{O}_X = \mathcal{O}_X(-Z)$ and 
$I=\H^0(X,\mathcal{O}_X(-Z))$.  
Then we denote such an ideal $I$ by $I=I_Z$.  
The product of two integrally closed ideals of $A$ 
is also integrally closed (\cite{Li1}). 
There is a one-to-one correspondence between 
the set of integrally closed $\fkm$-primary ideals of $A$ that are represented 
on $X$ and the set of anti-nef cycles 
$Z=\sum_{i=1}^{r} a_iE_i$ on $X$. 

\par \vspace{2mm}
{\bf Good ideal}.
Now we recall the notion of good ideals of rational singularities.

\begin{defn} \label{Ratgood}
Let $I$ be an $\fkm$-primary ideal of $A$. 
Then $I$ is called \textit{good} if $I$ is represented on 
the minimal resolution of singularities. 
\end{defn}

Notice that this definition is different from that of Definition \ref{Gorgood}. 
But for any $\fkm$-primary ideal $I$ of a two-dimensional Gorenstein 
rational singularity, $I$ is good in the sense of Definition \ref{Gorgood} 
if and only if it is good in the sense of Definition \ref{Ratgood};
see also \cite[Theorem 7.8]{GIW} or \cite{WY}).

\par 
The following fact is well-known. 

\begin{lem} \label{good-known}
Let $A$ be a two-dimensional $($not necessarily Gorenstein$)$ 
rational singularity, and $\varphi \colon X \to \Spec A$ denotes 
the minimal resolution of singularities. 
Then$:$ 
\begin{enumerate}
\item[$(1)$] The minimum element $($say, $Z_0$$)$
 among all non-zero anti-nef cycles on $X$ exists.
This cycle $Z_0$ is called the \textit{fundamental cycle} on $X$
which corresponds to the maximal ideal $\fkm$.  
In particular, $\fkm = \H^0(X,\mathcal{O}_X(-Z_0))$ is a good ideal. 
\item[$(2)$] 
If $I=\H^0(X,\mathcal{O}_X(-Z))$ and 
$J = \H^0(X,\mathcal{O}_X(-Z'))$ are good ideals of $A$, then 
$IJ=\H^0(X,\mathcal{O}_X(-(Z+Z')))$ is also a good ideal. 
\item[$(3)$] If $I=\H^0(X,\mathcal{O}_X(-Z))$, then $\e_I^0(A)=-Z^2$.
\end{enumerate} 
\end{lem}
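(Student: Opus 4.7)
The plan is to address the three parts in turn. Parts (1) and (3) rely on negative definiteness of the intersection matrix together with the rationality hypothesis on $A$, while (2) is an immediate consequence of Lipman's theorem on products of integrally closed ideals.

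For (1), I would first note that positive anti-nef cycles exist: for any $0\neq f\in\fkm$, write $\operatorname{div}(\varphi^*f)=Z_f+D$ with $Z_f$ supported on $E$ and $D$ effective and non-exceptional; then $(\operatorname{div}\varphi^*f)\cdot E_i=0$ forces $Z_f\cdot E_i=-D\cdot E_i\leq 0$, so $Z_f>0$ is anti-nef. Next I would verify closure under componentwise minimum: if $Z=\sum a_iE_i$ and $Z'=\sum a_i'E_i$ are positive anti-nef and $W=\sum\min(a_i,a_i')E_i$, then for each $j$ with $\min(a_j,a_j')=a_j$,
\[
W\cdot E_j=a_jE_j^2+\sum_{i\neq j}\min(a_i,a_i')E_iE_j\leq a_jE_j^2+\sum_{i\neq j}a_iE_iE_j=Z\cdot E_j\leq 0,
\]
using $E_iE_j\geq 0$ for $i\neq j$. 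Hence a unique minimum $Z_0$ of the set of positive anti-nef cycles exists; moreover, connectedness of $E$ forces $Z_0$ to have strictly positive coefficient on every $E_i$ (otherwise some $E_j\not\subset\Supp(Z_0)$ adjacent to $\Supp(Z_0)$ would satisfy $Z_0\cdot E_j>0$). To identify $\fkm=\H^0(X,\mathcal{O}_X(-Z_0))$, the divisor argument gives $Z_f\geq Z_0$ for every $f\in\fkm$, so $\fkm\subseteq\H^0(X,\mathcal{O}_X(-Z_0))$; conversely, sections of $\mathcal{O}_X(-Z_0)$ vanish along $E=\varphi^{-1}(\fkm)$ and hence lie in $\fkm$.

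For (2), the sheaf-theoretic product yields $(IJ)\mathcal{O}_X=I\mathcal{O}_X\cdot J\mathcal{O}_X=\mathcal{O}_X(-(Z+Z'))$, so $IJ$ is represented on $X$. By Lipman's theorem on the preservation of integral closure under products on a rational singularity, $IJ$ is integrally closed and therefore equals $\H^0(X,(IJ)\mathcal{O}_X)=\H^0(X,\mathcal{O}_X(-(Z+Z')))$, so $IJ$ is good.

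For (3), I would use the exact sequence $0\to\mathcal{O}_X(-nZ)\to\mathcal{O}_X\to\mathcal{O}_{nZ}\to 0$ together with the higher direct image vanishing $\mathbf{R}^i\varphi_*\mathcal{O}_X(-nZ)=0$ for $i\geq 1$ (valid for anti-nef $nZ$ on a rational singularity) and affineness of $\Spec A$, yielding $\H^i(X,\mathcal{O}_X(-nZ))=0$ for $i\geq 1$. By part (2), $I^n=\H^0(X,\mathcal{O}_X(-nZ))$, and the long exact cohomology sequence then gives $\ell_A(A/I^n)=h^0(\mathcal{O}_{nZ})=\chi(\mathcal{O}_{nZ})$ (the last equality using $H^1(\mathcal{O}_{nZ})\hookrightarrow H^2(\mathcal{O}_X(-nZ))=0$). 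The Riemann--Roch formula
\[
\chi(\mathcal{O}_{nZ})=-\tfrac{1}{2}\bigl((nZ)^2+K_X\cdot nZ\bigr)=-\tfrac{n^2}{2}Z^2-\tfrac{n}{2}K_X\cdot Z
\]
then yields $\ell_A(A/I^n)=-\tfrac{n^2}{2}Z^2+O(n)$, and the standard identity $\e_I^0(A)=\lim_n 2\ell_A(A/I^n)/n^2$ gives $\e_I^0(A)=-Z^2$. The main obstacle I expect is the cohomological vanishing $\mathbf{R}^i\varphi_*\mathcal{O}_X(-nZ)=0$ for $i\geq 1$ and anti-nef $nZ$ on a rational singularity; this is the Artin--Lipman strengthening of the definition of rationality and is the deepest input, and once it is granted the remaining arguments follow from negative definiteness, connectedness of $E$, and Riemann--Roch for cycles on $X$.
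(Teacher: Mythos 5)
The paper offers no proof of this lemma at all: it is introduced with ``The following fact is well-known'' and implicitly referred back to Lipman \cite{Li1,Li2} and Artin \cite{Ar}, so there is no argument of the authors' to compare yours against. Your reconstruction follows the standard route from those sources, and parts (2) and (3) are correct as written: in (2) the invertibility of $(IJ)\mathcal{O}_X$ comes for free from the goodness of $I$ and $J$, and Lipman's product theorem supplies the $\H^0$ identity; in (3) you correctly isolate the vanishing $\H^i(X,\mathcal{O}_X(-nZ))=0$ for $i\ge 1$ as the essential input, and the Riemann--Roch computation and the limit formula for the multiplicity are the same calculation that underlies the paper's Lemma \ref{RR}.

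There is, however, one genuine gap, in part (1). By Definition \ref{Ratgood}, for $\fkm$ to be a good ideal it must be \emph{represented} on $X$, which requires \emph{both} $\fkm=\H^0(X,\mathcal{O}_X(-Z_0))$ and $\fkm\mathcal{O}_X=\mathcal{O}_X(-Z_0)$. You prove only the first identity; the second is equivalent to saying that $\mathcal{O}_X(-Z_0)$ is generated by the global sections coming from $\fkm$, and this is precisely where rationality enters part (1) --- it is Artin's theorem (extended by Lipman to all anti-nef cycles) that on a resolution of a rational surface singularity $\mathcal{O}_X(-Z)$ is globally generated for every anti-nef $Z$. Tellingly, nothing in your part (1) uses the hypothesis that $A$ is rational: the existence of $Z_0$ and the equality $\fkm=\H^0(X,\mathcal{O}_X(-Z_0))$ hold for an arbitrary normal surface singularity, whereas the invertibility of $\fkm\mathcal{O}_X$ (and hence the conclusion that $\fkm$ is good) can fail without rationality. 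You should either cite the Artin--Lipman global generation theorem explicitly at this point, as you did for the cohomology vanishing in part (3), or supply the standard vanishing argument ($\H^1(X,\mathcal{O}_X(-Z_0-E_i))=0$ for each $i$, which lets one lift generators) that derives it. The rest of part (1) --- existence of anti-nef cycles via divisors of functions, closure under componentwise minimum, and full support via connectedness of $E$ --- is correct.
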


\par 
The colength $\ell_A(A/I)$ can also be determined 
by the anti-nef cycle $Z$; see the
Riemann--Roch formula (Lemma \ref{RR}).

\section{Weakly special Cohen--Macaulay modules over Gorenstein local domains} 
\label{Wsp}

Throughout this section, let $A$ be a Gorenstein local domain 
and $I \subset A$ a nonparameter $\fkm$-primary ideal, 
unless otherwise specified. 
In this section, we introduce the notion of 
weakly special Cohen--Macaulay modules, 
which are closely related to Ulrich modules.

\begin{defn}[\textbf{Weakly special CM module, ideal}] \label{Wspdef}
Let $A$ be a Cohen--Macaulay local domain, and 
let $M$ be an maximal Cohen--Macaulay $A$-module. 
If $M$ satisfies $\mu_A(M)=2 \cdot \rank_A M$ and $M/IM$ is $A/I$-free, then 
$M$ is called a 
\textit{weakly special Cohen--Macaulay $A$-module with respect to} $I$. 

\par
Suppose that $I \subset A$ is a stable ideal. 
If there exists a weakly special Cohen--Macaulay $A$-module 
with respect to $I$, then 
$I$ is called a \textit{weakly special ideal} of $A$.  
\end{defn}

\par
Now suppose that $A$ is a Gorenstein local ring. 
Let $I \subset A$ be a stable $\fkm$-primary ideal with minimal reduction $Q$.   
Then as $I \subseteq Q:I$, we have $I/Q \subseteq (Q:I)/Q \cong K_{A/I}$. 
Hence 
\[
 e_I^0(A) = \ell_A(A/Q)  \le \ell_A(A/I) + \ell(K_{A/I})= 2 \cdot \ell_A(A/I),
\]
where the last equality follows from the Matlis duality theorem. 
Note that equality holds if and only if $I$ is a good ideal. 
\par 
The following theorem is the main result in this section. 

\begin{thm} \label{UvsWsp}
Suppose that $A$ is a Gorenstein local domain and $I$ is a stable ideal of $A$. 
Let $M$ be a maximal Cohen--Macaulay $A$-module. 
Then the following condition are equivalent$:$
\begin{enumerate}
 \item[$(1)$] $M$ is an Ulrich $A$-module with respect to $I$, 
and $I$ is a good ideal. 
 \item[$(2)$] $M$ is a weakly special Cohen--Macaulay 
$A$-module with respect to $I$. 
\end{enumerate}
\end{thm}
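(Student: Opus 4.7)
The plan is to deduce the equivalence from a single chain of inequalities sandwiching $e_I^0(M)$. Set $r = \rank_A M$, $s = \ell_A(A/I)$, and fix a minimal reduction $Q$ of $I$ with $I^2 = QI$. I would collect four standard ingredients. First, the associativity formula for multiplicities, together with the domain hypothesis on $A$ and the maximal Cohen--Macaulayness of $M$, gives $e_I^0(M) = r \cdot e_I^0(A)$. Second, the computation displayed in the paragraph immediately preceding the theorem yields $e_I^0(A) \le 2s$, with equality precisely when $I = Q:I$, i.e.\ when $I$ is good. Third, since $Q$ is generated by a regular sequence on the MCM module $M$ and is a reduction of $I$, one has $e_I^0(M) = e_Q^0(M) = \ell_A(M/QM) \ge \ell_A(M/IM)$, the last inequality coming from $Q \subseteq I$. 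Fourth, whenever $M/IM$ is $A/I$-free (which is part of both (1) and (2)), $\ell_A(M/IM) = \mu_A(M) \cdot s$.

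For $(1)\Rightarrow(2)$, the Ulrich condition $e_I^0(M) = \ell_A(M/IM)$ combined with the first and fourth ingredients produces $r \cdot e_I^0(A) = \mu_A(M) \cdot s$; goodness then forces $e_I^0(A) = 2s$ via the second ingredient, so $\mu_A(M) = 2r$. Combined with the freeness of $M/IM$, this is precisely the weakly special condition. For $(2)\Rightarrow(1)$, the weakly special hypothesis yields $\ell_A(M/IM) = 2rs$ via the fourth ingredient, and the remaining three ingredients assemble into the sandwich
\[
2rs \;=\; \ell_A(M/IM) \;\le\; e_I^0(M) \;=\; r\cdot e_I^0(A) \;\le\; 2rs,
\]
which must collapse to equalities throughout. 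The middle equality is the Ulrich property for $M$, while the rightmost equality, via the second ingredient, is the goodness of $I$.

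The main substantive obstacle is not in this bookkeeping but is already packaged into the preparatory estimate $e_I^0(A) \le 2\ell_A(A/I)$: that bound rests on the Matlis-duality identification $(Q:I)/Q \cong K_{A/I}$ over the Artinian Gorenstein ring $A/Q$, which is exactly where both the Gorenstein hypothesis on $A$ and the stability of $I$ are used. Once that inequality is in hand, the equivalence asserted in the theorem is a clean collapse of standard multiplicity inequalities, with no further genuinely subtle step remaining.
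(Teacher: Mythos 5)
Your proof is correct and is essentially the paper's own argument: both rely on the preparatory bound $e_I^0(A)\le 2\,\ell_A(A/I)$ (with equality iff $I$ is good, via $(Q{:}I)/Q\cong K_{A/I}$), the associativity formula $e_I^0(M)=\rank_A M\cdot e_I^0(A)$, the freeness identity $\ell_A(M/IM)=\mu_A(M)\cdot\ell_A(A/I)$, and the inequality $\ell_A(M/IM)\le e_I^0(M)$, assembled into the same collapsing sandwich. The only cosmetic difference is that you justify $\ell_A(M/IM)\le e_I^0(M)$ explicitly through $e_I^0(M)=\ell_A(M/QM)$, where the paper simply invokes that $M$ is maximal Cohen--Macaulay.
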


\begin{proof}[Proof of Theorem $\ref{UvsWsp}$]
We may assume that $M/IM$ is $A/I$-free. 
Thus 
\[
\ell_A(M/IM) = \mu_A(M) \cdot \ell_A(A/I).
\]
\par 
$(1) \Longrightarrow (2):$
 By assumption we have
\[
 \ell_A(M/IM) 
= e_I^0(M) 
= e_I^0(A) \cdot \rank_A M 
= 2 \cdot \ell_A(A/I) \cdot \rank_A M,
\]
where the second equality follows from the associativity 
formula of multiplicities (e.g. \cite[Theorem 14.8]{Ma}).
It follows from the above two equalities that $\mu_A(M) = 2 \cdot \rank_A M$. 
Thus $M$ is a weakly special Cohen--Macaulay $A$-module with respect to $I$.  

\par 
$(2) \Longrightarrow (1):$ 
Since $M$ is a maximal Cohen--Macaulay $A$-module, we have 
\[
\ell_A(M/IM)  \le  e_I^0(M). 
\]
On the other hand, by the observation and the equality described as above,
we get 
\[
\ell_A(M/IM) 
= \mu_A(M) \cdot \ell_A(A/I) \\
 =  2 \cdot \rank_A M \cdot \ell_A(A/I) \\
 \ge  e_I^0(A) \cdot \rank_A M = e_I^0(M). 
\]
Therefore $\ell_A(M/IM) = e_I^0(M)$ and $e_I^0(A) = 2 \cdot \ell_A(A/I)$. 
That is, $M$ is an Ulrich $A$-module with respect to $I$ 
and $I$ is a good ideal. 
\end{proof}

\begin{cor}\label{Wspideal}
Suppose that $A$ is a Gorenstein local domain. 
If $I$ is an Ulrich ideal, then it is a weakly special ideal. 
\end{cor}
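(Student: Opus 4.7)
The plan is to chain together three results already stated in the excerpt, producing a weakly special module as a witness that $I$ is a weakly special ideal.

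First, I would note that since $A$ is Gorenstein and $I$ is Ulrich, Proposition \ref{Uchar}(2) tells us $I$ is automatically a good (stable) ideal, so the hypothesis of Theorem \ref{UvsWsp} on $I$ is already in force. Next, I need to exhibit a maximal Cohen--Macaulay $A$-module $M$ that is Ulrich with respect to $I$. For this I would apply Theorem \ref{Syz}: taking $M := \Syz_A^d(A/I)$ (with $d = \dim A$), this module is MCM and Ulrich with respect to $I$. (Since $I$ is a nonparameter $\fkm$-primary ideal, $A/I$ has infinite projective dimension, so this syzygy is nonzero.)

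Having both conditions of part (1) of Theorem \ref{UvsWsp} in hand---$M$ is Ulrich with respect to $I$ and $I$ is good---the implication $(1) \Longrightarrow (2)$ of that theorem delivers that $M$ is a weakly special Cohen--Macaulay $A$-module with respect to $I$. By Definition \ref{Wspdef}, the existence of such an $M$ (over the stable ideal $I$) is precisely the statement that $I$ is a weakly special ideal.

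There is essentially no obstacle here: the corollary is a direct packaging of Proposition \ref{Uchar}(2), Theorem \ref{Syz}, and the already-proved Theorem \ref{UvsWsp}. The only minor point to keep in mind is the domain hypothesis assumed in Definition \ref{Wspdef}, which is supplied in the statement, and the fact that a syzygy of $A/I$ in the non-parameter case is indeed a nonzero MCM module (so that $\rank_A M > 0$ and the equality $\mu_A(M) = 2\cdot\rank_A M$ is a meaningful numerical condition rather than $0 = 0$).
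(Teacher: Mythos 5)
Your proposal is correct and follows exactly the paper's own argument: Proposition \ref{Uchar} gives goodness, Theorem \ref{Syz} supplies $\Syz_A^{\dim A}(A/I)$ as an Ulrich module with respect to $I$, and Theorem \ref{UvsWsp} converts it into a weakly special Cohen--Macaulay module witnessing that $I$ is a weakly special ideal. The extra remarks on nonvanishing of the syzygy are a harmless refinement not present in (but consistent with) the paper's proof.
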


\begin{proof}
If $I$ is an Ulrich ideal, then it is a good ideal by Proposition \ref{Uchar}
and $M=\Syz_A^{\dim A}(A/I)$ is an Ulrich $A$-module 
with respect to $I$ by Theorem \ref{Syz}.
By Theorem \ref{UvsWsp}, $M$ is a weakly special Cohen--Macaulay $A$-module with respect to $I$. 
Hence $I$ is a weakly special ideal, as required. 
\end{proof}

\begin{prop} \label{UvsWspideal}
Suppose that $A$ is a hypersurface local domain. 
Then $I \subset A$ is an Ulrich ideal if and only if it is a weakly special ideal. 
\end{prop}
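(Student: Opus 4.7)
The forward direction will follow immediately from Corollary \ref{Wspideal}, since any hypersurface is Gorenstein, so all the work lies in the converse. To prove it, I would start by invoking Theorem \ref{UvsWsp} on the hypothesis that $I$ is weakly special: this already produces an Ulrich $A$-module $M$ with respect to $I$ and also guarantees that $I$ is a good ideal. By Proposition \ref{Uchar}(2) it then suffices to show that $A/I$ is Gorenstein, equivalently that the canonical module $K_{A/I}$ is cyclic as an $A/I$-module.

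Write $A = R/(f)$ with $R$ regular local. I first check that $M$ has no nonzero free summand: if $M = M_0 \oplus A^r$, then Ulrich-ness of $M$ combined with the standard bound $\ell_A(M_0/IM_0) \le e_I^0(M_0)$ applied to the MCM module $M_0$ forces $r\cdot\ell_A(A/I)\le 0$, hence $r = 0$. Consequently the 2-periodicity of matrix factorizations gives $\Syz_A^2 M \cong M$. Setting $n = \mu_A(M)$ and picking a minimal reduction $Q$ of $I$, the chain
\[
\ell_A(M/IM)=n\cdot\ell_A(A/I)=\rank_A M\cdot e_I^0(A)=e_Q^0(M)=\ell_A(M/QM)
\]
yields $IM = QM$.

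The heart of the argument is a double description of $\Syz_A^1 M/I\Syz_A^1 M$. On one hand, tensoring $0\to I/Q\to A/Q\to A/I\to 0$ with $M$, using $\Tor_1^A(A/Q,M) = 0$ (since $Q$ is $M$-regular) and the isomorphism $I/Q\cong K_{A/I}$ available for any good ideal of a Gorenstein ring, I would obtain $\Tor_1^A(A/I,M)\cong (M/IM)\otimes_{A/I}(I/Q)\cong K_{A/I}^n$; then reducing $0\to \Syz_A^1 M\to A^n\to M\to 0$ modulo $I$ identifies this Tor with $\Syz_A^1 M/I\Syz_A^1 M$. On the other hand, 2-periodicity supplies a second sequence $0\to M\to A^n\to \Syz_A^1 M\to 0$, and tensoring it with $A/I$ produces a surjection $(A/I)^n\twoheadrightarrow K_{A/I}^n$. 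Hence $\mu_{A/I}(K_{A/I})\le 1$, and comparing with $\ell_A(K_{A/I}) = \ell_A(A/I)$ from Matlis duality forces $K_{A/I}\cong A/I$.

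The step I expect to require most care is the identification $\Tor_1^A(A/I,M)\cong K_{A/I}^n$, which simultaneously exploits the Ulrich property of $M$ (to pass from $M/IM$ to $M/QM$ via $IM = QM$) and the goodness of $I$ in the Gorenstein sense (to pass from $I/Q$ to $K_{A/I}$). The hypersurface hypothesis enters only through the periodicity $\Syz_A^2 M\cong M$, which is what supplies the second short exact sequence indispensable for deducing cyclicity of $K_{A/I}$.
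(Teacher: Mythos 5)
Your proof is correct, and its engine is the same as the paper's: the square (matrix-factorization) presentation forced by the hypersurface hypothesis, combined with the freeness of $M/IM$ over $A/I$, bounds the number of generators of $I/Q\cong K_{A/I}$ by one. The difference is in execution. The paper reaches the conclusion in a few lines by aiming at criterion (b) of Proposition~\ref{Uchar}(2): it reduces the minimal presentation $A^{\mu}\to A^{\mu}\to M\to 0$ modulo $Q$, notes that $M/QM=M/IM\cong(A/I)^{\mu}$ so that the kernel of $(A/Q)^{\mu}\to M/QM$ is $(I/Q)^{\mu}$, and concludes from the induced surjection $(A/Q)^{\mu}\twoheadrightarrow(I/Q)^{\mu}$ that $\mu_A(I/Q)\le1$, hence $\mu_A(I)=d+1$. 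You aim instead at criterion (c), identifying $\Syz_A^1M/I\Syz_A^1M$ with $K_{A/I}^{n}$ via $\Tor_1^A(A/I,M)$ and then using the second half of the matrix factorization to surject $(A/I)^n$ onto it, so that $K_{A/I}$ is cyclic and $A/I$ is Gorenstein. Since $I/Q\cong K_{A/I}$ for a good ideal of a Gorenstein ring, the two computations describe the very same module; your route costs an extra layer of Tor bookkeeping but makes the role of the canonical module explicit, and it also supplies two verifications the paper leaves tacit (that $M$ has no free summand and that $IM=QM$). The forward implication is handled identically in both, via Corollary~\ref{Wspideal}.
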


\begin{proof}
It suffices to prove the `if' part.  
Now suppose that $I$ is a weakly special ideal. 
Take a weakly special Cohen--Macaulay $A$-module $M$ with respect to $I$.
By Theorem \ref{UvsWsp}, $M$ is an Ulrich $A$-module with respect to $I$. 
Since $A$ is a hypersurface and $M$ is a maximal Cohen--Macaulay $A$-module without 
free summands, we have a minimal free presentation  
$A^{\mu} \to A^{\mu} \to M \to 0$, which induces an exact sequence
\[
(A/Q)^{\mu} \to (A/Q)^{\mu} \xrightarrow{f} M/QM \to 0.
\]
As $M/QM=M/IM$ is $A/I$-free, we have $M/QM\cong(A/I)^{\mu}$.
It is easy to observe that the kernel of $f$ is isomorphic to $(I/Q)^{\mu}$.
Hence there is a surjection $(A/Q)^{\mu} \to(I/Q)^{\mu}$, which shows 
$\mu_A(I/Q)\le1$.
Thus $\mu_A(I)=d+1$, and hence
Proposition \ref{Uchar} implies that $I$ is an Ulrich ideal.
\end{proof}

\par 
The following corollary gives a partial answer to Problem \ref{Prob-nat}. 

\begin{cor} \label{UidealvsMod}
Suppose that $A$ is a hypersurface local domain, and $I \subset A$ is a good ideal. 
If there exists an Ulrich $A$-module with respect to $I$, then 
$I$ is an Ulrich ideal. 
\end{cor}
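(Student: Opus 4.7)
The statement is essentially a direct consequence of the two results immediately preceding it, so the plan is to chain them together. The hypotheses give an Ulrich $A$-module $M$ with respect to $I$, together with the assumption that $I$ is a good ideal of the hypersurface $A$ (which is in particular a Gorenstein local domain).

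First I would invoke Theorem \ref{UvsWsp}: since $A$ is Gorenstein, $I$ is stable (being good, in the sense of Definition \ref{Gorgood}), $I$ is good, and $M$ is Ulrich with respect to $I$, implication $(1)\Rightarrow(2)$ of that theorem tells us that $M$ is a weakly special Cohen--Macaulay $A$-module with respect to $I$. In particular, by Definition \ref{Wspdef}, this exhibits $I$ as a weakly special ideal of $A$ (the stability condition required in the definition is already built into goodness).

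Next I would apply Proposition \ref{UvsWspideal}, which asserts that for a hypersurface local domain the classes of Ulrich ideals and weakly special ideals coincide. Combining with the previous step yields that $I$ is an Ulrich ideal, as desired.

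The argument is short and should encounter no real obstacle: the substantive work has already been done in Theorem \ref{UvsWsp} (which converts between the Ulrich module / good ideal picture and the weakly special picture) and in Proposition \ref{UvsWspideal} (where the hypersurface hypothesis is used via the $2$-periodic minimal free resolution to force $\mu_A(I/Q)\le 1$, hence $\mu_A(I)=d+1$, and thus Ulrichness by Proposition \ref{Uchar}). The only thing to be mindful of is checking that the definitional prerequisites line up, i.e.\ that a good ideal in a Gorenstein ring is automatically stable, so that the term ``weakly special ideal'' makes sense when applied to $I$.
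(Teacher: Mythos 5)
Your argument is correct and is exactly the route the paper takes: its proof of this corollary is the one-line statement that the assertion follows from Theorem \ref{UvsWsp} and Proposition \ref{UvsWspideal}. Your additional check that goodness entails stability (so that Theorem \ref{UvsWsp} and the notion of weakly special ideal apply) is a sensible piece of due diligence that the paper leaves implicit.
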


\begin{proof}
The assertion follows from Theorem \ref{UvsWsp} and Proposition \ref{UvsWspideal}.
\end{proof}

\begin{ques} \label{Q-1}
Let $A$ be a Gorenstein local domain and $I \subset A$ be a stable ideal. 
Suppose that there exists an Ulrich $A$-module $M$ with respect to $I$. 
Is then $I$ an Ulrich ideal (especially, a good ideal)?
\end{ques}

\par 
The next examples shows that we cannot relax the assumption 
that $I$ is stable in Question \ref{Q-1}.

\begin{ex} \label{ex-hyp}
Let $k$ be a field and let $e \ge 3$ be an integer. 
Set $A=k[[t^e,t^{e+1}]]$ and $M=(t^e,t^{e+1})^{e-1}$. 
Then $A$ is a hypersurface local domain and $M$ is an Ulrich $A$-module 
with respect to $\fkm=(t^e,t^{e+1})$, the maximal ideal of $A$.
But $\fkm$ is \textit{not} stable. 
\end{ex}

\par 
The next example shows that we cannot relax the assumption 
that $A$ is a local domain (or $\dim A \ge 1$) 
in Question \ref{Q-1}. 

\begin{ex} \label{ex-0dimhyp}
Let $k$ be a field, and let $a$, $e$ be integers with
$2a > e > a \ge 2$. 
Set $A=k[[t]]/(t^{e})$, and $I=(t^a)$. 
Then $I^2=0$ but $I \ne 0 \colon I=(t^{e-a})$. 
Hence $I$ is stable but not good. 
Then $t^{e-a}A \cong A/I$ is an Ulrich $A$-module with 
respect to $I$. 
\end{ex}

\section{Special Cohen--Macaulay modules over two-dimensional 
rational singularities} \label{Sp}

Throughout this section, let $(A,\fkm)$ be a two-dimensional complete 
normal local domain with an algebraically closed residue field $k$ 
of chracteristic zero. 
Let $\varphi \colon X \to \Spec A$ be 
the \textit{minimal} resolution of singularities 
with $E=\varphi^{-1}(\fkm)$ the exceptional divisor. 
Let $E=\cup_{j=1}^r E_j$ be the decomposition into irreducible components of $E$. 
Let $I \subset A$ be an $\fkm$-primary ideal, and $Q$ a minimal reduction of $I$. 
For every maximal Cohen--Macaulay $A$-module $M$, we put 
$\widetilde{M} = \varphi^{*}(M)/\text{torsion}$. 

\par
First we recall the notion of special Cohen--Macaulay modules. 

\begin{thmdef}[\textbf{Special McKay correspondence due to Wunram}]
Assume that $A$ is a rational singularity, and 
let $\varphi \colon X \to \Spec A$ be as above. 
For every $i$, there exists a
unique indecomposable maximal Cohen--Macaulay $A$-module $M_i$ 
$($up to isomorphism$)$ with $H^1(\widetilde{M_i}^{\vee})=0$ so that 
\[
 c_1(\widetilde{M_i}) E_j = \delta_{ij} 
\]
for every $1 \le i,j \le r$ and $\rank_A M_i = n_i$, where  
$c_1(\widetilde{M})$ denotes the 1st Chern class of $\widetilde{M}$
and $Z_0$ denotes the fundamental cycle on $X$. 
\end{thmdef}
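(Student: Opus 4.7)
The statement is a form of Wunram's special McKay correspondence, so the plan is to derive it from Esnault's geometric correspondence together with Wunram's refinement for special sheaves. First, I would invoke Esnault's theorem: for a two-dimensional rational singularity with minimal resolution $\varphi \colon X \to \Spec A$, the assignment $M \mapsto \widetilde{M}= \varphi^{*}(M)/\mathrm{torsion}$ induces a bijection between isomorphism classes of indecomposable MCM $A$-modules (equivalently indecomposable reflexive modules, since $\dim A = 2$ and $A$ is normal) and isomorphism classes of indecomposable \emph{full} sheaves on $X$, with inverse $\mathcal{F} \mapsto \varphi_{*}\mathcal{F}$. Under this correspondence, the vanishing $H^1(\widetilde{M}^{\vee}) = 0$ cuts out precisely the \emph{special} full sheaves in Wunram's sense.

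Next, I would construct $M_i$ by producing a corresponding indecomposable special full sheaf $\mathcal{F}_i$ on $X$ with prescribed intersection numbers $c_1(\mathcal{F}_i) \cdot E_j = \delta_{ij}$. Since the intersection matrix $[E_k E_\ell]$ is negative definite, there is a unique rational divisor class $D_i$ supported on $E$ with $D_i \cdot E_j = \delta_{ij}$; a suitable integral representative yields a line bundle that one then converts into a full special sheaf via Wunram's inductive extension procedure, repeatedly forming non-split extensions $0 \to \mathcal{O}_X \to \mathcal{F}^{(k+1)} \to \mathcal{F}^{(k)} \to 0$ driven by nontrivial classes in $H^1$ until the specialness condition is achieved. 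The resulting $\mathcal{F}_i$ has the required first Chern class, and $M_i := \varphi_{*}\mathcal{F}_i$ is an indecomposable MCM $A$-module satisfying $\widetilde{M_i} = \mathcal{F}_i$ and $H^1(\widetilde{M_i}^{\vee})=0$.

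The rank formula $\rank_A M_i = n_i$ follows from Riemann--Roch on $X$ applied to the restriction $\mathcal{F}_i|_{Z_0}$ to the fundamental cycle. Using $p_a(Z_0) = 0$ (a consequence of rationality), the identity $c_1(\mathcal{F}_i) \cdot Z_0 = \sum_j n_j \delta_{ij} = n_i$, and the vanishing $H^1(\mathcal{F}_i|_{Z_0}) = 0$ coming from specialness together with $H^1(X,\mathcal{O}_X)=0$, the Euler characteristic computation forces $\rank \mathcal{F}_i = n_i$. For uniqueness, if $\mathcal{F}$ and $\mathcal{F}'$ are two indecomposable special full sheaves with $c_1(\mathcal{F}) \cdot E_j = c_1(\mathcal{F}') \cdot E_j = \delta_{ij}$ for every $j$, then specialness combined with Krull--Schmidt forces $\mathcal{F} \cong \mathcal{F}'$.

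The main obstacle in executing this plan is controlling the interplay between the Chern class, the rank, and the specialness vanishing during the inductive construction: the required extensions exist thanks to vanishings enabled by $H^1(X,\mathcal{O}_X) = 0$ and the rationality of the singularity, but showing that the procedure terminates at a sheaf of rank exactly $n_i$ while preserving indecomposability is the technical heart of Wunram's original argument \cite{Wu}, which I would cite rather than reproduce in detail here.
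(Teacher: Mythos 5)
The paper offers no proof of this statement at all: it is presented as a Theorem--Definition imported verbatim from Wunram \cite{Wu}, so there is nothing internal to compare your argument against. Your sketch is a faithful outline of Wunram's actual strategy (Esnault's correspondence between MCM modules and full sheaves, prescribing $c_1$ via negative definiteness of the intersection matrix, Riemann--Roch on the fundamental cycle for the rank, and the $H^1$-vanishing characterization of specialness), and it appropriately defers the technical core --- the inductive construction, its termination at rank $n_i$, and the injectivity of $\mathcal{F}\mapsto c_1(\mathcal{F})$ on indecomposable special sheaves that underlies uniqueness --- to \cite{Wu}, which is exactly how the paper itself treats the result.
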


Based upon this theorem, we define a (nontrivial) 
special Cohen--Macaulay $A$-module, which has been defined 
in more general settings. 

\begin{defn}[\textbf{Special CM module}] \label{SpCMdef}
Suppose that $A$ is a two-dimensional rational singularity. 
Let $M$ be a maximal Cohen--Macaulay $A$-module. 
Then $M$ is called a \textit{special Cohen--Macaulay $A$-module} 
if $M$ is isomorphic to a finite direct sum of 
$M_1,\ldots, M_r$. 
\end{defn}

\begin{rem} \label{Special-Wunram}
Let $K_A$ denote the canonical module of $A$. 
A maximal Cohen--Macaulay $A$-module $M$ 
is said to be a \textit{special Cohen--Macaulay $A$-module} 
if $M \otimes_A K_A/\text{torsion}$ is Cohen--Macaulay. 
This condition is equivalent to  $\Ext_A^1(M,A)=0$; see \cite{Wu}. 
In particular, any free $A$-module 
or any maximal Cohen--Macaulay module over a 
Gorenstein local domain $A$
is a special Cohen--Macaulay $A$-module in this sense. 
But in this paper, we use the notion of special 
Cohen--Macaulay modules for two-dimensional rational singularities only. 
\end{rem}

\par
Iyama--Wemyss \cite{IW} proved the following characterization of special 
Cohen--Macaulay modules. 

\begin{prop}[\textrm{cf. \cite[Theorem 3.6]{IW}}]
Suppose that $A$ is a two-dimensional rational singularity. 
Let $M$ be a maximal Cohen--Macaulay $A$-module without free summands. 
Then $M$ is a special Cohen--Macaulay $A$-module if and only if 
$\Syz_A^1(M) \cong M^{*}$, where $M^{*} = \Hom_A(M,A)$. 
\end{prop}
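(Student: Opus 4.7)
The plan is to dualize the minimal free presentation of $M$ and exploit that over the 2-dimensional normal Cohen--Macaulay local ring $A$ every MCM module is reflexive. Take the minimal free presentation $0 \to \Omega \to F \to M \to 0$, with $F = A^{\mu_A(M)}$ and $\Omega = \Syz_A^1 M$; since $M$ has no free summand, neither does $\Omega$, and $\Omega \subset \fkm F$. All three modules are MCM, hence reflexive. Applying $\Hom_A(-, A)$ produces the four-term exact sequence $0 \to M^* \to F \to \Omega^* \to \Ext_A^1(M, A) \to 0$.

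For the ($\Leftarrow$) direction, assume $\Omega \cong M^*$. By reflexivity $\Omega^* \cong M^{**} \cong M$, so the four-term sequence becomes $0 \to \Omega \to F \to M \to \Ext_A^1(M, A) \to 0$, in which the inclusion $\Omega \hookrightarrow F$ and the surjection $F \to M$ agree with the ones in the original SES (by naturality of the canonical iso $N \cong N^{**}$ on reflexive $N$). Exactness at $M$ in the original SES then forces $\Ext_A^1(M, A) = 0$.

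For the ($\Rightarrow$) direction, assume $\Ext_A^1(M, A) = 0$, so that the dualized sequence collapses to a short exact sequence $0 \to M^* \to F \to \Omega^* \to 0$. One checks $M^*$ has no free summand (otherwise $M \cong M^{**}$ would inherit one), hence $M^* \subset \fkm F$ and the SES is the minimal free presentation of $\Omega^*$; in particular $\Syz_A^1 \Omega^* \cong M^*$. Dualizing this SES once more and using reflexivity yields $0 \to \Omega \to F \to M \to \Ext_A^1(\Omega^*, A) \to 0$; the same naturality comparison with the original SES forces $\Ext_A^1(\Omega^*, A) = 0$, so $\Omega^*$ is also special. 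The remaining task is to establish the isomorphism $\Omega^* \cong M$, for then first-syzygy functoriality gives $M^* \cong \Syz_A^1 \Omega^* \cong \Syz_A^1 M = \Omega$.

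The main obstacle is producing $\Omega^* \cong M$ concretely. Although the dualization procedure yields a highly symmetric setup --- both $M$ and $\Omega^*$ are special, each appearing as the cokernel of the dualized minimal presentation of the other --- the identification $F/M^* \cong F/\Omega$ is not purely homological: the submodules $M^*$ and $\Omega$ of $F$ differ in general, and matching them abstractly uses the geometry of the rational singularity. The cleanest route passes to the minimal resolution $\varphi \colon X \to \Spec A$, under which MCMs correspond to full reflexive $\calO_X$-sheaves. Specialness is characterized by a Chern-class condition against the exceptional curves (as recalled in Theorem--Definition 4.1); under this condition the sheaves $\widetilde{\Omega^*}$ and $\widetilde{M}$ on $X$ carry matching Chern data and hence are isomorphic as $\calO_X$-modules, and the isomorphism descends to $A$-modules via the rational singularity properties $\varphi_\ast \calO_X = A$ and $R^1 \varphi_\ast \calO_X = 0$.
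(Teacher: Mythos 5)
First, a point of comparison: the paper does not prove this proposition at all --- it is imported verbatim from Iyama--Wemyss \cite[Theorem 3.6]{IW} --- so your attempt has to be measured against that source rather than an in-paper argument. Your formal scaffolding is fine: the four-term sequence $0 \to M^{*} \to F^{*} \to \Omega^{*} \to \Ext_A^1(M,A) \to 0$ obtained by dualizing the minimal presentation, reflexivity of maximal Cohen--Macaulay modules over the two-dimensional normal domain $A$, the observation that $M^{*}$ has no free summands so that the resulting short exact sequence is again a minimal presentation, and the second dualization giving $\Ext_A^1(\Omega^{*},A)=0$ (there biduality genuinely is natural) are all correct. But each direction still contains a genuine gap.

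In the ($\Leftarrow$) direction, the assertion that after substituting $\Omega \cong M^{*}$ ``the inclusion and the surjection agree with the ones in the original SES by naturality of $N \cong N^{**}$'' is unfounded: naturality compares a map with its double dual, whereas you are transporting a once-dualized map along an \emph{abstract} isomorphism $\Omega \cong M^{*}$, which need not be induced by anything. What the four-term sequence actually yields is that $F$ surjects onto a finite-colength submodule of some module abstractly isomorphic to $M$, with kernel abstractly isomorphic to $\Omega$; nothing in that forces the cokernel $\Ext_A^1(M,A)$ to vanish, and the implication cannot be purely formal --- the numerical shadow of $\Omega \cong M^{*}$ is exactly $\mu_A(M)=2\rank_A M$, the weakly special condition, which the authors themselves expect not to imply specialness (Remark \ref{Converse}). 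In the ($\Rightarrow$) direction you correctly isolate the crux, namely $\Omega^{*} \cong M$, but the proposed resolution is only a gesture: you neither compute $c_1(\widetilde{\Omega^{*}})$ (note that $\varphi^{*}(-)/\mathrm{torsion}$ is not exact, so additivity of $c_1$ along the dualized sequence itself requires an argument), nor justify why equal rank and first Chern class force an isomorphism of full sheaves on the minimal resolution --- that uniqueness statement is precisely Wunram's special McKay correspondence restricted to special sheaves, i.e.\ the substantive content of \cite[Theorem 3.6]{IW}. As written, the proposal reduces the proposition to two unproved assertions of comparable depth to the proposition itself.
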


\begin{rem}
Suppose that $A$ is Gorenstein rational singularity, 
that is, $A$ is a rational double point. 
Then any maximal Cohen--Macaulay $A$-module is a finite direct sum of 
free modules and special Cohen--Macaulay $A$-modules. 
\end{rem}

As in the case of Ulrich modules, we define a special CM module with respect to an 
Ulrich ideal $I$.
\begin{defn}[\textbf{Special CM module w.r.t. $I$}]
 \label{SpCM-def}
Suppose that $A$ is a two-dimensional rational singularity. 
Let $M$ be a finitely generated $A$-module. 
Then $M$ is called a \textit{special Cohen--Macaulay $A$-module with respect to $I$} 
if the following conditions are satisfied:
\begin{enumerate}
 \item[$(1)$] $M$ is a special Cohen--Macaulay $A$-module, that is, $\Syz_A^1(M) \cong M^{*}$.
 \item[$(2)$]  $M/IM$ is $A/I$-free. 
\end{enumerate}
\end{defn}

\par 
Any special Cohen--Macaulay $A$-module is 
a weakly special Cohen--Macaulay $A$-module 
in the sense of \ref{Gorgood} but we believe that 
the converse is not true in general.  

\begin{lem} \label{WSpvsSp}
Suppose that $A$ is a two-dimensional rational singularity. 
Let $M$ be a maximal Cohen--Macaulay $A$-module. Then
\begin{enumerate}
 \item[$(1)$]  If $M$ is a special Cohen--Macaulay $A$-module with respect to $I$, 
 then it is a weakly special Cohen--Macaulay $A$-module with respect to $I$. 
 \item[$(2)$]  When $\rank_A M =1$, the converse of $(1)$ holds true. 
\end{enumerate}
\end{lem}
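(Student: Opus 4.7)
The plan is to treat the two parts separately: (1) reduces to a short rank computation and (2) reduces to an explicit fractional-ideal model.

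For (1), starting from the definition of ``special Cohen--Macaulay with respect to $I$''---namely $\Syz_A^1(M)\cong M^{\ast}$ together with the $A/I$-freeness of $M/IM$---I would localize the minimal free presentation
\[
0\to \Syz_A^1(M)\to A^{\mu_A(M)}\to M\to 0
\]
at the generic point to obtain $\rank_A \Syz_A^1(M)=\mu_A(M)-\rank_A M$. Since $A$ is a Noetherian domain one has $\rank_A M^{\ast}=\rank_A M$, so the assumed isomorphism forces $\mu_A(M)=2\,\rank_A M$. Combined with the already assumed freeness of $M/IM$, this is precisely the weakly special condition of Definition \ref{Wspdef}.

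For (2), I would exploit $\rank_A M=1$ to make both sides explicit. Over a two-dimensional normal Cohen--Macaulay local domain, a rank-one MCM module satisfies Serre's condition $(S_2)$, hence is reflexive, hence can be realized as a divisorial fractional ideal in $K:=\operatorname{Frac}(A)$. Because $\mu_A(M)=2$ rules out $M\cong A$, rescaling a minimal two-element generating set by a suitable element of $K$ lets me assume $M=A+Au\subset K$ for some $u\notin A$. With this model, the surjection $A^2\twoheadrightarrow M$, $(\alpha,\beta)\mapsto\alpha+\beta u$, has kernel $\{(\alpha,\beta):\alpha=-\beta u\}$, and the assignment $\beta\mapsto(-\beta u,\beta)$ identifies it with the $A$-submodule $\{\beta\in A:\beta u\in A\}$. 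On the other hand, identifying $M^{\ast}=\Hom_A(M,A)$ with the fractional-ideal quotient $A:_K M=A\cap u^{-1}A$ yields the same submodule. Thus $\Syz_A^1(M)\cong M^{\ast}$, and together with the already assumed $A/I$-freeness of $M/IM$, this is exactly the assertion that $M$ is special with respect to $I$.

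I do not anticipate a serious obstacle. The most delicate ingredient is the reduction $M\cong A+Au$, which rests on the (standard) fact that rank-one MCM modules over a two-dimensional normal local domain are reflexive and thus arise as divisorial fractional ideals; everything else is direct book-keeping with the minimal presentation and the hom--fractional-ideal dictionary.
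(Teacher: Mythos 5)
Your proposal is correct and takes essentially the same route as the paper: part (1) is the same rank count on the minimal presentation $0\to\Syz_A^1(M)\to A^{\mu_A(M)}\to M\to 0$ combined with $\rank_A M^{\ast}=\rank_A M$, and part (2) is the paper's explicit computation of the first syzygy of a two-generated rank-one (fractional) ideal, the only cosmetic difference being that the paper realizes $M$ as an integral ideal $J=(x,y)$ and identifies $\Syz_A^1(J)\cong(x)\colon y\cong J^{\ast}$ where you normalize to $A+Au$ and identify the kernel with $A:_K M$. No gaps.
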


\begin{proof}
(1) Suppose that $M$ is a special Cohen--Macaulay $A$-module. 
Then we have the following exact sequence:
\[
 0 \to M^{*} \to A^n \to M \to 0,
\]
where $n= \mu_A(M)$. 
This yields $\mu_A(M) = \rank_A M + \rank_A M^{*} = 2 \cdot \rank_A M$. 
\par \noindent  
(2) Take an ideal $J \subset A$ that is isomorphic to $M$. 
Then $\height J = 1$ and $A/J$ is Cohen--Macaulay. 
It suffices to show that $\Syz_A^1(J) \cong J^{*}$. 
\par 
As $\mu_A(J)=2 \cdot \rank_A J = 2$, we can write $J=(x,y)$. 
Then 
\[
\Syz_A^1(J) \cong \left\{
\left[\begin{array}{c} \alpha \\ \beta \end{array} \right] \in A^2 \,\bigg|\,
\alpha x + \beta y = 0
 \right\} \cong (x) \colon y \cong J^{*}.
\]
Hence $M \cong J$ is a special Cohen--Macaulay $A$-module with respect to $I$. 
\end{proof}

\begin{rem} \label{Converse}
Let $S=k[s,t]$ be a graded 
polynomial ring with two variables over an algebrically closed 
field of characteristic $0$ with $\deg(s)=\deg(t)=1$.  
Let $D$ be an invariant subring of $S$ by 
\[
 G= \bigg\langle \left(
\begin{array}{cc} \sqrt{-1} & 0 \\ 0 & -\sqrt{-1} \end{array} \right), 
\quad \left(\begin{array}{cc} 0 & -1 \\ 1 & 0 \end{array}\right) \bigg\rangle.
\]
Then $D$ is a two-dimensional rational singularity of type $(D_4)$, and 
it is isomorphic to the graded subring $k[x,y,z]$, where 
$x=s^4+t^4$, $y=s^2t^2$, and $z=st(s^4-t^4)$.  
\par
Let $A$ be the third Veronese subring of $D$, that is, 
$A = k[z,x^3,xy^2,y^3]$ is a rational triple point whose dual graph is 
given by the following:
\par \vspace{5mm}
\begin{picture}(200,35)(-40,0)
    \thicklines
  \put(55,18){{\tiny $1$}}
\put(60,12){\circle{8}}
\put(65,12){\line(1,0){18}}
  \put(83,18){{\tiny $1$}}
\put(90,12){\circle{12}}
\put(84,10){{\tiny $-3$}}
\put(97,12){\line(1,0){18}}
  \put(115,18){{\tiny $1$}}
\put(120,12){\circle{8}}
\put(90,18){\line(0,1){13}}
  \put(83,38){{\tiny $1$}}
\put(90,34){\circle{8}}
\end{picture}
\par \vspace{3mm}
In particular, all indecomposable 
special Cohen--Macaulay $A$-modules have rank $1$. 
\par
Now let $L$ be an indecomposable maximal Cohen--Macaulay 
$D$-module generated by $s$, $s^2t$, $t^3$ and $s(s^4-t^4)$. 
Put $M=\oplus_{k \in \mathbb{Z}} L_{3k+1}$. 
Then $M$ is a graded $A$-module of rank $2$. 
One can see that $M$ is generated by $s$, $s^2t^5$, $s^4t^3$ and $t^7$. 
In particular, $\widehat{M_{\fkm}}$ is a weakly special 
Cohen--Macaulay $\widehat{A_{\fkm}}$-module. 
We believe that $\widehat{M_{\fkm}}$ is indecomposable as 
$\widehat{A_{\fkm}}$-module. 
If it is true, then $\widehat{M_{\fkm}}$ is not special, as required. 
\par 
Michael Wemyss taught us that one could obtain from \cite[Example 1]{Wu} a weakly special Cohen--Macaulay $R$-module of rank $2$ which is not a special Cohen--Macaulay $R$-module. 
\end{rem}

\par 
Next, we introduce the notion of special ideals. 

\begin{defn}[\textbf{Special ideal}] \label{Spideal}
An $\fkm$-primary ideal $I \subset A$ is called a \textit{special ideal} 
if it is a good ideal (cf. Definition \ref{Ratgood}) 
and there exists a special Cohen--Macaulay $A$-module $M$ 
(equivalently, $M_j$ for some $j$) with respect to $I$. 
When this is the case, such a cycle $Z$ is called a \textit{special cycle}.  
\end{defn}

\par 
In the rest of this section, we give a characterization of special ideals 
in terms of cycles. 
Before doing that, we need the following lemma, which also plays an important role 
in Section \ref{Ulrich-nonGor}. 
\par 
Let $Z=\sum_{i=1}^r a_i E_i$ and $W = \sum_{i=1}^r b_i E_i$ be anti-nef cycles on $X$. 
Put $\inf(Z,W)= \sum_{i=1}^r \inf(a_i,b_i) E_i$, then one can easily see that 
$\inf(Z,W)$ is also an anti-nef cycle on $X$. 

\begin{lem} \label{FiltCycles}
Assume that $Z\ne Z_0$ is an anti-nef cycle on $X$. 
Then we can find the following anti-nef cycles $Z_1,\ldots,Z_s$ and 
positive cycles $Y_1,\ldots,Y_s$ so that 
$0 < Y_s \le Y_{s-1} \le \cdots \le Y_1 \le Z_0:$ 
\begin{equation}
\left\{
\begin{array}{rcl}
Z=Z_s &=& Z_{s-1} + Y_s, \\ 
Z_{s-1} & = & Z_{s-2} + Y_{s-1}, \\
&\vdots& \\
Z_2 & = & Z_1 + Y_2, \\
Z_1 & = & Z_0 + Y_1, 
\end{array}
\right.
\end{equation}
where $Z_0$ denotes the fundamental cycle on $X$. 
\end{lem}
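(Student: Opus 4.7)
The plan is to proceed by induction on the non-negative integer $|Z-Z_0| := \sum_{j=1}^r \coeff_{E_j}(Z-Z_0)$, which is well defined because the fundamental cycle $Z_0$ is the minimum non-zero anti-nef cycle and hence $Z \ge Z_0$; this quantity vanishes exactly when $Z = Z_0$, which is ruled out by hypothesis.

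The inductive step rests on the following existence sub-claim: for every anti-nef cycle $Z > Z_0$, there exists a positive cycle $Y$ with $Y \le Z_0$ such that $Z' := Z - Y$ is again anti-nef. Granting this, $Z'$ must still be non-zero (otherwise $Z = Y \le Z_0$, forcing $Z = Z_0$), so $Z'$ is a non-zero anti-nef cycle and hence $Z' \ge Z_0$. Applying the inductive hypothesis to $Z'$ yields anti-nef cycles $Z_0 < Z_1 < \cdots < Z_{s-1} = Z'$ together with positive cycles $Y_1 \ge \cdots \ge Y_{s-1}$, all $\le Z_0$, satisfying the telescoping relations. Setting $Z_s := Z$ and $Y_s := Y$ then completes the filtration, provided one has also arranged that $Y_s \le Y_{s-1}$.

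To prove the sub-claim I would use a Laufer-style construction: beginning with some component $E_j$ for which $\coeff_{E_j}(Z) > \coeff_{E_j}(Z_0)$, one iteratively enlarges a tentative cycle $Y$ (keeping $Y \le Z_0$ throughout) in order to force $Z - Y$ to be anti-nef. The fact that the infimum of two anti-nef cycles is again anti-nef—noted earlier in the text—underpins both the well-definedness and the termination of such a procedure. The genuinely hard point, which I expect to be the main technical obstacle, is arranging the monotonicity $Y_s \le Y_{s-1}$ on the top two stages of the filtration. Because the supremum of two anti-nef cycles is in general \emph{not} anti-nef, one cannot simply take $Y_s$ to be the unique maximal admissible positive cycle; and similarly a naive minimal choice need not be compatible with the $Y_{s-1}$ delivered by the induction. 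The remedy I would aim for is to make the choice of $Y_s$ canonical, for instance by tying it to a distinguished connected subconfiguration of $E$ on which $Z$ strictly exceeds $Z_{s-1}$ (anticipating the fundamental-cycle description that will be exploited in Theorem \ref{Spcycle}), so that the induction applied to $Z - Y_s$ automatically produces a $Y_{s-1}$ with $Y_{s-1} \ge Y_s$. Verifying this compatibility—and ultimately that the same canonical procedure, performed in reverse, gives the advertised decreasing chain—is where the main effort will lie.
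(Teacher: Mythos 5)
Your proposal is a plan rather than a proof, and the two points you yourself flag as "the main technical obstacle" are exactly the points that remain unproven: (i) the sub-claim that for every anti-nef $Z>Z_0$ there is a positive $Y\le Z_0$ with $Z-Y$ anti-nef is only asserted, with a "Laufer-style" procedure sketched but neither carried out nor shown to terminate with the required properties; and (ii) the monotonicity $Y_s\le Y_{s-1}\le\cdots\le Y_1$ is explicitly deferred to a "canonical choice" that is never specified. An induction that peels off one $Y$ at a time has no control over how the $Y$ produced at the top stage compares with the $Y$'s produced by the inductive hypothesis applied to $Z-Y$, so as written the argument does not establish the decreasing chain, which is an essential part of the statement (it is what forces each $Y_k$ to be positive and $\le Z_0$, and it is used later in Theorem \ref{Spcycle}).

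The paper resolves both issues at once with a single explicit formula, using the observation (recorded just before the lemma) that the componentwise infimum of two anti-nef cycles is anti-nef. Choose $s\ge 1$ with $Z\not\le sZ_0$ and $Z\le (s+1)Z_0$, and set $Z_k=\inf\bigl(Z,(k+1)Z_0\bigr)$ for $k=1,\dots,s$, $Y_k=Z_k-Z_{k-1}$. Each $Z_k$ is anti-nef by the infimum property, and writing $a_i=\coeff_{E_i}Z$, $n_i=\coeff_{E_i}Z_0$, one has $\coeff_{E_i}Y_k=\min\bigl(a_i,(k+1)n_i\bigr)-\min\bigl(a_i,kn_i\bigr)$; since $t\mapsto\min(a_i,tn_i)$ is concave and nondecreasing, these increments lie in $[0,n_i]$ and decrease in $k$, giving $0\le Y_s\le\cdots\le Y_1\le Z_0$ for free, while $Z\not\le sZ_0$ forces $Y_s>0$. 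I would recommend you replace the inductive scheme with this direct construction: the "canonical choice" you were searching for is precisely $Y_k=\inf\bigl(Z,(k+1)Z_0\bigr)-\inf\bigl(Z,kZ_0\bigr)$.
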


\begin{proof}
We can take an integer $s \ge 1$ such that $Z \not \le s Z_0$ and 
$Z \le (s+1)Z_0$. 
Put $Z_k = \inf(Z, (k+1)Z_0)$ for every $k=1,\ldots,s$. 
Then $Z_1,\ldots,Z_s$ are anti-nef cycles.  
In particular, $Z_0 \le Z_1 \le Z_2 \le \cdots \le Z_s =Z$. 
Moreover, if we put $Y_{k} = Z_{k} - Z_{k-1}$ for every $k=1,\ldots,s$, then 
we can obtain the required sequence. 
\end{proof}

\par 
Under the notation as in Lemma \ref{FiltCycles}, we put 
$I_{k}=I_{Z_{k}}=H^0(X,\mathcal{O}_X(-Z_{k}))$ for 
every $k=0,1,\ldots,s$. Then each $I_{k}$ is a good ideal and  
\[
 I = I_s \subset I_{s-1} \subset \cdots \subset I_1 \subset I_0 = \fkm.
\]  

The following theorem is the main theorem in this section, which gives a criterion 
for $I=I_Z$ to be a special ideal
in terms of cycles. 

\begin{thm} \label{Spcycle}
Let $Z=\sum_{j=1}^r a_j E_j \ne Z_0$ be an anti-nef cycle on the minimal resolution $X \to \Spec A$, 
and put $I=I_Z$. 
Let $Z_0=\sum_{j=1}^r n_j E_j$ denote the fundamental cycle on $X$. 
Suppose that $1 \le i \le r$. 
Then the following conditions are equivalent$:$ 
\begin{enumerate}
 \item[$(1)$]  $M_i$ is a special Cohen--Macaulay $A$-module with respect to $I$. 
 \item[$(2)$]  $a_i= n_i \cdot \ell_A(A/I) $. 
 \item[$(3)$]  There exist positive cycles $0 < Y_s \le \ldots \le Y_1 \le Z_0$ 
and anti-nef cycles $Z_1,\ldots,Z_s$ 
so that $Z_k = Z_{k-1}+Y_{k}$ for each $k = 1,\ldots,s$ and 
\[
Z_{k-1}\cdot Y_{k} =0, \quad p_a(Y_{k})=0 \quad  
\text{and} \quad \coeff_{E_i} Y_{k} =n_i
\quad \text{for every $k =1,2,\ldots,s$}, 
\]
where $\coeff_{E_i} W$ stands for the coefficient of $E_i$ in a cycle $W$. 
\end{enumerate}
When this is the case, $\ell_A(A/I)=s+1$ and every $I_{k} := I_{Z_{k}}$ is a special ideal. 
Moreover, for every $k = 1,2,\ldots,s$, we obtain that $\Supp(Y_{k})$ is connected,   
$\Supp(Y_{k}) \subset \cup \{E_j \subset \Supp(Y_{k-1}) \,|\,E_jZ_{k-1}=0\}$, 
and $Y_{k}$ is the fundamental cycle on $\Supp(Y_{k})$. 
\end{thm}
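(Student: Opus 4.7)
\emph{Proof plan.}

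\textit{$(1)\Leftrightarrow(2)$.} Because $M_i$ is a special Cohen--Macaulay module, the isomorphism $\Syz_A^{1}(M_i)\cong M_i^{*}$ yields a presentation $0\to M_i^{*}\to A^{2n_i}\to M_i\to 0$, so $\mu_A(M_i)=2n_i$ and the $A/I$-freeness of $M_i/IM_i$ is equivalent to the length equation $\ell_A(M_i/IM_i)=2n_i\cdot\ell_A(A/I)$. Tensoring $0\to\mathcal{O}_X(-Z)\to\mathcal{O}_X\to\mathcal{O}_Z\to 0$ with the locally free sheaf $\widetilde{M_i}$ and applying Riemann--Roch on the Cohen--Macaulay curve $Z$ (invoking the rational-singularity vanishing that identifies $\ell_A(M_i/IM_i)$ with $\chi(\widetilde{M_i}\otimes\mathcal{O}_Z)$) we obtain
\[
\ell_A(M_i/IM_i)=n_i\cdot\chi(\mathcal{O}_Z)+c_{1}(\widetilde{M_i})\cdot Z=n_i\cdot\ell_A(A/I)+a_i,
\]
using $c_{1}(\widetilde{M_i})\cdot E_j=\delta_{ij}$ and $\chi(\mathcal{O}_Z)=\ell_A(A/I)$. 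Comparing the two expressions for $\ell_A(M_i/IM_i)$ gives $(1)\Leftrightarrow(2)$.

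\textit{$(3)\Rightarrow(2)$.} Summing the $E_i$-coefficients of $Z=Z_{0}+\sum_{k=1}^{s}Y_k$ gives $a_i=(s+1)n_i$, so it suffices to prove $\ell_A(A/I)=s+1$. For any anti-nef cycle $W$ on a rational singularity one has $\ell_A(A/I_W)=1-p_a(W)$; the additivity $p_a(Z_{k-1}+Y_k)=p_a(Z_{k-1})+p_a(Y_k)+Z_{k-1}\cdot Y_k-1$ then reduces each $\ell_A(I_{k-1}/I_k)$ to $1-p_a(Y_k)-Z_{k-1}\cdot Y_k$, which equals $1$ under the hypotheses of (3). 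Summing over $k=1,\ldots,s$ yields $\ell_A(A/I)=\ell_A(A/\fkm)+s=s+1$.

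\textit{$(2)\Rightarrow(3)$ --- the main step.} Applying the Riemann--Roch identity from the first paragraph to every special module $M_j$ gives $\ell_A(M_j/IM_j)=n_j\ell_A(A/I)+a_j$; since $M_j/IM_j$ is a quotient of $(A/I)^{2n_j}$, this forces the uniform bound $a_j\le n_j t$ with $t:=\ell_A(A/I)$. Hence $Z\le tZ_{0}$, whereas $a_i=tn_i$ gives $Z\not\le(t-1)Z_{0}$; Lemma \ref{FiltCycles} then produces a chain of \emph{exactly} $s=t-1$ steps with $Z_k=\inf(Z,(k+1)Z_{0})$, and a direct coefficient count shows $\coeff_{E_i}Y_k=n_i$ for every $k$. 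By the same calculation as in the previous paragraph, each $\ell_A(I_{k-1}/I_k)=1-p_a(Y_k)-Z_{k-1}\cdot Y_k$ is the sum of two non-negative integers (Artin's $p_a\le 0$ and the anti-nefness of $Z_{k-1}$), and since their total equals $s$, every summand must vanish individually, giving $p_a(Y_k)=0$ and $Z_{k-1}\cdot Y_k=0$ for every $k$.

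\textit{Remaining assertions.} Connectedness of $\Supp(Y_k)$ follows from $p_a(Y_k)=0$ and positivity. The inclusion $\Supp(Y_k)\subset\{E_j\subset\Supp(Y_{k-1}):E_j\cdot Z_{k-1}=0\}$ comes from writing $0=Z_{k-1}\cdot Y_k=\sum_j(\coeff_{E_j}Y_k)(E_j\cdot Z_{k-1})$ as a sum of non-positive terms and combining with $Y_k\le Y_{k-1}$. Finally, $Y_k\cdot E_j=Z_k\cdot E_j\le 0$ for $E_j\in\Supp(Y_k)$ shows $Y_k$ is anti-nef on its support, and since $p_a$ strictly decreases when a new component is adjoined to an anti-nef cycle on a rational configuration, the conditions $p_a(Y_k)=0$ together with the minimality of the fundamental cycle on $\Supp(Y_k)$ force $Y_k$ to coincide with that fundamental cycle. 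The main obstacle is the uniform bound $Z\le tZ_{0}$ used in the third paragraph, which seems to require the Riemann--Roch identity for \emph{every} McKay module $M_j$, not only for $M_i$.
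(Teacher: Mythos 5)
Your argument is correct and runs on the same engine as the paper's proof: Kato's Riemann--Roch formula (Lemma \ref{RR}), the presentation $0\to M_j^{*}\to A^{2n_j}\to M_j\to 0$ giving $\mu_A(M_j)=2n_j$, the filtration $Z_k=\inf(Z,(k+1)Z_0)$ of Lemma \ref{FiltCycles}, and the additivity of $p_a$ (Lemma \ref{Filtlength}). The only genuine divergence is in $(2)\Rightarrow(3)$: the paper inducts on $s$ and writes the single quantity $n_i\cdot\ell_A(A/I)-a_i$ as a sum of four manifestly nonnegative terms, namely $\{n_i\ell_A(A/I_{s-1})-\coeff_{E_i}Z_{s-1}\}+n_i(-Y_sZ_{s-1})+n_i(-p_a(Y_s))+\{n_i-\coeff_{E_i}Y_s\}$, so that equality kills all four at once; you instead first pin down $s=\ell_A(A/I)-1$ and then let the length count $\sum_k\ell_A(I_{k-1}/I_k)=s$, with each summand $\ge 1$, force $p_a(Y_k)=Y_kZ_{k-1}=0$. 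Both are valid, and yours has the small advantage of making the role of $\ell_A(A/I)$ explicit. The ``main obstacle'' you flag at the end is not one: the inequality $a_j\le n_j\cdot\ell_A(A/I)$ is exactly Lemma \ref{Key-Sp}(1), whose proof is uniform in the index because every Wunram module $M_j$ satisfies $\mu_A(M_j)=2n_j$; in fact you can avoid the uniform bound entirely, since $\ell_A(A/I)-1=\sum_{k=1}^s\ell_A(I_{k-1}/I_k)\ge s$ already gives $s\le\ell_A(A/I)-1$, while $a_i=n_i\cdot\ell_A(A/I)$ together with $Z\le(s+1)Z_0$ gives the reverse inequality. Two small omissions to patch: you do not verify that the coefficient functions $k\mapsto\min(a_j,kn_j)$ are concave (which is what makes $Y_{k+1}\le Y_k$ in Lemma \ref{FiltCycles}), and you do not address the assertion that every intermediate $I_k$ is a special ideal --- this follows by applying $(3)\Rightarrow(1)$ to the truncated chain $Z_0,\ldots,Z_k$, which witnesses condition $(3)$ for $Z_k$ with the same index $i$.
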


\begin{rem}
$Z_k:=Y_k+Z_{k-1}$ is not always anti-nef 
even if $Y_k$ is the fundamental cycle on $X$ which satisfies $p_a(Y_k)=0$ and 
$Y_kZ_{k-1}=0$.  
\end{rem}

\par 
Let us begin the proof of Theorem \ref{Spcycle}. 
The following formula is one of the main tools in this paper. 

\begin{lem}[\textbf{Kato's  Riemann--Roch formula; \cite{Ka}, \cite{WY}}] \label{RR}
Let $Z$ be an anti-nef cycle on the minimal resolution of singularities $X$, 
and put $I_Z=H^0(X,\mathcal{O}_X(-Z))$. 
Then for any maximal Cohen--Macaulay $A$-module $M$, we have 
\[
 \ell_A(M/I_ZM) = \rank_A M \cdot \ell_A(A/I_Z)+ c_1(\widetilde{M})Z.
\]
In particular, 
\[
 \ell_A(A/I_Z) = - \dfrac{Z^2+K_XZ}{2} = 1- p_a(Z). 
\]
\end{lem}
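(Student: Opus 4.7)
The plan is to compute $\ell_A(M/I_Z M)$ as an Euler characteristic on the one-dimensional subscheme $Z\subset X$ and then apply Riemann--Roch on $Z$; rationality of $A$ is what lets us promote Euler characteristics to genuine lengths.

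First, since $\widetilde{M}$ is a reflexive coherent sheaf on the regular two-dimensional scheme $X$, it is locally free of rank $r=\rank_A M$. Tensoring the ideal-sheaf sequence $0\to\mathcal{O}_X(-Z)\to\mathcal{O}_X\to\mathcal{O}_Z\to 0$ by $\widetilde{M}$ is therefore exact, giving
\[
0\to\widetilde{M}(-Z)\to\widetilde{M}\to\widetilde{M}|_Z\to 0,
\]
with $\widetilde{M}(-Z)=I_Z\widetilde{M}$. I would apply $\varphi_*$ and use two consequences of the rationality hypothesis: the vanishing $R^1\varphi_*\widetilde{M}(-Z)=0$ (which I would deduce from Lipman's $H^1(X,\mathcal{O}_X(-Z'))=0$ for anti-nef $Z'$, extended from $\mathcal{O}_X$ to $\widetilde{M}$ via a filtration), and the identification $H^0(X,\widetilde{M}(-Z))=I_Z M$ (which follows, given the vanishing, from the surjectivity of the natural evaluation $I_Z\otimes_A M\to H^0(X,I_Z\widetilde{M})$). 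Combined with the standard $R^1\varphi_*\widetilde{M}=0$, these collapse the long exact sequence to
\[
0\to I_Z M\to M\to H^0(X,\widetilde{M}|_Z)\to 0,
\]
and also force $H^1(Z,\widetilde{M}|_Z)=0$; hence $\ell_A(M/I_Z M)=\chi(\widetilde{M}|_Z)$.

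Next, Riemann--Roch for the locally free rank-$r$ sheaf $\widetilde{M}|_Z$ on the (possibly reducible, non-reduced) one-dimensional scheme $Z$ gives
\[
\chi(\widetilde{M}|_Z)=c_1(\widetilde{M})\cdot Z+r\cdot\chi(\mathcal{O}_Z),
\]
while adjunction for the cycle $Z$ on the smooth surface $X$ yields $\chi(\mathcal{O}_Z)=1-p_a(Z)=-(Z^2+K_XZ)/2$. Specializing the main formula to $M=A$, where $\widetilde{M}=\mathcal{O}_X$ and $c_1=0$, gives $\ell_A(A/I_Z)=\chi(\mathcal{O}_Z)$, which is the ``in particular'' statement; substituting this identity back recovers the general formula.

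The principal obstacle is the vanishing $R^1\varphi_*\widetilde{M}(-Z)=0$ for an arbitrary reflexive $\widetilde{M}$, not just for $\mathcal{O}_X$. I would handle this by induction on $Z$ via Lemma \ref{FiltCycles}: writing $Z=Z_{k-1}+Y_k$ with each $Z_k$ anti-nef, the exact sequence
\[
0\to\widetilde{M}(-Z_k)\to\widetilde{M}(-Z_{k-1})\to\widetilde{M}(-Z_{k-1})|_{Y_k}\to 0
\]
reduces the inductive step to $H^1(Y_k,\widetilde{M}(-Z_{k-1})|_{Y_k})=0$; choosing the filtration so that $Z_{k-1}\cdot Y_k=0$ makes the twist trivial on $Y_k$, reducing further to $H^1(Y_k,\widetilde{M}|_{Y_k})=0$, which follows from the rationality of $A$ applied to the module $M$ itself.
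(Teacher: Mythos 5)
The paper does not prove this lemma at all: it is imported as a known result of Kato \cite{Ka} (see also \cite{WY}) and used as a black box, so there is no internal argument to measure yours against. Judged on its own terms, your architecture is the standard one and is correct in outline: reduce $\ell_A(M/I_ZM)$ to the Euler characteristic $\chi(\widetilde{M}|_Z)$ by killing the relevant $H^1$'s via rationality, then apply Riemann--Roch for a locally free sheaf on the one-dimensional scheme $Z$ together with adjunction on $X$; the ``in particular'' statement is indeed just the case $M=A$. (One small misstatement along the way: $\varphi^{*}M/\mathrm{torsion}$ is torsion-free but not formally reflexive, and its local freeness is a nontrivial fact about full sheaves on resolutions of rational surface singularities, due to Esnault and Wunram, rather than a consequence of reflexivity on a regular surface.)

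The step you rightly flag as the principal obstacle, namely $H^1(X,\widetilde{M}(-Z))=0$, is not closed by the mechanism you propose. The filtration of Lemma \ref{FiltCycles} does not satisfy $Z_{k-1}\cdot Y_k=0$, and no choice of filtration can arrange it: that orthogonality is precisely the restrictive condition characterizing special cycles in Theorem \ref{Spcycle}, and it already fails for $Z=2Z_0$, where $Y_1=Z_0$ and $Z_0^2=-\e_{\fkm}^0(A)<0$. Even if the total intersection number were zero, $\mathcal{O}_X(-Z_{k-1})|_{Y_k}$ having degree zero on each component of the reducible, possibly non-reduced curve $Y_k$ would not make it trivial without a further argument on $\mathrm{Pic}(Y_k)$. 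The standard repair bypasses the filtration entirely: for an anti-nef cycle $Z$ on a rational singularity, $\mathcal{O}_X(-Z)$ is generated by its global sections $I_Z$ (Lipman), so a surjection $\mathcal{O}_X^{\oplus m}\to\mathcal{O}_X(-Z)$ tensored with the locally free $\widetilde{M}$ yields a surjection $\widetilde{M}^{\oplus m}\to\widetilde{M}(-Z)$; since $\varphi$ has one-dimensional fibers and $\Spec A$ is affine, $H^2(X,-)$ vanishes on coherent sheaves, and hence $H^1(X,\widetilde{M}(-Z))$ is a quotient of $H^1(X,\widetilde{M})^{\oplus m}=0$. The companion identification $H^0(X,\widetilde{M}(-Z))=I_ZM$ also needs a genuine argument from this globally generated presentation, rather than the appeal to ``surjectivity of the natural evaluation,'' which is exactly what has to be shown. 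With these substitutions your proof closes.
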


\par 
The next lemma easily follows from Lemma \ref{RR}. 

\begin{lem} \label{Filtlength}
Under the notation as in Theorem $\ref{Spcycle}$, we have 
\[
 \ell_A(A/I_{k})  = \ell_A(A/I_{k-1}) - Y_{k}Z_{k-1}+1-p_a(Y_{k}). 
\]
\end{lem}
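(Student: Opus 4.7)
The plan is to deduce the identity from two inputs the paper has already recorded: Kato's Riemann--Roch formula (Lemma \ref{RR}) and the additivity relation $p_a(Y+Y')=p_a(Y)+p_a(Y')+YY'-1$ for the virtual genus noted in Section \ref{Pre}. First, since $Z_{k-1}$ and $Z_k=Z_{k-1}+Y_k$ are both anti-nef (this is part of the setup of Theorem \ref{Spcycle}, coming from the construction in Lemma \ref{FiltCycles}), Lemma \ref{RR} applies to each and gives
\[
\ell_A(A/I_k)=1-p_a(Z_k),\qquad \ell_A(A/I_{k-1})=1-p_a(Z_{k-1}).
\]

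Next, I would apply the additivity formula with $Y=Z_{k-1}$ and $Y'=Y_k$, obtaining $p_a(Z_k)=p_a(Z_{k-1})+p_a(Y_k)+Z_{k-1}Y_k-1$. Substituting this into the first equality yields
\[
\ell_A(A/I_k)=1-p_a(Z_{k-1})-p_a(Y_k)-Z_{k-1}Y_k+1=\ell_A(A/I_{k-1})-Y_kZ_{k-1}+1-p_a(Y_k),
\]
which is exactly the claim. There is essentially no obstacle here: the only point worth recording is that the additivity of $p_a$ holds for arbitrary positive cycles, so one does not need to verify that $Y_k$ itself is anti-nef (indeed it need not be, as the subsequent remark after Theorem \ref{Spcycle} emphasizes). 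Thus the lemma reduces to a purely formal computation using the tools already collected in the preliminaries.
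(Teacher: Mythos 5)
Your argument is correct and is essentially identical to the paper's own proof: both apply Kato's Riemann--Roch formula (Lemma \ref{RR}) to $Z_k$ and $Z_{k-1}$ and then expand $p_a(Z_{k-1}+Y_k)$ via the additivity formula for the virtual genus. Your added remark that additivity holds for arbitrary positive cycles, so no anti-nefness of $Y_k$ is needed, is a sensible clarification but does not change the argument.
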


\begin{proof}
By Lemma \ref{RR}, we have 
\begin{eqnarray*}
\ell_A(A/I_{k}) 
&=& 1-p_a(Z_{k}) =1-p_a(Z_{k-1}+Y_{k}) \\
&=& 1-p_a(Z_{k-1})-p_a(Y_{k})-Y_{k}Z_{k-1}+1 \\
&=& \ell_A(A/I_{k-1})-Y_{k}Z_{k-1}+1-p_a(Y_{k}),
\end{eqnarray*}
as required. 
\end{proof}

\par
The following lemma is a key lemma in the proof of the theorem.

\begin{lem} \label{Key-Sp}
Under the notation as in Theorem $\ref{Spcycle}$, we have 
\begin{enumerate}
 \item[$(1)$] $a_i  \le n_i \cdot \ell_A(A/I)$. 
 \item[$(2)$]  
 Equality holds in (1) if and only if $M_i$ is a 
special Cohen--Macaulay $A$-module with respect to $I$. 
\end{enumerate}
\end{lem}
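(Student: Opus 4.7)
The plan is to combine Kato's Riemann--Roch formula (Lemma \ref{RR}) with the Iyama--Wemyss characterization of special Cohen--Macaulay modules recalled just above the lemma to reduce the inequality to a clean length count on $M_i/IM_i$.

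First I would apply Lemma \ref{RR} directly to $M_i$. Writing $Z=\sum_{j=1}^r a_j E_j$ and using the defining property $c_1(\widetilde{M_i})E_j=\delta_{ij}$ of the special McKay correspondence, the intersection term reduces to $c_1(\widetilde{M_i})\cdot Z=a_i$, so
\[
\ell_A(M_i/IM_i) \;=\; n_i\cdot \ell_A(A/I) + a_i.
\]
As a sanity check, taking $Z=Z_0$ gives $a_i=n_i$, $I=\fkm$, and recovers $\ell_A(M_i/\fkm M_i)=2n_i$, which matches the well-known fact $\mu_A(M_i)=2n_i$ and pins down the sign convention.

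Next I would use $\Syz_A^1(M_i)\cong M_i^{*}$ (from the characterization attributed to Iyama--Wemyss, applied to $M_i$ which has no free summand) to extract the short exact sequence
\[
0\to M_i^{*} \to A^{\mu_A(M_i)} \to M_i \to 0.
\]
Taking ranks forces $\mu_A(M_i)=2\cdot\rank_A M_i=2n_i$. In particular $M_i/IM_i$ is a quotient of $(A/I)^{2n_i}$, so $\ell_A(M_i/IM_i)\le 2n_i\cdot \ell_A(A/I)$. Substituting into the Riemann--Roch equality above yields at once $a_i\le n_i\cdot \ell_A(A/I)$, which is (1).

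For (2), the equality $a_i=n_i\cdot \ell_A(A/I)$ translates (via the Riemann--Roch equation) into $\ell_A(M_i/IM_i)=\mu_A(M_i)\cdot\ell_A(A/I)$, which is in turn equivalent to the natural surjection $(A/I)^{\mu_A(M_i)}\to M_i/IM_i$ being an isomorphism, i.e., to $M_i/IM_i$ being $A/I$-free. Since $M_i$ is special by its very construction, this is precisely the condition in Definition \ref{SpCM-def} for $M_i$ to be a special Cohen--Macaulay $A$-module with respect to $I$. The only step requiring genuine care is the initial intersection-number computation; once the sign convention is fixed by the $Z=Z_0$ sanity check, the rest is immediate bookkeeping.
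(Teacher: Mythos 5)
Your proposal is correct and follows essentially the same route as the paper: apply Kato's Riemann--Roch formula to get $\ell_A(M_i/IM_i)=n_i\cdot\ell_A(A/I)+a_i$, bound this by $\mu_A(M_i)\cdot\ell_A(A/I)=2n_i\cdot\ell_A(A/I)$, and observe that equality is exactly $A/I$-freeness of $M_i/IM_i$. The paper simply cites $\mu_A(M_i)=2n_i$ as a known property of special Cohen--Macaulay modules (it is proved via the same exact sequence $0\to M_i^{*}\to A^{\mu}\to M_i\to 0$ in Lemma \ref{WSpvsSp}(1)), so your explicit derivation of that count is consistent with, and no more than a spelled-out version of, the paper's argument.
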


\begin{proof} 
By Kato's Riemann--Roch formula, we have 
\[
 \ell_A(M_i/IM_i) = \rank_A M_i \cdot \ell_A(A/I) + c_1(\widetilde{M_i})\cdot Z
 = n_i \cdot \ell_A(A/I) + a_i.
\]
On the other hand, $\mu_A(M_i) = 2 n_i$ because 
$M_i$ is a special Cohen--Macaulay $A$-module (with respect to $\fkm$). 
Hence 
\[
 \ell_A(M_i/IM_i)  \le \mu_A(M_i)  \cdot \ell_A(A/I) = 2 n_i \cdot  \ell_A(A/I).
\]  
Therefore $a_i \le n_i \cdot  \ell_A(A/I) $ and equality holds true if and only if 
$M_i/IM_i$ is $A/I$-free, which means that $M_i$ is a special Cohen--Macaulay $A$-module with 
respect to $I$. 
\end{proof}

\begin{proof}[Proof of Theorem $\ref{Spcycle}$]
$(1) \Longleftrightarrow (2)$ follows from Lemma \ref{Key-Sp}. 
\par 
$(2) \Longleftrightarrow (3)$: 
We use induction on $s$. 
By Lemma \ref{Filtlength}, we have 
\begin{eqnarray*}
&& n_i \cdot \ell_A(A/I) - \coeff_{E_i} Z \\
&=& n_i \left\{\ell_A(A/I_{s-1}) -Y_sZ_{s-1}+1-p_a(Y_s) \right\} - \coeff_{E_i} Z_{s-1} - \coeff_{E_i} Y_s \\
& = & \left\{n_i \cdot \ell_A(A/I_{s-1})- \coeff_{E_i} Z_{s-1}  \right\} 
+ n_i (-Y_sZ_{s-1}) + n_i(-p_a(Y_s)) + \left\{n_i - \coeff_{E_i} Y_s\right\}. 
\end{eqnarray*}
By the induction hypothesis, $n_i \cdot \ell_A(A/I_{s-1})- \coeff_{E_i} Z_{s-1}  \ge 0$. 
Since $Z_{s-1}$ is anti-nef, $-Y_sZ_{s-1}\ge 0$. 
As $A$ is rational, $-p_a(Y_s) \ge 0$. 
By the choice of $Y_s$,  $n_i - \coeff_{E_i} Y_s \ge 0$. 
Hence we obtain the required inequality, and 
equality holds if and only if 
\[
n_i \cdot\ell_A(A/I_{s-1})= \coeff_{E_i} Z_{s-1}, \quad  
j_sZ_{s-1} = p_a(Y_s)=0 \quad \text{and} \quad 
n_i= \coeff_{E_i} Y_s. 
\]
Therefore the assertion follows from the induction hypothesis. 
\par
Now suppose that one of (1),(2),(3) holds. The induction hypothesis implies 
that $I_{s-1}$ is a special ideal with $\ell(A/I_{s-1})=s$ and 
$\Supp(Y_{k})$ is connected  and  
$\Supp(Y_{k}) \subset  \cup \{E_j \,|\,E_jZ_{k-1}=0\}$ 
and $Y_{k}$ is the fundamental cycle on $\Supp(Y_{k})$ for every $k =1,\ldots,s-1$. 
Then it follows from Lemma \ref{Filtlength} that 
$\ell(A/I_{s})=\ell(A/I_{s-1})+1 = s+1$. 
As $Y_{s}Z_{s-1}=0$ implies that $\Supp(Y_s) \subset \cup \{E_j \,|\,E_jZ_{s-1}=0\}$. 
Moreover, since $p_a(Y_{s})=0$, $Y_{s}$ must be connected. 
\par  
Let us show that $Y_{s}$ is the fundamental cycle on $\Supp(Y_{s})$. 
For each $E_{j} \subset \Supp(Y_s)$, 
$Y_sE_j = Y_s E_j + Z_{s-1}E_j = Z_s E_j \le 0$. 
Hence  $Y_s$ is anti-nef on  $\Supp(Y_s)$. 
If $Y_s$ is not the fundamental cycle on  $\Supp(Y_s)$, then 
there exist an $E_j \subset \Supp(Y_s)$ 
and an anti-nef cycle $Y_s'$ on  $\Supp(Y_s)$ so that 
$Y_s = Y_s' + E_j$. 
Then 
\[
 0 = p_a(Y_s) =p_a(Y_s')+p_a(E_j)+Y_s'E_j -1 \le p_a(Y_s') -1 \le -1. 
\]
This is a contradiction. 
\end{proof}

\section{Ulrich ideals and modules over rational double points} \label{RDP}

The goal of this section is to classify
Ulrich ideals of any two-dimensional Gorenstein rational singularity 
(rational double point) $A$ and determine all of the Ulrich $A$-modules 
with respect to those ideals. 

\par 
First we recall the definition of rational double points. 

\begin{defn}[\textbf{Rational double point}] \label{RDP-def}
Let $A$ be a two-dimensional complete Noetherian local ring with unique maximal ideal $\fkm$ 
containing an algebraically closed field $k$. 
Then $A$ is said to be a \textit{rational double point} if 
it is isomorphic to the hypersurface $k[[x,y,z]]/(f)$, where 
$f$ is one of the following polynomials$:$
\[
\begin{array}{cll}
 (A_n) & z^2 + x^2 + y^{n+1} & (n \ge 1), \\
 (D_n) & z^2 + x^2y + y^{n-1} & (n \ge 4), \\
 (E_6) & z^2+x^3+y^4, & \\ 
 (E_7) & z^2+x^3+xy^3, & \\
 (E_8) & z^2+x^3+y^5. & 
\end{array}
\]  
\end{defn}

Note that $A$ is a $2$-dimensional 
Gorenstein rational singularity (of characteristic $0$) 
if and only if the $\fkm$-adic completion $\widehat{A}$ is a 
rational double point in the above sense. 

\par
The following theorem is the first main result in this section. 
In the latter half of this section, we give  
the complete classification 
of Ulrich ideals and modules as an application of the theorem. 

\begin{thm}[\textrm{See also Theorem \ref{UvsWsp}}] \label{SPvsUlrich-RDP}
Assume that $A$ is a rational double point of dimension $2$, 
and let $I \subset A$ be a nonparameter $\fkm$-primary ideal. 
Then the following conditions are equivalent$:$
\begin{enumerate}
 \item[$(1)$]  $M$ is an Ulrich $A$-module with respect to $I$. 
 \item[$(2)$]  $M$ is a special Cohen--Macaulay $A$-module with respect to $I$. 
 \item[$(3)$]  $M$ is a weakly special Cohen--Macaulay $A$-module with respect to $I$. 
 \item[$(4)$]  $M/IM$ is $A/I$-free and $M$ has no free summands. 
\end{enumerate}
When this is the case, $I$ is an Ulrich ideal 
and $M^{*} \cong \Syz_A^1(M)$ is also an Ulrich $A$-module with respect to $I$. 
\end{thm}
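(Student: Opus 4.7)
The plan is to establish the cycle of implications $(1)\Rightarrow(4)\Rightarrow(3)\Rightarrow(2)\Rightarrow(1)$ and then read off the additional assertions from the resulting equivalence. For $(1)\Rightarrow(4)$, I would argue that an Ulrich $A$-module $M$ has no free summand: a free summand $A$ would itself be an Ulrich module with respect to $I$, forcing $e_I^0(A)=\ell_A(A/I)$ and hence $I$ equal to its minimal reduction, contradicting the standing hypothesis that $I$ is a nonparameter ideal.

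For $(4)\Rightarrow(3)\Rightarrow(2)$, the central input is the hypersurface identity $\mu_A(M)=2\rank_A M$ for every MCM $A$-module $M$ without free summand, which follows by unpacking a minimal square matrix factorization and passing to the total quotient ring. This immediately gives $(4)\Rightarrow(3)$. For $(3)\Rightarrow(2)$, I would decompose $M=A^f\oplus N$ with $N$ having no free summand, express $N$ as a direct sum of Wunram modules via the structural remark after the Iyama--Wemyss proposition (each with $\mu_A(M_i)=2n_i=2\rank_A M_i$), and use weak speciality to deduce $f=0$. The freeness of $M/IM$ over $A/I$ then descends to each indecomposable summand since $A/I$ is local, making $M$ special with respect to $I$.

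The key step is $(2)\Rightarrow(1)$, for which I would pick any indecomposable special summand $M_i$ of $M$ and apply Theorem~\ref{Spcycle} to conclude that $I$ is a special ideal, hence a good ideal in the sense of Definition~\ref{Ratgood}. The remark following Definition~\ref{Ratgood} identifies this with goodness in the sense of Definition~\ref{Gorgood}, so $I$ is stable. Since Lemma~\ref{WSpvsSp} tells us $M$ is weakly special with respect to $I$, Theorem~\ref{UvsWsp} then produces the Ulrich conclusion.

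For the ``when this is the case'' assertions, combining the equivalence with Corollary~\ref{UidealvsMod} upgrades $I$ from a good ideal to an Ulrich ideal, so in particular $\mu_A(I)=d+1=3>d$ as required below; the isomorphism $\Syz_A^1 M\cong M^*$ is the Iyama--Wemyss characterization applied to the special module $M$, and Theorem~\ref{Dual} then delivers the Ulrich property of both $\Syz_A^1 M$ and $M^\vee=M^*$ (using $K_A=A$ since $A$ is Gorenstein). I expect the main difficulty to lie in $(2)\Rightarrow(1)$: bootstrapping a purely module-theoretic freeness condition on $M/IM$ into a stability condition on $I$ itself is not formal, and relies essentially on the geometric content of Theorem~\ref{Spcycle} together with the coincidence of the two notions of good ideal for Gorenstein rational singularities.
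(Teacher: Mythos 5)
Your implications $(1)\Rightarrow(4)$, $(4)\Rightarrow(3)$ and $(3)\Rightarrow(2)$ are sound (the last one, via the decomposition of an MCM module over a rational double point into Wunram summands and the rank count forcing the free part to vanish, is a legitimate alternative to the paper's route, which instead proves $(1)\Rightarrow(2)$ directly from Lemma \ref{HKuh} and the McKay correspondence). The problem is the step you yourself flag as the crux, $(2)\Rightarrow(1)$, and specifically the claim that Theorem \ref{Spcycle} lets you conclude that $I$ is a good ideal. Theorem \ref{Spcycle} is stated for $I=I_Z$ where $Z$ is an anti-nef cycle on the minimal resolution; that is, it \emph{presupposes} that $I$ is integrally closed and represented on the minimal resolution (good in the sense of Definition \ref{Ratgood}), and under that hypothesis characterizes which $M_i$ are special with respect to $I$. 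It cannot be run in reverse: from the bare hypothesis that some special Cohen--Macaulay module $M$ has $M/IM$ free over $A/I$, nothing in Theorem \ref{Spcycle} tells you that the arbitrary $\fkm$-primary ideal $I$ is of the form $I_Z$. Since your subsequent appeals to the stability of $I$ and to Theorem \ref{UvsWsp} all hang on this goodness, the chain $(2)\Rightarrow(1)$ is not closed.

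The paper closes exactly this gap with Lemma \ref{Stable-known}: for \emph{any} $\fkm$-primary ideal $I$ of a rational double point one has $\e_I^0(A)\le 2\cdot\ell_A(A/I)$, with equality if and only if $I$ is good; the proof uses that the integral closure $\overline{I}$ is stable ($\overline{I}^2=Q\overline{I}$) and the chain $I\subseteq\overline{I}\subseteq Q\colon\overline{I}\subseteq Q\colon I$ together with Matlis duality. With that inequality in hand, the computation from the proof of Theorem \ref{UvsWsp} runs \emph{without} assuming $I$ stable: $\ell_A(M/IM)=\mu_A(M)\,\ell_A(A/I)=2\,\rank_A M\,\ell_A(A/I)\ge \e_I^0(A)\,\rank_A M=\e_I^0(M)\ge\ell_A(M/IM)$, forcing equality throughout, which simultaneously shows that $M$ is Ulrich with respect to $I$ and that $I$ is good. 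You should replace your appeal to Theorem \ref{Spcycle} by this argument (applied to $M$, which you have already shown is weakly special); the remainder of your proof --- Proposition \ref{UvsWspideal} or Corollary \ref{UidealvsMod} to get that $I$ is Ulrich, the Iyama--Wemyss isomorphism $\Syz_A^1(M)\cong M^{*}$, and Theorem \ref{Dual} for the final assertion --- then goes through as you describe.
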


\par
In what follows, we prove Theorem \ref{SPvsUlrich-RDP}.
We need several lemmata. 

\begin{lem} \label{Stable-known}
Assume that $A$ is a rational double point of dimension $2$,  
and let $I \subset A$ be an $\fkm$-primary ideal. 
Then $e_I^0(A) \le 2 \cdot \ell_A(A/I)$ holds true 
and equality holds if and only if 
$I$ is a good ideal.  
\end{lem}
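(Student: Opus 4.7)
The plan is to reduce to the case of an integrally closed ideal via standard multiplicity/colength properties, and then invoke Lipman's theorem that integrally closed $\fkm$-primary ideals in a two-dimensional rational singularity are stable. After this, the bound follows from a Matlis duality length comparison over the Artinian Gorenstein quotient $A/Q$.

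First I would note that for any $\fkm$-primary ideal $I$, the integral closure $\overline{I}$ satisfies $e_I^0(A) = e_{\overline{I}}^0(A)$ while $\ell_A(A/\overline{I}) \le \ell_A(A/I)$, with equality in the second relation if and only if $I = \overline{I}$. This reduces the inequality to the integrally closed case, and lets me recover the equality condition at the end by tracking when both intermediate inequalities are saturated.

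For an integrally closed $\fkm$-primary ideal $I$, Lipman's theorem \cite{Li1} on rational surface singularities gives that $I$ is stable, i.e.\ $I^2 = QI$ for some (equivalently, every) minimal reduction $Q=(a_1,a_2)$ of $I$. In particular $I\cdot I\subseteq Q$, so $I\subseteq Q:I$. Because $A$ is Gorenstein and $Q$ is a parameter ideal, the Artinian ring $A/Q$ is Gorenstein, and Matlis duality over $A/Q$ identifies
\[
(Q:I)/Q = \Ann_{A/Q}(I) = \Hom_{A/Q}(A/I,\,A/Q),
\]
which has length $\ell_A(A/I)$. Combined with $I\subseteq Q:I$ this yields $\ell_A(I/Q)\le\ell_A(A/I)$, and hence
\[
e_I^0(A)=\ell_A(A/Q)=\ell_A(A/I)+\ell_A(I/Q)\le 2\,\ell_A(A/I),
\]
with equality exactly when $Q:I=I$, i.e.\ when $I$ is good in the sense of Definition \ref{Gorgood}.

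Putting the two steps together, $e_I^0(A)\le 2\,\ell_A(A/I)$ holds for every $\fkm$-primary ideal, and equality forces both $I = \overline{I}$ and $Q:I=I$; that is, $I$ is integrally closed and good in the sense of Definition \ref{Gorgood}. For a rational double point, the equivalence of Definitions \ref{Gorgood} and \ref{Ratgood} recorded right after Definition \ref{Ratgood} (see \cite[Theorem 7.8]{GIW} and \cite{WY}) then translates this into the statement that $I$ is good in the sense of Definition \ref{Ratgood}, as desired. The main substantive step is the reduction to integrally closed ideals together with Lipman's stability theorem; the Matlis duality identity and the final translation between the two notions of ``good'' are more formal.
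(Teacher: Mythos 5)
Your proof is correct and follows essentially the same route as the paper: both arguments rest on Lipman's theorem that the integral closure $\overline{I}$ is stable and on Matlis duality over the Artinian Gorenstein ring $A/Q$ to get $\ell_A(I/Q)\le\ell_A((Q\colon I)/Q)=\ell_A(A/I)$. The only difference is organizational — the paper runs the single chain $I\subset\overline{I}\subset Q\colon\overline{I}\subset Q\colon I$ instead of first reducing to the integrally closed case — so the two proofs are interchangeable.
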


\begin{proof}
The lemma is well-known but we give a proof here for the convenience of the reader. 
Let $\overline{I}$ denote the integral closure of $I$. Take a minimal reduction $Q$ of $I$. 
Then since $Q$ is also a minimal reduction of $\overline{I}$ and $\overline{I}^2=Q\overline{I}$, 
we have 
\[
 I \subset \overline{I} \subset Q \colon \overline{I} \subset Q \colon I.
\]
The Matlis duality theorem implies that 
\[
  e_I^0(A) = \ell_A(A/Q) = \ell_A(A/I)+\ell_A(I/Q) \le \ell_A(A/I)+\ell_A(Q\colon I/Q)=2 \cdot \ell_A(A/I),
\]
and equality holds if and only if $I=Q\colon I$, that is, $I$ is a good ideal. 
\end{proof}

\par
Almost all of the maximal Cohen--Macaulay $A$-modules over a hypersurface of multiplicity $2$ 
can be regarded as Ulrich modules in the classical sense. 

\begin{lem}[\textrm{cf. \cite[Corollary 1.4]{HKuh}}] \label{HKuh}
Let $A$ be a hypersurface local domain of $e_{\fkm}^0(A)=2$. 
Then every maximal Cohen--Macaulay $A$-module without free summands satisfies 
$\mu_A(M)=e_{\fkm}^0(M)=2 \cdot \rank_A M$, that is, $M$ is an Ulrich $A$-module 
with respect to $\fkm$. 
\end{lem}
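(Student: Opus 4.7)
The plan is to combine Eisenbud's matrix factorization theorem with the elementary bound $\mu_A(N) \le e_\fkm^0(N)$ for maximal Cohen--Macaulay modules, applied simultaneously to $M$ and to its first syzygy. When $e_\fkm^0(A) = 2$, the two inequalities become a squeeze which forces equality.

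I would first write $A = S/(f)$ with $(S,\fkn)$ a regular local ring; the condition $e_\fkm^0(A)=2$ becomes $f \in \fkn^2\setminus\fkn^3$. By Eisenbud's matrix factorization theorem, the module $M$ (having no free summand) is the cokernel of a reduced matrix factorization $\phi, \psi \colon S^n \to S^n$ with entries in $\fkn$ and $\phi\psi = \psi\phi = fI_n$; the two matrices must be square of the same size $n$ because each becomes split injective and split surjective upon inverting $f$, and this common size is $n = \mu_A(M)$. The induced minimal free $A$-resolution is $2$-periodic,
\[
\cdots \to A^n \xrightarrow{\psi} A^n \xrightarrow{\phi} A^n \to M \to 0,
\]
so $\mu_A(\Syz^1_A M) = n$ as well. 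From the short exact sequence $0 \to \Syz^1_A M \to A^n \to M \to 0$ and the fact that $A$ is a domain, additivity of rank gives $\rank_A M + \rank_A \Syz^1_A M = n$.

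Next (after harmlessly passing to $A(X) = A[X]_{\fkm[X]}$ if the residue field is finite) I would invoke the standard inequality: for any maximal Cohen--Macaulay $A$-module $N$ and any minimal reduction $Q \subset \fkm$ generated by a system of parameters,
\[
\mu_A(N) = \ell_A(N/\fkm N) \le \ell_A(N/QN) = e_Q^0(N) = e_\fkm^0(N) = e_\fkm^0(A)\cdot \rank_A N = 2\,\rank_A N,
\]
where the penultimate equality uses the associativity formula over the domain $A$. Applying this to both $M$ and $\Syz^1_A M$ yields $n \le 2\rank_A M$ and $n \le 2(n - \rank_A M)$, which together force $\rank_A M = n/2$. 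Consequently $\mu_A(M) = n = 2\rank_A M = e_\fkm^0(M)$, and freeness of $M/\fkm M$ over the field $A/\fkm$ is automatic, so all three defining conditions for an Ulrich $A$-module with respect to $\fkm$ are satisfied. There is no serious obstacle; the hypersurface hypothesis enters twice, once to guarantee the squareness of the matrix factorization (so that $\mu_A(\Syz^1_A M) = \mu_A(M)$), and once through $e_\fkm^0(A) = 2$, which makes the elementary bound $\mu \le 2\rank$ tight enough that the two-sided squeeze collapses into an equality.
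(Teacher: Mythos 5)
Your argument is correct and complete. Note, however, that the paper does not actually prove this lemma: it is stated with a bare citation to Herzog--K\"uhl \cite[Corollary 1.4]{HKuh}, so there is no in-text proof to compare against. What you have written is a self-contained derivation of that cited fact, and it is essentially the standard one: Eisenbud's matrix factorization theorem forces the minimal free resolution of $M$ to be $2$-periodic with constant Betti number $n=\mu_A(M)$, rank additivity over the domain $A$ gives $\rank_A M+\rank_A\Syz_A^1(M)=n$, and the elementary bound $\mu_A(N)\le e_{\fkm}^0(N)=2\rank_A N$ for maximal Cohen--Macaulay $N$ (valid since $\Syz_A^1(M)$ is again maximal Cohen--Macaulay by the depth lemma, and nonzero because $M$ is not free) applied to both $M$ and its syzygy squeezes $n=2\rank_A M$. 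All the individual steps check out: the squareness of a reduced matrix factorization, the identification of $n$ with $\mu_A(M)$, the chain $\mu_A(N)\le\ell_A(N/QN)=e_Q^0(N)=e_{\fkm}^0(N)=e_{\fkm}^0(A)\rank_A N$ via a minimal reduction (with the $A(X)$ trick for finite residue fields), and the automatic freeness of $M/\fkm M$ over the field $A/\fkm$. The only point worth flagging is cosmetic: if ``hypersurface'' is meant analytically rather than as an honest presentation $A=S/(f)$, one should pass to the completion, where multiplicity and minimal numbers of generators are preserved; in the setting of this paper the rings are complete anyway, so nothing is lost.
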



\par 
Any two-dimensional rational double point 
$A$ can be regarded as an invariant subring $B^G$, 
where $B=k[[s,t]]$ is a formal power series ring over $k$, 
and $G$ is a finite subgroup of $\mathrm{SL}(2,k)$. 
Thus we can apply the so-called McKay correspondence, 
which is a special case of `special McKay correspondence' (see Section \ref{Sp}). 

\begin{lem}[\textbf{McKay Correspondence}] \label{McKay}
Let $A=B^G$ as above. Then$:$
\begin{enumerate}
 \item[$(1)$]  The ring $A$ is of finite CM-representation type. 
Let $\{M_i\}_{i=0}^r$ be the set of isomorphism classes of indecomposable 
maximal Cohen--Macaulay $A$-modules, where $M_0=A$. 
Then $B \cong \bigoplus_{i=0}^r M_i^{\oplus n_i}$, where $n_i = \rank_A M_i$.  
 \item[$(2)$]  The fundamental cycle is given by $Z_0=\sum_{j=1}^r n_j E_j$ so that 
if we choose indices suitably, then 
$c_1(\widetilde{M_i})E_j = \delta_{ij}$ for $1 \le i,j \le r$, where 
$c_1(*)$ denotes the Chern class and $\widetilde{M_i} = \varphi^{*}(M_i)/\text{torsion}$.
In particular, $M_i$ is a special Cohen--Macaulay $A$-module 
(with respect to $\frm$) for every $i=1,2,\ldots,r$. 
\end{enumerate}
\end{lem}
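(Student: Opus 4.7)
The plan is to establish part (1) via the Auslander correspondence for small subgroups of $\mathrm{GL}$, and part (2) by combining the special McKay correspondence of Section \ref{Sp} with Kato's Riemann--Roch formula and the Ulrich property of the maximal ideal in a rational double point.

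For part (1), I would first observe that $G\subset \mathrm{SL}(2,k)$ acts without pseudo-reflections on $V=ks\oplus kt$, so by Auslander's classical theorem on invariant subrings the assignment
\[
V_i\;\longmapsto\; M_i:=\Hom_{k[G]}(V_i,B)\;\cong\;(V_i^{\ast}\otimes_k B)^G
\]
gives a bijection between isomorphism classes of irreducible $k[G]$-modules $V_0=k,V_1,\dots,V_r$ and isomorphism classes of indecomposable maximal Cohen--Macaulay $A$-modules $M_0=A,M_1,\dots,M_r$. In particular $A$ has finite CM-representation type. The isotypic decomposition of $B$ as a $k[G]$-module yields
\[
B \;=\;\bigoplus_{i=0}^r V_i\otimes_k\Hom_{k[G]}(V_i,B) \;\cong\; \bigoplus_{i=0}^r M_i^{\oplus \dim V_i}
\]
as $A$-modules. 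Since $G$ acts freely away from the origin of $\Spec B$, localizing at the generic point of $A$ produces $B\otimes_A\mathrm{Frac}(A)\cong \mathrm{Frac}(A)[G]$, the regular representation; hence $\rank_A M_i=\dim V_i$. Setting $n_i:=\dim V_i$ gives the decomposition asserted in (1).

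For part (2), since $A$ is Gorenstein, every maximal Cohen--Macaulay $A$-module $M$ satisfies $\Ext_A^1(M,A)=0$, and so is a special Cohen--Macaulay $A$-module in Wunram's sense (Remark \ref{Special-Wunram}). The special McKay correspondence (the Theorem--Definition opening Section \ref{Sp}) then produces, for each exceptional curve $E_j$, a unique indecomposable non-free MCM $A$-module $N_j$ with $c_1(\widetilde{N_j})\cdot E_{j'}=\delta_{jj'}$. These $N_j$ are pairwise non-isomorphic indecomposable non-free MCM modules, so after relabeling $\{M_1,\dots,M_r\}$ I may assume $M_i=N_i$ for $i=1,\dots,r$, which gives the Chern class identity and simultaneously shows that each $M_i$ is a special Cohen--Macaulay module. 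To identify the coefficients of $Z_0=\sum_{j=1}^r c_j E_j$, I would apply Kato's Riemann--Roch formula (Lemma \ref{RR}) with $M=M_i$ and $Z=Z_0$, obtaining
\[
\ell_A(M_i/\fkm M_i) \;=\; n_i\cdot\ell_A(A/\fkm)+c_1(\widetilde{M_i})\cdot Z_0 \;=\; n_i+c_i.
\]
On the other hand, a rational double point is a hypersurface of multiplicity $2$, so it has maximal embedding dimension and its maximal ideal $\fkm$ is Ulrich (Example \ref{ex-mUlrich}); by Lemma \ref{HKuh} the indecomposable non-free MCM module $M_i$ is an Ulrich module with respect to $\fkm$, giving $\mu_A(M_i)=\ell_A(M_i/\fkm M_i)=2\cdot\rank_A M_i=2n_i$. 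Comparing the two computations forces $c_i=n_i$, completing (2).

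The principal obstacle is the Auslander correspondence underlying part (1), which is a substantial representation-theoretic input and will be invoked as a black box. Once it is in hand, the remainder of the argument assembles three tools already available in the excerpt (the special McKay correspondence, Kato's Riemann--Roch formula, and Lemma \ref{HKuh}), so no new difficulty arises.
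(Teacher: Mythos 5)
The paper offers no proof of Lemma \ref{McKay}: it is quoted as the classical McKay correspondence (McKay, Gonzalez-Sprinberg--Verdier, Artin--Verdier, Esnault--Kn\"orrer), so there is no ``paper's proof'' to match yours against. Taken on its own, your assembly is correct. Part (1) is exactly Auslander's theorem for the small subgroup $G\subset\mathrm{SL}(2,k)$ together with the isotypic decomposition of $B$ and the normal basis theorem at the generic point, all of which is standard and which you rightly treat as a black box. Part (2) is where your route is genuinely your own and is rather pleasant: instead of quoting the rank statement $\rank_A M_i=\coeff_{E_i}Z_0$ built into the Theorem--Definition, you rederive the coefficients of $Z_0$ by combining Kato's Riemann--Roch (Lemma \ref{RR}) applied to $Z_0$ with the fact that each non-free indecomposable $M_i$ over a multiplicity-two hypersurface is Ulrich with respect to $\fkm$ (Lemma \ref{HKuh}), so that $n_i+c_i=\ell_A(M_i/\fkm M_i)=\mu_A(M_i)=2n_i$. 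That computation is valid and is in the spirit of how the paper uses these tools later (e.g.\ Lemma \ref{Key-Sp}). The one step you pass over quickly is the identification $\{M_1,\dots,M_r\}=\{N_1,\dots,N_r\}$: the Theorem--Definition as quoted only gives existence and uniqueness of the $N_j$, and to conclude that they \emph{exhaust} the non-free indecomposables you need the other half of Wunram's theorem --- that every indecomposable non-free special module (hence, over a Gorenstein rational singularity, every indecomposable non-free MCM module, by Remark \ref{Special-Wunram}) arises as some $N_j$. This is true and is implicitly assumed in Definition \ref{SpCMdef}, but it deserves an explicit citation rather than the phrase ``so after relabeling.'' With that reference added, the proof is complete.
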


\par
We are now ready to prove Theorem  \ref{SPvsUlrich-RDP}.

\begin{proof}[Proof of Theorem \ref{SPvsUlrich-RDP}]
$(1) \Longrightarrow (2):$ 
Since $M$ is an Ulrich $A$-module with respect to $I$, it has no free summands
because no free module is an Ulrich $A$-module with respect to $I$. 
Thus $M$ is an Ulrich $A$-module  with respect to $\fkm$ by Lemma \ref{HKuh}
and it is also a special Cohen--Macaulay $A$-module with respect to $\fkm$ 
by Lemma \ref{McKay}. 
Hence $M$ is a special Cohen--Macaulay $A$-module with respect to $I$ 
because $M/IM$ is $A/I$-free. 

\par 
 $(2) \Longrightarrow (3):$ See Lemma \ref{WSpvsSp}. 

\par 
 $(3) \Longrightarrow (4):$ Trivial. 
 
\par 
 $(4) \Longrightarrow (1):$
By Lemma \ref{HKuh}, $M$ is a weakly special Cohen--Macaulay $A$-module with respect to $I$. 
Note that $e_I^0(A) \le 2 \cdot \ell_A(A/I)$ by Lemma \ref{Stable-known}. 
By a similar argument as in the proof of Theorem \ref{UvsWsp}, we have 
\[
\ell_A(M/IM)=e_I^0(M) \quad \text{and} \quad e_I^0(A)=2 \cdot \ell_A(A/I), 
\]
whence $M$ is an Ulrich $A$-module with respect to $I$ and $I$ is a good ideal. 
\par 
Since $A$ is a hypersurface local domain, Proposition \ref{UvsWspideal} implies that 
$I$ is an Ulrich ideal. In particular, $A/I$ is Gorenstein. 
Thus applying Theorem  \ref{Dual} yields $M^{*} \cong \Syz_A^1(M)$ is also an Ulrich $A$-module 
with respect to $I$. 
\end{proof}

\par
The corollary below follows from Proposition \ref{UvsWspideal} 
and Theorem \ref{SPvsUlrich-RDP}. 

\begin{cor}
Assume that $A$ is a rational double point of dimension $2$. 
Let $I$ be an $\fkm$-primary ideal. 
Then the following conditions are equivalent. 
\begin{enumerate}
 \item[$(1)$]  $I$ is an Ulrich ideal. 
 \item[$(2)$]  $I$ is a special ideal. 
 \item[$(3)$]  $I$ is a weakly special ideal. 
 \item[$(4)$] There exist an Ulrich $A$-module with respect to $I$. 
\end{enumerate}
\end{cor}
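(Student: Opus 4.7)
The plan is to arrange the four conditions in a cycle $(1) \Rightarrow (4) \Rightarrow (2) \Rightarrow (3) \Rightarrow (1)$ and appeal to results already in hand. For $(1) \Rightarrow (4)$, if $I$ is a nonparameter Ulrich ideal, then Theorem \ref{Syz} supplies an Ulrich $A$-module with respect to $I$, for instance $\Syz_A^2(A/I)$. For $(4) \Rightarrow (2)$, suppose $M$ is an Ulrich $A$-module with respect to $I$; Theorem \ref{SPvsUlrich-RDP} asserts simultaneously that $M$ is a special Cohen--Macaulay $A$-module with respect to $I$ and that $I$ itself is an Ulrich ideal. Since an Ulrich ideal of a Gorenstein local ring is good in the sense of Definition \ref{Gorgood} (by Proposition \ref{Uchar}), and the two notions of ``good'' coincide for two-dimensional Gorenstein rational singularities (see the remark after Definition \ref{Ratgood}), $I$ qualifies as a special ideal in the sense of Definition \ref{Spideal}.

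The implication $(2) \Rightarrow (3)$ is immediate from Lemma \ref{WSpvsSp}(1), together with the same observation that a good ideal in the sense of Definition \ref{Ratgood} is good, hence stable, in the sense of Definition \ref{Gorgood} for our ring $A$. Finally, for $(3) \Rightarrow (1)$: if $I$ is weakly special, then by definition $I$ is stable and admits a weakly special Cohen--Macaulay module $M$; Theorem \ref{UvsWsp} promotes $M$ to an Ulrich $A$-module with respect to $I$ and shows that $I$ is a good ideal, and then Proposition \ref{UvsWspideal}, which applies because $A$ is a hypersurface local domain, forces $I$ to be an Ulrich ideal.

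There is no substantive obstacle here: the module-level Theorem \ref{SPvsUlrich-RDP} already encodes the central content (the equivalence of the Ulrich, special, and weakly special conditions on $M$ over a rational double point), so this corollary amounts to translating module-level conclusions into ideal-level ones and verifying the compatibility of the two notions of ``good'' ideal. The only point demanding care is that bookkeeping, which is handled by Proposition \ref{Uchar} and the remark after Definition \ref{Ratgood}.
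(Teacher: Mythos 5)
Your proof is correct and follows essentially the same route as the paper, which disposes of this corollary in one line by citing Proposition \ref{UvsWspideal} and Theorem \ref{SPvsUlrich-RDP} --- precisely the two results carrying the weight in your cycle of implications. The extra bookkeeping you supply (Theorem \ref{Syz} for $(1)\Rightarrow(4)$, Lemma \ref{WSpvsSp} for $(2)\Rightarrow(3)$, and the compatibility of the two notions of good ideal via Proposition \ref{Uchar} and the remark after Definition \ref{Ratgood}) is exactly what the paper leaves implicit, and it checks out.
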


In the rest of this section, we classify all Ulrich ideals and Ulrich modules 
over rational double points of dimension $2$ 
using the results in the previous section. 

\par 
Let $\{M_i\}_{i=0}^r$ be the set of indecomposable 
maximal Cohen--Macaulay $A$-modules so that 
$M_0=A$ and 
$c_1(\widetilde{M_i})E_j = \delta_{ij}$ for all $1 \le i,j \le r$.  
\par
Now suppose that $M$ is  an Ulrich $A$-module with respect to $I$. 
Then $M$ is a finite direct sum of $M_1,\ldots,M_r$: 
\[
 M \cong M_1^{\oplus k_1} \oplus \cdots \oplus M_r^{\oplus k_r}. 
\] 
because $M$ has no free summands. 
Whenever $k_i > 0$, $M_i$ must be an Ulrich $A$-module with respect to $I$. 
Hence  it suffices to characterize $M_i$ that is an Ulrich $A$-module 
with respect to $I$.  
On the other hand, Theorem \ref{SPvsUlrich-RDP} implies that 
$I$ is an Ulrich ideal and whence $I$ is a special ideal. 
Thus those ideals $I$ (or cycles $Z$) are determined by Theorem \ref{Spcycle}.
Moreover, it is not difficult to determine all $M_i$ that is an Ulrich module 
with respect to $I_Z$ by Theorem \ref{SPvsUlrich-RDP}.  

\par 
Let $I$ be a good ideal of $A$ and let $Z$ be an anti-nef cycle 
on the minimal resolution $X$ such that $I\mathcal{O}_X = \mathcal{O}_X(-Z)$ and 
$I=\H^0(X,\mathcal{O}_X(-Z))$, that is, $I=I_Z$. 
Then we call $Z$ an \textit{Ulrich cycle} 
if $I$ is an Ulrich ideal. 
Note that $Z$ is an Ulrich cycle if and only if it is a special cycle. 
\par 
Now let us illustrate the main theorem by the following example. 
Let $Z=2E_1+3E_2+4E_3+3E_4+2E_5+2E_6$ be an Ulrich cycle of a rational 
double point $A=k[[x,y,z]]/(x^3+y^4+z^2)$, 
and put $I=\H^0(X,\mathcal{O}_X(-Z))$.  
Then since $Z$ is an anti-nef cycle   
on the minimal resolution $X \to \Spec A$ with support
in $E=\bigcup_{i=1}^6 E_i$, $Z$ can be described as follows$:$   

\par \vspace{6mm}
\begin{picture}(400,35)(-20,0)
    \thicklines
\put(-10,10){$Z=$}
  \put(25,18){{\tiny $2$}}
  \put(25,0){{\tiny $E_1$}}
\put(30,12){\circle{8}}
\put(35,12){\line(1,0){20}}
  \put(55,18){{\tiny $3$}}
  \put(55,0){{\tiny $E_2$}}
\put(60,12){\circle{8}}
\put(65,12){\line(1,0){20}}
  \put(83,18){{\tiny $4$}}
  \put(83,0){{\tiny $E_3$}}
\put(90,12){\circle{8}}
\put(95,12){\line(1,0){20}}
  \put(115,18){{\tiny $3$}}
  \put(115,0){{\tiny $E_4$}}
\put(120,12){\circle{8}}
\put(125,12){\line(1,0){20}}
  \put(145,18){{\tiny $2$}}
  \put(145,0){{\tiny $E_5$}}
\put(150,12){\circle{8}}
\put(90,16){\line(0,1){15}}
  \put(83,38){{\tiny $2$}}
  \put(95,26){{\tiny $E_6$}}
\put(90,34){\circle{8}}
\end{picture}

\par \vspace{2mm} \par \noindent
Furthermore, 
by Theorem \ref{Spcycle}(2)  $M_i$ is an Ulrich $A$-module with respect to $I$ if and only if 
$i=1$ or $5$ because $Z_0 = E_1+2E_2+3E_3+ 2E_4+E_5+2E_6$ and 
$\ell_A(A/I)=2$. 
In other words, any Ulrich $A$-module with respect to $I$ is given by 
$M \cong M_1^{\oplus a} \oplus M_5^{\oplus b}$ for some integers $a,b \ge 0$. 
We can describe this by the following picture. 
 
\par \vspace{3mm}
\begin{picture}(400,35)(-20,0)
    \thicklines
\put(-10,10){$Z=$}
  \put(25,18){{\tiny $2$}}
  \put(25,0){{\tiny $E_1$}}
\put(30,12){\circle*{8}}
\put(35,12){\line(1,0){20}}
  \put(55,18){{\tiny $3$}}
  \put(55,0){{\tiny $E_2$}}
\put(60,12){\circle{8}}
\put(65,12){\line(1,0){20}}
  \put(83,18){{\tiny $4$}}
  \put(83,0){{\tiny $E_3$}}
\put(90,12){\circle{8}}
\put(95,12){\line(1,0){20}}
  \put(115,18){{\tiny $3$}}
  \put(115,0){{\tiny $E_4$}}
\put(120,12){\circle{8}}
\put(125,12){\line(1,0){20}}
  \put(145,18){{\tiny $2$}}
  \put(145,0){{\tiny $E_5$}}
\put(150,12){\circle*{8}}
\put(90,16){\line(0,1){15}}
  \put(83,38){{\tiny $2$}}
  \put(95,26){{\tiny $E_6$}}
\put(90,34){\circle{8}}
\end{picture}

\par \vspace{4mm}
We are now ready to state the main theorem in this section. 

\begin{thm} \label{Main-RDP}
Let $A$ is a two-dimensional rational double point.  
Let $\varphi \colon X \to \Spec A$ be the minimal resolution of 
singularities with $E=\varphi^{-1}(\fkm) = \bigcup_{i=1}^r E_i$, the 
exceptional divisor on $X$. 
Then all Ulrich cycles $Z_k$ of $A$ and all  
indecomposable Ulrich $A$-modules with respect to 
$I_k=\H^0(X,\mathcal{O}_X(-Z_k))$ 
are given by the following$:$
\begin{flushleft}
$\bullet$ $(A_n)$ $x^2 + y^{n+1}+z^2$
\end{flushleft}
\par \noindent
When $n=2m$, the complete list of all Ulrich cycles 
is given by the following$:$ 
\par 
\vspace{4mm}
\begin{picture}(400,20)(-20,0)
    \thicklines
\put(-10,10){$Z_k=$}
\put(25,18){{\tiny $1$}}
\put(30,12){\circle{8}}
\put(35,12){\line(1,0){20}}
\put(55,18){{\tiny $2$}}
\put(60,12){\circle{8}}
\put(65,12){\line(1,0){15}}
\put(84,10){$\cdots$}
\put(100,12){\line(1,0){15}}
\put(115,18){{\tiny $k$}}
\put(120,12){\circle{8}}
\put(125,12){\line(1,0){20}}
\put(140,18){{\tiny $k+1$}}
\put(150,12){\circle*{8}}
\put(155,12){\line(1,0){20}}
\put(170,18){{\tiny $k+1$}}
\put(180,12){\circle*{8}}
\put(185,12){\line(1,0){15}}
\put(204,10){$\cdots$}
\put(220,12){\line(1,0){15}}
\put(230,18){{\tiny $k+1$}}
\put(240,12){\circle*{8}}
\put(245,12){\line(1,0){20}}
\put(260,18){{\tiny $k+1$}}
\put(270,12){\circle*{8}}
\put(275,12){\line(1,0){20}}
\put(295,18){{\tiny $k$}}
\put(300,12){\circle{8}}
\put(305,12){\line(1,0){15}}
\put(324,10){$\cdots$}
\put(340,12){\line(1,0){15}}
\put(355,18){{\tiny $2$}}
\put(360,12){\circle{8}}
\put(365,12){\line(1,0){20}}
\put(385,18){{\tiny $1$}}
\put(390,12){\circle{8}}
\put(148,6){$\underbrace{\phantom{aaaaaaaaaaaaaaaaaaaa}}$}
\put(198,-10){{\tiny $n-2k$}}
\end{picture}
\par \vspace{4mm}
for $k=0,1,\cdots,m-1(=\frac{n}{2}-1)$. 
Then $\ell_A(A/I_k)=k+1$ for each $k=0,1,\ldots,m-1$. 

\par \vspace{4mm}\par \noindent
When $n=2m+1$, the complete list of all Ulrich cycles 
is given by the following$:$ 
\par 
\vspace{4mm}
\begin{picture}(400,20)(-20,0)
    \thicklines
\put(-10,10){$Z_k=$}
\put(25,18){{\tiny $1$}}
\put(30,12){\circle{8}}
\put(35,12){\line(1,0){20}}
\put(55,18){{\tiny $2$}}
\put(60,12){\circle{8}}
\put(65,12){\line(1,0){15}}
\put(84,10){$\cdots$}
\put(100,12){\line(1,0){15}}
\put(115,18){{\tiny $k$}}
\put(120,12){\circle{8}}
\put(125,12){\line(1,0){20}}
\put(140,18){{\tiny $k+1$}}
\put(150,12){\circle*{8}}
\put(155,12){\line(1,0){20}}
\put(170,18){{\tiny $k+1$}}
\put(180,12){\circle*{8}}
\put(185,12){\line(1,0){15}}
\put(204,10){$\cdots$}
\put(220,12){\line(1,0){15}}
\put(230,18){{\tiny $k+1$}}
\put(240,12){\circle*{8}}
\put(245,12){\line(1,0){20}}
\put(260,18){{\tiny $k+1$}}
\put(270,12){\circle*{8}}
\put(275,12){\line(1,0){20}}
\put(295,18){{\tiny $k$}}
\put(300,12){\circle{8}}
\put(305,12){\line(1,0){15}}
\put(324,10){$\cdots$}
\put(340,12){\line(1,0){15}}
\put(355,18){{\tiny $2$}}
\put(360,12){\circle{8}}
\put(365,12){\line(1,0){20}}
\put(385,18){{\tiny $1$}}
\put(390,12){\circle{8}}
\put(148,6){$\underbrace{\phantom{aaaaaaaaaaaaaaaaaaaa}}$}
\put(198,-10){{\tiny $n-2k$}}
\end{picture}
\par \vspace{4mm}
for $k=0,1,\cdots,m(=\frac{n-1}{2})$. 
Then $\ell_A(A/I_k)=k+1$ for each $k=0,1,\ldots,m$. 
\vspace{4mm}
\begin{flushleft}
$\bullet$ $(D_n)$ $x^2y + y^{n-1}+z^2$ $(n \ge 4)$
\end{flushleft}
\par \noindent
When $n=2m$, the complete list of all Ulrich cycles 
is given by the following$:$ 
\par 
\vspace{5mm}
\begin{picture}(400,30)(-20,0)
    \thicklines
\put(-10,10){$Z_k=$}
  \put(25,18){{\tiny $1$}}
\put(30,12){\circle{8}}
\put(35,12){\line(1,0){20}}
  \put(55,18){{\tiny $2$}}
\put(60,12){\circle{8}}
\put(65,12){\line(1,0){20}}
  \put(85,18){{\tiny $3$}}
\put(90,12){\circle{8}}
\put(95,12){\line(1,0){15}}
\put(114,10){$\cdots$}
\put(130,12){\line(1,0){15}}
  \put(140,18){{\tiny $2k+2$}}
\put(150,12){\circle*{8}}
\put(155,12){\line(1,0){15}}
\put(174,10){$\cdots$}
\put(190,12){\line(1,0){15}}
  \put(195,18){{\tiny $2k+2$}}
\put(210,12){\circle*{8}}
\put(215,14){\line(1,1){11}}
\put(215,10){\line(1,-1){11}}
\put(230,26){\circle*{8}}
\put(230,-2){\circle*{8}}
  \put(226,32){{\tiny $k+1$}}
  \put(226,4){{\tiny $k+1$}}
\put(145,6){$\underbrace{\phantom{aaaaaaaaaaa}}$}
\put(158,-10){{\tiny $n-2k-3$}}
\end{picture}
\par \vspace{4mm}
for $k=0,1,\ldots,m-2(=\frac{n-4}{2})$. 
\par \vspace{8mm}
\begin{picture}(400,30)(-20,0)
    \thicklines
\put(-10,10){$Z_{m-1}=$}
  \put(35,18){{\tiny $1$}}
\put(40,12){\circle{8}}
\put(45,12){\line(1,0){20}}
  \put(65,18){{\tiny $2$}}
\put(70,12){\circle{8}}
\put(75,12){\line(1,0){20}}
  \put(95,18){{\tiny $3$}}
\put(100,12){\circle{8}}
\put(105,12){\line(1,0){15}}
\put(124,10){$\cdots$}
\put(140,12){\line(1,0){15}}
  \put(145,18){{\tiny $2m-3$}}
\put(160,12){\circle{8}}
\put(165,12){\line(1,0){25}}
  \put(177,18){{\tiny $2m-2$}}
\put(195,12){\circle{8}}
\put(200,14){\line(1,1){11}}
\put(200,10){\line(1,-1){11}}
\put(215,26){\circle{8}}
\put(215,-2){\circle*{8}}
  \put(211,32){{\tiny $m$}}
  \put(211,4){{\tiny $m-1$}}
\end{picture}
\par \vspace{8mm}
\begin{picture}(400,30)(-20,0)
    \thicklines
\put(-10,10){$Z_{m-1}'=$}
  \put(35,18){{\tiny $1$}}
\put(40,12){\circle{8}}
\put(45,12){\line(1,0){20}}
  \put(65,18){{\tiny $2$}}
\put(70,12){\circle{8}}
\put(75,12){\line(1,0){20}}
  \put(95,18){{\tiny $3$}}
\put(100,12){\circle{8}}
\put(105,12){\line(1,0){15}}
\put(124,10){$\cdots$}
\put(140,12){\line(1,0){15}}
  \put(145,18){{\tiny $2m-3$}}
\put(160,12){\circle{8}}
\put(165,12){\line(1,0){25}}
  \put(177,18){{\tiny $2m-2$}}
\put(195,12){\circle{8}}
\put(200,14){\line(1,1){11}}
\put(200,10){\line(1,-1){11}}
\put(215,26){\circle*{8}}
\put(215,-2){\circle{8}}
  \put(211,32){{\tiny $m-1$}}
  \put(211,4){{\tiny $m$}}
\end{picture}
\par \vspace{8mm}
\begin{picture}(400,30)(-20,0)
    \thicklines
\put(-10,10){$Z_2'=$}
  \put(35,18){{\tiny $2$}}
\put(40,12){\circle*{8}}
\put(45,12){\line(1,0){20}}
  \put(65,18){{\tiny $2$}}
\put(70,12){\circle{8}}
\put(75,12){\line(1,0){20}}
  \put(95,18){{\tiny $2$}}
\put(100,12){\circle{8}}
\put(105,12){\line(1,0){15}}
\put(124,10){$\cdots$}
\put(140,12){\line(1,0){15}}
  \put(155,18){{\tiny $2$}}
\put(160,12){\circle{8}}
\put(165,12){\line(1,0){25}}
  \put(187,18){{\tiny $2$}}
\put(195,12){\circle{8}}
\put(200,14){\line(1,1){11}}
\put(200,10){\line(1,-1){11}}
\put(215,26){\circle{8}}
\put(215,-2){\circle{8}}
  \put(211,32){{\tiny $1$}}
  \put(211,4){{\tiny $1$}}
\end{picture}
\par \vspace{4mm}  \par \noindent
Then $\ell_A(A/I_k)=k+1$ for each $k=0,1,\ldots,m-2$, 
$\ell_A(A/I_{m-1})=\ell_A(A/I_{m-1}')=m$ and   
$\ell_A(A/I_2')=2$, where $m=\frac{n}{2}$.

\par \vspace{5mm} 
\par \noindent
When $n=2m+1$, the complete list of all Ulrich cycles 
is given by the following$:$ 
\par 
\vspace{5mm}
\begin{picture}(400,30)(-20,0)
    \thicklines
\put(-10,10){$Z_k=$}
  \put(25,18){{\tiny $1$}}
\put(30,12){\circle{8}}
\put(35,12){\line(1,0){20}}
  \put(55,18){{\tiny $2$}}
\put(60,12){\circle{8}}
\put(65,12){\line(1,0){20}}
  \put(85,18){{\tiny $3$}}
\put(90,12){\circle{8}}
\put(95,12){\line(1,0){15}}
\put(114,10){$\cdots$}
\put(130,12){\line(1,0){15}}
  \put(140,18){{\tiny $2k+2$}}
\put(150,12){\circle*{8}}
\put(155,12){\line(1,0){15}}
\put(174,10){$\cdots$}
\put(190,12){\line(1,0){15}}
  \put(195,18){{\tiny $2k+2$}}
\put(210,12){\circle*{8}}
\put(215,14){\line(1,1){11}}
\put(215,10){\line(1,-1){11}}
\put(230,26){\circle*{8}}
\put(230,-2){\circle*{8}}
  \put(226,32){{\tiny $k+1$}}
  \put(226,4){{\tiny $k+1$}}
\put(145,6){$\underbrace{\phantom{aaaaaaaaaaa}}$}
\put(158,-10){{\tiny $n-2k-3$}}
\end{picture}
\par \vspace{4mm}
for $k=0,1,\ldots,m-2(=\frac{n-5}{2})$. 

\par 
\vspace{5mm}
\begin{picture}(400,30)(-20,0)
    \thicklines
\put(-14,10){$Z_{m-1}=$}
  \put(25,18){{\tiny $1$}}
\put(30,12){\circle{8}}
\put(35,12){\line(1,0){20}}
  \put(55,18){{\tiny $2$}}
\put(60,12){\circle{8}}
\put(65,12){\line(1,0){20}}
  \put(85,18){{\tiny $3$}}
\put(90,12){\circle{8}}
\put(95,12){\line(1,0){15}}
\put(114,10){$\cdots$}
\put(130,12){\line(1,0){15}}
\put(150,12){\circle{8}}
\put(155,12){\line(1,0){15}}
\put(160,18){{\tiny $2m-2$}}
\put(175,12){\circle{8}}
\put(180,12){\line(1,0){25}}
  \put(194,18){{\tiny $2m-1$}}
\put(210,12){\circle{8}}
\put(215,14){\line(1,1){11}}
\put(215,10){\line(1,-1){11}}
\put(230,26){\circle*{8}}
\put(230,-2){\circle*{8}}
  \put(226,32){{\tiny $m$}}
  \put(226,4){{\tiny $m$}}
\end{picture}

\par \vspace{8mm}
\begin{picture}(400,30)(-20,0)
    \thicklines
\put(-10,10){$Z_{2}'=$}
  \put(35,18){{\tiny $2$}}
\put(40,12){\circle*{8}}
\put(45,12){\line(1,0){20}}
  \put(65,18){{\tiny $2$}}
\put(70,12){\circle{8}}
\put(75,12){\line(1,0){20}}
  \put(95,18){{\tiny $2$}}
\put(100,12){\circle{8}}
\put(105,12){\line(1,0){15}}
\put(124,10){$\cdots$}
\put(140,12){\line(1,0){15}}
  \put(155,18){{\tiny $2$}}
\put(160,12){\circle{8}}
\put(165,12){\line(1,0){25}}
  \put(187,18){{\tiny $2$}}
\put(195,12){\circle{8}}
\put(200,14){\line(1,1){11}}
\put(200,10){\line(1,-1){11}}
\put(215,26){\circle{8}}
\put(215,-2){\circle{8}}
  \put(211,32){{\tiny $1$}}
  \put(211,4){{\tiny $1$}}
\end{picture}
\par \vspace{4mm}  \par \noindent
Then $\ell_A(A/I_k)=k+1$ for each $k=0,1,\ldots,m-1$, 
and $\ell_A(A/I_2')=2$.

\vspace{4mm}
\begin{flushleft}
$\bullet$ $(E_6)$ $x^3 + y^4+z^2$ 
\end{flushleft}

\par \vspace{2mm}
The Ulrich cycles of $A$ are the following $Z_0$ and $Z_1$ with 
$\ell_A(A/I_{k})=k+1$ for each $k=0,1$:  
\par \vspace{5mm}
\begin{picture}(200,35)(-20,0)
    \thicklines
\put(-10,10){$Z_0=$}
  \put(25,18){{\tiny $1$}}
\put(30,12){\circle*{8}}
\put(35,12){\line(1,0){20}}
  \put(55,18){{\tiny $2$}}
\put(60,12){\circle*{8}}
\put(65,12){\line(1,0){20}}
  \put(83,18){{\tiny $3$}}
\put(90,12){\circle*{8}}
\put(95,12){\line(1,0){20}}
  \put(115,18){{\tiny $2$}}
\put(120,12){\circle*{8}}
\put(125,12){\line(1,0){20}}
  \put(145,18){{\tiny $1$}}
\put(150,12){\circle*{8}}
\put(90,16){\line(0,1){15}}
  \put(83,38){{\tiny $2$}}
\put(90,34){\circle*{8}}
\end{picture}
\begin{picture}(200,35)(-20,0)
    \thicklines
\put(-10,10){$Z_1=$}
  \put(25,18){{\tiny $2$}}
\put(30,12){\circle*{8}}
\put(35,12){\line(1,0){20}}
  \put(55,18){{\tiny $3$}}
\put(60,12){\circle{8}}
\put(65,12){\line(1,0){20}}
  \put(83,18){{\tiny $4$}}
\put(90,12){\circle{8}}
\put(95,12){\line(1,0){20}}
  \put(115,18){{\tiny $3$}}
\put(120,12){\circle{8}}
\put(125,12){\line(1,0){20}}
  \put(145,18){{\tiny $2$}}
\put(150,12){\circle*{8}}
\put(90,16){\line(0,1){15}}
  \put(83,38){{\tiny $2$}}
\put(90,34){\circle{8}}
\end{picture}
\par \vspace{4mm}  \par \noindent

\vspace{5mm}
\begin{flushleft}
$\bullet$ $(E_7)$ $x^3 + xy^3+z^2$ 
\end{flushleft}

\par \vspace{2mm}
The Ulrich cycles of $A$ are the following $Z_0$, $Z_1$ and $Z_2$ 
with $\ell_A(A/I_{k})=k+1$ for each $k=0,1,2$.  
\par \vspace{5mm}
\begin{picture}(300,35)(-20,0)
    \thicklines
\put(-10,10){$Z_0=$}
  \put(25,18){{\tiny $2$}}
\put(30,12){\circle*{8}}
\put(35,12){\line(1,0){20}}
  \put(55,18){{\tiny $3$}}
\put(60,12){\circle*{8}}
\put(65,12){\line(1,0){20}}
  \put(83,18){{\tiny $4$}}
\put(90,12){\circle*{8}}
\put(95,12){\line(1,0){20}}
  \put(115,18){{\tiny $3$}}
\put(120,12){\circle*{8}}
\put(125,12){\line(1,0){20}}
  \put(145,18){{\tiny $2$}}
\put(150,12){\circle*{8}}
\put(155,12){\line(1,0){20}}
  \put(175,18){{\tiny $1$}}
\put(180,12){\circle*{8}}
\put(90,16){\line(0,1){15}}
  \put(83,38){{\tiny $2$}}
\put(90,34){\circle*{8}}
\end{picture}

\par \vspace{8mm}
\begin{picture}(200,35)(-20,0)
    \thicklines
\put(-10,10){$Z_1=$}
  \put(25,18){{\tiny $2$}}
\put(30,12){\circle{8}}
\put(35,12){\line(1,0){20}}
  \put(55,18){{\tiny $4$}}
\put(60,12){\circle{8}}
\put(65,12){\line(1,0){20}}
  \put(83,18){{\tiny $6$}}
\put(90,12){\circle{8}}
\put(95,12){\line(1,0){20}}
  \put(115,18){{\tiny $5$}}
\put(120,12){\circle{8}}
\put(125,12){\line(1,0){20}}
  \put(145,18){{\tiny $4$}}
\put(150,12){\circle*{8}}
\put(155,12){\line(1,0){20}}
  \put(175,18){{\tiny $2$}}
\put(180,12){\circle*{8}}
\put(90,16){\line(0,1){15}}
  \put(83,38){{\tiny $3$}}
\put(90,34){\circle{8}}
\end{picture}
\begin{picture}(200,35)(-40,0)
    \thicklines
\put(-10,10){$Z_2=$}
  \put(25,18){{\tiny $2$}}
\put(30,12){\circle{8}}
\put(35,12){\line(1,0){20}}
  \put(55,18){{\tiny $4$}}
\put(60,12){\circle{8}}
\put(65,12){\line(1,0){20}}
  \put(83,18){{\tiny $6$}}
\put(90,12){\circle{8}}
\put(95,12){\line(1,0){20}}
  \put(115,18){{\tiny $5$}}
\put(120,12){\circle{8}}
\put(125,12){\line(1,0){20}}
  \put(145,18){{\tiny $4$}}
\put(150,12){\circle{8}}
\put(155,12){\line(1,0){20}}
  \put(175,18){{\tiny $3$}}
\put(180,12){\circle*{8}}
\put(90,16){\line(0,1){15}}
  \put(83,38){{\tiny $3$}}
\put(90,34){\circle{8}}
\end{picture}

\vspace{5mm}
\begin{flushleft}
$\bullet$ $(E_8)$ $x^3 + y^5+z^2$ 
\end{flushleft}

\par \vspace{2mm}
The Ulrich cycles of $A$ are the following $Z_0$ and $Z_1$ 
with $\ell_A(A/I_{k})=k+1$ for each $k=0, 1$.
\par \vspace{5mm}
\begin{picture}(400,35)(-20,0)
    \thicklines
\put(-10,10){$Z_0=$}
  \put(25,18){{\tiny $2$}}
\put(30,12){\circle*{8}}
\put(35,12){\line(1,0){20}}
  \put(55,18){{\tiny $4$}}
\put(60,12){\circle*{8}}
\put(65,12){\line(1,0){20}}
  \put(83,18){{\tiny $6$}}
\put(90,12){\circle*{8}}
\put(95,12){\line(1,0){20}}
  \put(115,18){{\tiny $5$}}
\put(120,12){\circle*{8}}
\put(125,12){\line(1,0){20}}
  \put(145,18){{\tiny $4$}}
\put(150,12){\circle*{8}}
\put(155,12){\line(1,0){20}}
  \put(175,18){{\tiny $3$}}
\put(180,12){\circle*{8}}
\put(185,12){\line(1,0){20}}
  \put(205,18){{\tiny $2$}}
\put(210,12){\circle*{8}}
\put(90,16){\line(0,1){15}}
  \put(83,38){{\tiny $3$}}
\put(90,34){\circle*{8}}
\end{picture}

\par \vspace{8mm}
\begin{picture}(400,35)(-20,0)
    \thicklines
\put(-10,10){$Z_1=$}
  \put(25,18){{\tiny $4$}}
\put(30,12){\circle*{8}}
\put(35,12){\line(1,0){20}}
  \put(55,18){{\tiny $7$}}
\put(60,12){\circle{8}}
\put(65,12){\line(1,0){20}}
  \put(80,18){{\tiny $10$}}
\put(90,12){\circle{8}}
\put(95,12){\line(1,0){20}}
  \put(115,18){{\tiny $8$}}
\put(120,12){\circle{8}}
\put(125,12){\line(1,0){20}}
  \put(145,18){{\tiny $6$}}
\put(150,12){\circle{8}}
\put(155,12){\line(1,0){20}}
  \put(175,18){{\tiny $4$}}
\put(180,12){\circle{8}}
\put(185,12){\line(1,0){20}}
  \put(205,18){{\tiny $2$}}
\put(210,12){\circle{8}}
\put(90,16){\line(0,1){15}}
  \put(83,38){{\tiny $5$}}
\put(90,34){\circle{8}}
\end{picture}
\end{thm}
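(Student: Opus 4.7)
The strategy is to combine Theorems \ref{SPvsUlrich-RDP} and \ref{Spcycle}: the first identifies Ulrich ideals with special ideals and, via the equivalence of conditions (1)--(3) of Theorem \ref{Spcycle}, pinpoints which indecomposable $M_i$ is Ulrich with respect to a given $I_Z$ through the coefficient equality $\coeff_{E_i} Z = n_i \cdot \ell_A(A/I)$; the second gives an inductive, combinatorial characterization of special cycles as chains $Z_0 < Z_1 < \cdots < Z_s = Z$ whose increments $Y_k = Z_k - Z_{k-1}$ are fundamental cycles on connected supports contained in $\{E_j : Z_{k-1} E_j = 0\}$, satisfying $p_a(Y_k) = 0$, $Y_k Z_{k-1} = 0$, and the monotonicity $Y_s \le \cdots \le Y_1 \le Z_0$. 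Thus the theorem reduces to an explicit enumeration on each Dynkin diagram of type $A, D, E$.

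Concretely, the procedure I would run is: start with the fundamental cycle $Z_0 = \sum_j n_j E_j$ (with its classical coefficients); at stage $k$, look at the induced subgraph $\Gamma_k$ on $\{E_j : Z_k E_j = 0\}$, for each connected component $S \subset \Gamma_k$ compute its fundamental cycle $Y_S$, and take any such $Y_{k+1} = Y_S$ satisfying $Y_{k+1} \le Y_k$ (with $Y_0 := Z_0$) for which $Z_{k+1} := Z_k + Y_{k+1}$ remains anti-nef. The colength count $\ell_A(A/I_{Z_s}) = s+1$ is then immediate from Lemma \ref{Filtlength}, since $p_a(Y_k) = 0 = Y_k Z_{k-1}$ makes each step add exactly $1$ to the colength. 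Once the list of terminal cycles $Z_s$ is compiled, the filled vertices in the pictures (the indecomposable Ulrich $A$-modules with respect to $I_{Z_s}$) are identified by Theorem \ref{Spcycle}(3): they are exactly the $E_i$ with $\coeff_{E_i} Y_k = n_i$ for all $k$, equivalently $\coeff_{E_i} Z_s = (s+1)\, n_i$.

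The case-by-case verification then proceeds on each Dynkin diagram. For type $A_n$ the inner graph $\Gamma_0$ is the chain $E_2{-}\cdots{-}E_{n-1}$; the anti-nef condition at the endpoints of $Y_1$ forces the chosen sub-chain to be centered about the middle of $A_n$, and iterating produces the symmetric plateau cycles displayed, terminating after $\lfloor n/2 \rfloor$ steps. For type $D_n$ the trivalent vertex produces two features absent in type $A$: the final choice between the two coefficient-$n/2$ leaves receiving the extra weight gives the pair $Z_{m-1}, Z_{m-1}'$ when $n = 2m$, while taking $Y_1$ to be supported on the $D$-subdiagram obtained by deleting the long-tail endpoint of multiplicity $1$ yields the short exceptional cycle $Z_2'$. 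For the three exceptional types the enumeration is finite and is read off directly from the known fundamental cycles of $E_6, E_7, E_8$.

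The main obstacle is completeness: at each stage one must rule out admissible $Y_{k+1}$'s other than those listed. The key constraints doing the work are (i) the monotonicity $Y_{k+1} \le Y_k$; (ii) $\Supp(Y_{k+1})$ being a connected ADE-subgraph of $\Gamma_k$ (so $Y_{k+1}$ is determined by its support); and (iii) the anti-nef check $Z_{k+1} E_j \le 0$ at vertices $E_j$ on the boundary of $\Supp(Y_{k+1})$, which typically forces symmetric placement. In type $A$ the symmetry is essentially automatic; in type $D$ the bookkeeping around the branch point is the most delicate and requires tracking which of the coefficient-$1$ or coefficient-$2$ endpoints is active; in type $E$ the small size of the graph reduces the verification to an entirely routine finite check.
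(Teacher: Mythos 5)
Your proposal follows essentially the same route as the paper's proof: reduce Ulrich cycles to special cycles via Theorem \ref{SPvsUlrich-RDP}, enumerate them inductively as chains $Z_0 < Z_1 < \cdots < Z_s$ whose increments are fundamental cycles on connected components of $\{E_j \mid E_jZ_{k-1}=0\}$ subject to the anti-nef check (Theorem \ref{Spcycle}), read off the colengths from Lemma \ref{Filtlength}, and identify the indecomposable Ulrich modules by the coefficient criterion $\coeff_{E_i}Z = n_i\cdot\ell_A(A/I)$. The one slip is in your $D_n$ discussion: the short exceptional cycle $Z_2'$ comes from taking $Y_1=E_1$, the singleton connected component of $\{E \mid EZ_0=0\}$, whereas the $D$-shaped component $E_3\cup\cdots\cup E_n$ produces the $k=1$ member of the main family $Z_1$; apart from this misattribution the case-by-case analysis agrees with the paper's.
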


\par 
In our previous paper \cite[Section 9]{GOTWY}, we 
gave a complete list of 
the nonparameter Ulrich ideals 
for one-dimensional simple singularities. 
We can also do it for two-dimensional simple singularities 
(rational double points).

\begin{cor} \label{RDP-Uideal}
With the same notation as in Theorem $\ref{Main-RDP}$, 
the set $\calX_A$ is equal to$:$
\begin{center}
\begin{tabular}{cl}
$(A_{2m})$ &  $\{(x,y,z),(x,y^2,z),\ldots,(x,y^m,z)\}$. 
 \\[2mm]
$(A_{2m+1})$ & $\{(x,y,z),(x,y^2,z),\ldots,(x,y^{m+1},z)\}$.  \\[2mm] 
$(D_{2m})$ & $\{(x,y,z),(x,y^2,z),\ldots,(x,y^{m-1},z),$  \\[1mm]
& ~\quad $(x+\sqrt{-1}y^{m-1},y^{m},z),(x-\sqrt{-1}y^{m-1},y^m,z), (x^2,y,z)\}$. \\[2mm]
$(D_{2m+1})$  & $\{(x,y,z),(x,y^2,z),\ldots,(x,y^{m},z),(x^2,y,z)\}$. \\[2mm]
$(E_6)$ & $\{(x,y,z),(x,y^2,z) \}$. \\[2mm]
$(E_7)$ & $\{(x,y,z),(x,y^2,z),(x,y^3,z)\}$.  \\[2mm]
$(E_8)$ & $\{(x,y,z),(x,y^2,z) \}$. 
\end{tabular}
\end{center}
\end{cor}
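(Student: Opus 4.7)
The plan is to deduce the corollary from Theorem \ref{Main-RDP} by matching each Ulrich cycle $Z$ listed there with the corresponding integrally closed ideal $I_Z = H^{0}(X,\mathcal{O}_X(-Z))$, expressed in terms of the hypersurface generators $x,y,z$. Since Ulrich ideals in a Gorenstein local ring are good (Proposition \ref{Uchar}(2)) and good ideals are represented on the minimal resolution (Definition \ref{Ratgood}), the assignment $Z \mapsto I_Z$ gives a bijection between the Ulrich cycles of Theorem \ref{Main-RDP} and the Ulrich ideals $\calX_A$; my task is to compute the right-hand side of this bijection explicitly.

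First I would verify that every ideal $J$ displayed in the corollary is an Ulrich ideal. By Proposition \ref{Uchar}(2) this amounts to checking that $J$ is good with $\mu_A(J) = 3 = d+1$, equivalently that $J$ is good and $A/J$ is Gorenstein. Each listed ideal is generated by three elements and each quotient is visibly a zero-dimensional Gorenstein local ring: $A/(x,y^{k},z)\cong k[[y]]/(y^{k})$ in all types, $A/(x^{2},y,z)\cong k[[x]]/(x^{2})$ in the $D$-types, and $A/(x\pm\sqrt{-1}\,y^{m-1},y^{m},z)\cong k[[y]]/(y^{m})$ in $D_{2m}$ after eliminating $x$ via the equation $z^{2}+x^{2}y+y^{2m-1}=0$. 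Goodness is then checked directly by exhibiting a minimal reduction $Q \subset J$ with $J^{2}=QJ$ (and automatically $Q\colon J=J$ since $A/J$ is Gorenstein).

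Next I would compute the cycle $Z_{J}$ attached to each listed ideal. This uses the (classical, and standard in the McKay-correspondence literature) tables of the divisors $(x),(y),(z)$ on the minimal resolution of each rational double point, from which one reads off $Z_{J}=\min\{(f) : f\in J\}$ component-by-component on the exceptional divisor, and compares with the cycles produced by Theorem \ref{Main-RDP}. A convenient cross-check is provided by the colengths: Lemma \ref{RR} forces $\ell_A(A/J)=\ell_A(A/I_{Z_{J}})=1-p_a(Z_{J})$, so for instance $(x,y^{k},z)$ in type $A_{n}$ (colength $k$) must be $I_{Z_{k-1}}$ (colength $k$); the two ideals $(x\pm\sqrt{-1}\,y^{m-1},y^{m},z)$ in $D_{2m}$ (each of colength $m$) must correspond to the two ``split'' Ulrich cycles $Z_{m-1}$ and $Z_{m-1}'$ of Theorem \ref{Main-RDP}; and $(x^{2},y,z)$ (colength $2$) must correspond to $Z_{2}'$.

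Finally, I would conclude by counting: the number of ideals listed in each type agrees with the number of Ulrich cycles in Theorem \ref{Main-RDP} ($m$ in $A_{2m}$; $m+1$ in $A_{2m+1}$; $m+2=(m-1)+2+1$ in $D_{2m}$; $m+1=m+1$ in $D_{2m+1}$; and $2,3,2$ in $E_{6},E_{7},E_{8}$), so once the injection ``listed ideal $\mapsto$ Ulrich cycle'' is established, surjectivity is automatic. The main obstacle is carrying out the explicit divisor computations for $(x),(y),(z)$ on the branched $D$- and $E$-configurations, in particular locating the ``split'' ideals $(x\pm\sqrt{-1}\,y^{m-1},y^{m},z)$ on the two end components of the $D_{2m}$ fork and verifying that the exotic ideal $(x^{2},y,z)$ corresponds to the cycle $Z_{2}'$ with coefficient $2$ on the end vertex; however, both are routine once the orders of vanishing of $x,y,z$ along the exceptional curves have been recorded, and Theorem \ref{Spcycle} together with Lemma \ref{RR} leaves no ambiguity in the match-up.
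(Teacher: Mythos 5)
Your overall architecture --- verify that each listed ideal is Ulrich, then count against the number of Ulrich cycles in Theorem \ref{Main-RDP} and conclude by pigeonhole --- is exactly the paper's argument. However, there is one genuine logical flaw in your verification step. You claim that after exhibiting a minimal reduction $Q\subset J$ with $J^{2}=QJ$, the equality $Q\colon J=J$ is ``automatic since $A/J$ is Gorenstein.'' This implication is false: for a stable ideal $J$ one only has the inclusion $J/Q\subseteq (Q\colon J)/Q\cong K_{A/J}\cong A/J$, and equality is an extra numerical condition, namely $\ell_A(A/Q)=2\cdot\ell_A(A/J)$ (equivalently $\e_J^0(A)=2\cdot\ell_A(A/J)$). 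The paper's own Example \ref{ex-0dimhyp} is precisely a stable ideal with Gorenstein quotient that fails to be good. So your appeal to Proposition \ref{Uchar}(2) is circular as written: goodness is part of what must be checked, and the missing check is the colength computation $\ell_A(A/Q)=2\cdot\ell_A(A/J)$, which the paper performs explicitly (and which, together with $\mu_A(J)=3$ and stability, yields Ulrichness directly via Proposition \ref{Uchar}(1)). Note also that this gap propagates: your later cross-checks via Lemma \ref{RR} presuppose $J=I_{Z_J}$ for an anti-nef cycle $Z_J$, which already requires $J$ to be integrally closed and represented on $X$, i.e.\ requires the goodness you have not yet established.

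Beyond that, your middle step --- computing the divisors of $x$, $y$, $z$ on each resolution and matching every listed ideal to its cycle --- is sound but unnecessary, and the paper skips it entirely. Once each listed ideal is known to be Ulrich and the listed ideals are seen to be pairwise distinct (which is immediate from colengths together with trivial observations such as $y\in(x^2,y,z)\setminus(x,y^2,z)$, with no resolution geometry needed), the count $\sharp\calX_A=m$, $m+1$, $m+2$, $m+1$, $2$, $3$, $2$ from Theorem \ref{Main-RDP} forces the two sets to coincide; which ideal corresponds to which cycle is never needed for the corollary. Your route does buy the explicit dictionary between ideals and Ulrich cycles (useful, for instance, if one wants to know which $M_i$ are Ulrich with respect to a given $(x,y^k,z)$), but at the cost of the divisor computations on the branched $D$- and $E$-graphs that you yourself identify as the main burden.
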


\begin{proof}
One can easily see that any ideal $I$ appearing in the corollary has the form $I=Q+(z)$, where $Q$ is a parameter ideal of $A$ and 
$I^2 =QI$, $\ell_A(A/Q)= 2 \cdot \ell_A(A/I)$ and $\mu(I)=3$. 
Hence those ideals $I$ are Ulrich. 
\par
On the other hand, Theorem \ref{Main-RDP} implies that 
$\sharp\calX_A=m$ (resp. $m+1$, $m+2$, $m+1$, $2$, $3$ ,$2$)
if $A$ is a rational double point of type 
$(A_{2m})$ (resp. $(A_{2m+1})$, $(D_{2m})$, $(D_{2m+1})$, 
$(E_6)$, $(E_7)$, $(E_8)$). 
Hence the set as above coincides with $\calX_A$, respectively.     
\end{proof}

\begin{proof}[Proof of Theorem \ref{Main-RDP}]
\par
We first consider the cases $(E_6)$, $(E_7)$, $(E_8)$. 
\par \vspace{3mm} 
\underline{\bf The case $(E_6): f=x^3+y^4+z^2$}. 
The fundamental cycle $Z_0$ on the minimal resolution is given by 
\par \vspace{5mm}
\begin{picture}(200,35)(0,0)
    \thicklines
\put(90,10){$Z_0=$}
  \put(125,18){{\tiny $1$}}
  \put(125,0){{\tiny $E_1$}}
\put(130,12){\circle{8}}
\put(135,12){\line(1,0){20}}
  \put(155,18){{\tiny $2$}}
  \put(155,0){{\tiny $E_2$}}
\put(160,12){\circle{8}}
\put(165,12){\line(1,0){20}}
  \put(183,18){{\tiny $3$}}
  \put(183,0){{\tiny $E_3$}}
\put(190,12){\circle{8}}
\put(195,12){\line(1,0){20}}
  \put(215,18){{\tiny $2$}}
    \put(215,0){{\tiny $E_4$}}
\put(220,12){\circle{8}}
\put(225,12){\line(1,0){20}}
  \put(245,18){{\tiny $1$}}
  \put(245,0){{\tiny $E_5$}}
\put(250,12){\circle{8}}
\put(190,16){\line(0,1){15}}
  \put(178,35){{\tiny $2$}}
  \put(197,35){{\tiny $E_6$}}  
\put(190,34){\circle{8}}
\end{picture}
\par \vspace{3mm}
Now suppose that $Z_0+Y$ is a special cycle for some positive cycle $Y \le Z_0$.  
Since $\cup \{E\,|\, EZ_0=0\} = \cup_{i=1}^5 E_i$ is connected, we have 
$Y=Y_1:=\sum_{i=1}^5 E_i$ in Theorem \ref{Spcycle}(3). 
Conversely, if we put
\par \vspace{5mm}
\begin{picture}(200,35)(0,0)
    \thicklines
\put(38,10){$Z_1=Z_0+Y_1=$}
  \put(125,18){{\tiny $2$}}
  \put(125,0){{\tiny $E_1$}}
\put(130,12){\circle{8}}
\put(135,12){\line(1,0){20}}
  \put(155,18){{\tiny $3$}}
  \put(155,0){{\tiny $E_2$}}
\put(160,12){\circle{8}}
\put(165,12){\line(1,0){20}}
  \put(183,18){{\tiny $4$}}
  \put(183,0){{\tiny $E_3$}}
\put(190,12){\circle{8}}
\put(195,12){\line(1,0){20}}
  \put(215,18){{\tiny $3$}}
    \put(215,0){{\tiny $E_4$}}
\put(220,12){\circle{8}}
\put(225,12){\line(1,0){20}}
  \put(245,18){{\tiny $2$}}
  \put(245,0){{\tiny $E_5$}}
\put(250,12){\circle{8}}
\put(190,16){\line(0,1){15}}
  \put(178,35){{\tiny $2$}}
  \put(197,35){{\tiny $E_6$}}  
\put(190,34){\circle{8}} .
\end{picture}
\par \vspace{3mm} \par \noindent
then $Z_1$ is anti-nef and $p(Y_1)=0$ because $Y_1$ can be regarded as the fundamental cycle 
on the dual graph of (the minimal resolution) of type $(A_5)$.
Hence  $Z_1$ is a special cycle 
and $M$ is an Ulrich $A$-module with respect to $I_{Z_1}$ if and only if 
it is a finite direct sum of $M_1$ and $M_5$ because 
$\coeff_{E_i} Y_1 = n_i(=1) \Longleftrightarrow i=1,5$; see Theorem \ref{Spcycle}.   
\par
Suppose that $Z_2 = Z_1+Y$ is a special cycle for some positive cycle $Y \le Y_1$. 
As $\cup \{E \subset \Supp(Y_1) \,|\, EZ_1=0\} = E_2 \cup E_3 \cup E_4$ is connected,  we have that 
$Y=E_2+E_3+E_4$ as the fundamental cycle on $\Supp(Y)=E_2 \cup E_3 \cup E_4$. 
But $Z_2 = Z_1+Y = 2E_1+4E_2+5E_3+4E_4+2E_5+2E_6$ is not anti-nef because$Z_2E_6=1$.  So 
the special cycles of $(E_6)$ are $Z_0$ and $Z_1$. 

\par \vspace{3mm}
\underline{\bf The case $(E_7): f=x^3+xy^3+z^2$}. 
The fundamental cycle $Z_0$ on the minimal resolution is given by 
\par \vspace{5mm}
\begin{picture}(300,35)(0,0)
    \thicklines
\put(90,10){$Z_0=$}
  \put(125,18){{\tiny $2$}}
\put(125,0){{\tiny $E_1$}}  
\put(130,12){\circle{8}}
\put(135,12){\line(1,0){20}}
  \put(155,18){{\tiny $3$}}
  \put(155,0){{\tiny $E_2$}}
\put(160,12){\circle{8}}
\put(165,12){\line(1,0){20}}
  \put(183,18){{\tiny $4$}}
  \put(183,0){{\tiny $E_3$}}
\put(190,12){\circle{8}}
\put(195,12){\line(1,0){20}}
  \put(215,18){{\tiny $3$}}
  \put(215,0){{\tiny $E_4$}}
\put(220,12){\circle{8}} 
\put(225,12){\line(1,0){20}}
  \put(245,18){{\tiny $2$}}
  \put(245,0){{\tiny $E_5$}}
\put(250,12){\circle{8}}
\put(255,12){\line(1,0){20}}
  \put(275,18){{\tiny $1$}}
  \put(275,0){{\tiny $E_6$}}
\put(280,12){\circle{8}}
\put(190,16){\line(0,1){15}}
  \put(178,35){{\tiny $2$}}
  \put(196,35){{\tiny $E_7$}}
\put(190,34){\circle{8}}
\end{picture}
\par \vspace{3mm}
Since $\cup\{E\,|\, EZ_0=0\} = \cup_{i=2}^7 E_i$ is 
isomorphic to the dual graph of $(D_6)$, 
if $Z_1 = Z_0+Y$ is a special cycle for some positive cycle $Y \le Z_0$, then we have  
\par \vspace{5mm}
\begin{picture}(300,35)(0,0)
    \thicklines
\put(90,10){$Y=$}
  \put(155,18){{\tiny $1$}}
  \put(155,0){{\tiny $E_2$}}
\put(160,12){\circle{8}}
\put(165,12){\line(1,0){20}}
  \put(183,18){{\tiny $2$}}
  \put(183,0){{\tiny $E_3$}}
\put(190,12){\circle{8}}
\put(195,12){\line(1,0){20}}
  \put(215,18){{\tiny $2$}}
  \put(215,0){{\tiny $E_4$}}
\put(220,12){\circle{8}} 
\put(225,12){\line(1,0){20}}
  \put(245,18){{\tiny $2$}}
  \put(245,0){{\tiny $E_5$}}
\put(250,12){\circle{8}}
\put(255,12){\line(1,0){20}}
  \put(275,18){{\tiny $1$}}
  \put(275,0){{\tiny $E_6$}}
\put(280,12){\circle{8}}
\put(190,16){\line(0,1){15}}
  \put(178,35){{\tiny $1$}}
  \put(196,35){{\tiny $E_7$}}
\put(190,34){\circle{8}}
\put(300,10){$:=Y_1$.}
\end{picture}
\par \vspace{3mm} \par \noindent 
Conversely, one can easily see that 
the following $Z_1$ is a special cycle by Theorem \ref{Spcycle}. 
\par \vspace{5mm}
\begin{picture}(300,35)(0,0)
    \thicklines
\put(40,10){$Z_1=Z_0+Y_1=$}
  \put(125,18){{\tiny $2$}}
\put(125,0){{\tiny $E_1$}}  
\put(130,12){\circle{8}}
\put(135,12){\line(1,0){20}}
  \put(155,18){{\tiny $4$}}
  \put(155,0){{\tiny $E_2$}}
\put(160,12){\circle{8}}
\put(165,12){\line(1,0){20}}
  \put(183,18){{\tiny $6$}}
  \put(183,0){{\tiny $E_3$}}
\put(190,12){\circle{8}}
\put(195,12){\line(1,0){20}}
  \put(215,18){{\tiny $5$}}
  \put(215,0){{\tiny $E_4$}}
\put(220,12){\circle{8}} 
\put(225,12){\line(1,0){20}}
  \put(245,18){{\tiny $4$}}
  \put(245,0){{\tiny $E_5$}}
\put(250,12){\circle{8}}
\put(255,12){\line(1,0){20}}
  \put(275,18){{\tiny $2$}}
  \put(275,0){{\tiny $E_6$}}
\put(280,12){\circle{8}}
\put(190,16){\line(0,1){15}}
  \put(178,35){{\tiny $3$}}
  \put(196,35){{\tiny $E_7$}}
\put(190,34){\circle{8}}
\end{picture}
\par \vspace{3mm}
Note that $\cup \{E \subset \Supp(Y_1) \,|\,EZ_1=0\}$ admits 
two connected components:
\par \vspace{5mm}
\begin{picture}(300,35)(-20,0)
    \thicklines
  \put(55,0){{\tiny $E_2$}}
\put(60,12){\circle{8}}
\put(65,12){\line(1,0){20}}
  \put(83,0){{\tiny $E_3$}}
\put(90,12){\circle{8}}
\put(95,12){\line(1,0){20}}
  \put(115,0){{\tiny $E_4$}}
\put(120,12){\circle{8}} 
  \put(175,0){{\tiny $E_6$}}
\put(180,12){\circle{8}}
\put(90,16){\line(0,1){15}}
  \put(96,35){{\tiny $E_7$}}
\put(90,34){\circle{8}}
\end{picture}
\par \vspace{3mm} \par \noindent 
The fundamental cycles $Y_2$ of their components are $E_2+2E_3+E_4+E_5$ and $E_6$, respectively. 
Note that $Z_1+(E_2+2E_3+E_4+E_5)$ is not anti-nef. 
So we are done. 

\par \vspace{3mm}
\underline{\bf The case $(E_8): x^3+y^5+z^2$}. 
The fundamental cycle $Z_0$ on the minimal resolution is given by 
\par \vspace{5mm}
\begin{picture}(400,35)(0,0)
    \thicklines
\put(90,10){$Z_0=$}
  \put(125,18){{\tiny $2$}}
  \put(125,0){{\tiny $E_1$}}
\put(130,12){\circle{8}}
\put(135,12){\line(1,0){20}}
  \put(155,18){{\tiny $4$}}
  \put(155,0){{\tiny $E_2$}}
\put(160,12){\circle{8}}
\put(165,12){\line(1,0){20}}
  \put(183,18){{\tiny $6$}}
  \put(183,0){{\tiny $E_3$}}
\put(190,12){\circle{8}}
\put(195,12){\line(1,0){20}}
  \put(215,18){{\tiny $5$}}
  \put(215,0){{\tiny $E_4$}}
\put(220,12){\circle{8}}
\put(225,12){\line(1,0){20}}
  \put(245,18){{\tiny $4$}}
  \put(245,0){{\tiny $E_5$}}
\put(250,12){\circle{8}}
\put(255,12){\line(1,0){20}}
  \put(275,18){{\tiny $3$}}
  \put(275,0){{\tiny $E_6$}}
\put(280,12){\circle{8}}
\put(285,12){\line(1,0){20}}
  \put(305,18){{\tiny $2$}}
   \put(305,0){{\tiny $E_7$}}
\put(310,12){\circle{8}}
\put(190,16){\line(0,1){15}}
  \put(178,35){{\tiny $3$}}
   \put(195,35){{\tiny $E_8$}}
\put(190,34){\circle{8}}
\end{picture}
\par \vspace{3mm}
Suppose that $Z_0+Y$ is a special cycle for some positive cycle $Y \le Z_0$. 
As $\cup \{E\,|\, EZ_0=0\} = \cup_{i\ne 7} E_i$ is connected and  the corresponding 
graph is isomorphic to the dual graph of $(E_7)$, we have 
\par \vspace{5mm}
\begin{picture}(400,35)(0,0)
    \thicklines
\put(90,10){$Y=$}
  \put(125,18){{\tiny $2$}}
  \put(125,0){{\tiny $E_1$}}
\put(130,12){\circle{8}}
\put(135,12){\line(1,0){20}}
  \put(155,18){{\tiny $3$}}
  \put(155,0){{\tiny $E_2$}}
\put(160,12){\circle{8}}
\put(165,12){\line(1,0){20}}
  \put(183,18){{\tiny $4$}}
  \put(183,0){{\tiny $E_3$}}
\put(190,12){\circle{8}}
\put(195,12){\line(1,0){20}}
  \put(215,18){{\tiny $3$}}
  \put(215,0){{\tiny $E_4$}}
\put(220,12){\circle{8}}
\put(225,12){\line(1,0){20}}
  \put(245,18){{\tiny $2$}}
  \put(245,0){{\tiny $E_5$}}
\put(250,12){\circle{8}}
\put(255,12){\line(1,0){20}}
  \put(275,18){{\tiny $1$}}
  \put(275,0){{\tiny $E_6$}}
\put(280,12){\circle{8}}
%
\put(190,16){\line(0,1){15}}
  \put(178,35){{\tiny $2$}}
   \put(195,35){{\tiny $E_8$}}
\put(190,34){\circle{8}}
\put(300,10){$:=Y_1$}
\end{picture}
\par \vspace{3mm}
\noindent
Conversely, if we put 
\par \vspace{5mm}
\begin{picture}(400,35)(0,0)
    \thicklines
\put(40,10){$Z_1=Z_0+Y_1=$}
  \put(125,18){{\tiny $4$}}
  \put(125,0){{\tiny $E_1$}}
\put(130,12){\circle{8}}
\put(135,12){\line(1,0){20}}
  \put(155,18){{\tiny $7$}}
  \put(155,0){{\tiny $E_2$}}
\put(160,12){\circle{8}}
\put(165,12){\line(1,0){20}}
  \put(181,18){{\tiny $10$}}
  \put(183,0){{\tiny $E_3$}}
\put(190,12){\circle{8}}
\put(195,12){\line(1,0){20}}
  \put(215,18){{\tiny $8$}}
  \put(215,0){{\tiny $E_4$}}
\put(220,12){\circle{8}}
\put(225,12){\line(1,0){20}}
  \put(245,18){{\tiny $6$}}
  \put(245,0){{\tiny $E_5$}}
\put(250,12){\circle{8}}
\put(255,12){\line(1,0){20}}
  \put(275,18){{\tiny $4$}}
  \put(275,0){{\tiny $E_6$}}
\put(280,12){\circle{8}}
\put(285,12){\line(1,0){20}}
  \put(305,18){{\tiny $2$}}
   \put(305,0){{\tiny $E_7$}}
\put(310,12){\circle{8}}
\put(190,16){\line(0,1){15}}
  \put(178,35){{\tiny $5$}}
   \put(195,35){{\tiny $E_8$}}
\put(190,34){\circle{8}}
\end{picture}
\par \vspace{3mm}\par \noindent
then $Z_1$ is a special cycle by Theorem \ref{Spcycle}.  
\par
Now suppose that $Z_1 + Y$ is a special cycle for some positive cycle $Y \le Y_1$. 
Since $\cup \{E \subset \Supp(Y_1) \,|\, EZ_1=0\}=E_3 \cup E_4 \cup E_5 \cup E_6 \cup E_8$ 
is connected, we have $Y = E_3+E_4+E_5+E_6+E_8$. 
But $Z_1 +(E_3+E_4+E_5+E_6+E_8)$ is not anti-nef. 

\par \vspace{3mm}
We next consider the case $(A_{n})$. 
\par \vspace{3mm}
\underline{\bf The case $(A_{2m})$: $f=x^2+y^{2m+1}+z^2$}.
The fundamental cycle $Z_0$ on the minimal resolution is given by 
\par \vspace{5mm}
\begin{picture}(400,20)(0,0)
    \thicklines
\put(90,10){$Z_0=$}
\put(125,18){{\tiny $1$}}
\put(124,0){{\tiny $E_1$}}
\put(130,12){\circle{8}}
\put(135,12){\line(1,0){20}}
\put(155,18){{\tiny $1$}}
\put(154,0){{\tiny $E_2$}}
\put(160,12){\circle{8}}
\put(165,12){\line(1,0){20}}
\put(185,18){{\tiny $1$}}
\put(184,0){{\tiny $E_3$}}
\put(190,12){\circle{8}}
\put(195,12){\line(1,0){15}}
\put(214,10){$\cdots$}
\put(230,12){\line(1,0){15}}
\put(241,18){{\tiny $1$}}
\put(238,0){{\tiny $E_{n-2}$}}
\put(250,12){\circle{8}}
\put(255,12){\line(1,0){20}}
\put(271,18){{\tiny $1$}}
\put(270,0){{\tiny $E_{n-1}$}}
\put(280,12){\circle{8}}
\put(285,12){\line(1,0){20}}
\put(301,18){{\tiny $1$}}
\put(298,0){{\tiny $E_{n}$}}
\put(310,12){\circle{8}}
\end{picture}
\par \vspace{3mm} \par \noindent
that is, $Z_0=\sum_{i=1}^n E_i$. 
\par
Now suppose that $Z_0+Y$ is a special cycle for some positive cycle $Y \le Z_0$. 
Since $\cup \{E\,|\, EZ_0=0\} = \cup_{i=2}^{n-1} E_i$ is connected, 
$Y=Y_1$, where $Y_1$ is the fundamental cycle on $ \cup_{i=2}^{n-1} E_i$,  
that is, $Y_1 = \sum_{i=2}^{n-1} E_i$. 
Conversely, 
\par \vspace{5mm}
\begin{picture}(400,20)(0,0)
    \thicklines
\put(40,10){$Z_1=Z_0+Y_1=$}
\put(125,18){{\tiny $1$}}
\put(124,0){{\tiny $E_1$}}
\put(130,12){\circle{8}}
\put(135,12){\line(1,0){20}}
\put(155,18){{\tiny $2$}}
\put(154,0){{\tiny $E_2$}}
\put(160,12){\circle{8}}
\put(165,12){\line(1,0){20}}
\put(185,18){{\tiny $2$}}
\put(184,0){{\tiny $E_3$}}
\put(190,12){\circle{8}}
\put(195,12){\line(1,0){15}}
\put(214,10){$\cdots$}
\put(230,12){\line(1,0){15}}
\put(241,18){{\tiny $2$}}
\put(238,0){{\tiny $E_{n-2}$}}
\put(250,12){\circle{8}}
\put(255,12){\line(1,0){20}}
\put(271,18){{\tiny $2$}}
\put(270,0){{\tiny $E_{n-1}$}}
\put(280,12){\circle{8}}
\put(285,12){\line(1,0){20}}
\put(301,18){{\tiny $1$}}
\put(298,0){{\tiny $E_{n}$}}
\put(310,12){\circle{8}}
\end{picture}
\par \vspace{3mm} \par \noindent
is a special cycle by Theorem \ref{Spcycle}. 
Similarly, if we put $Y_k=\sum_{i=k+1}^{2m-k} E_i$ for every $k=1,2,\ldots,m-1$, 
then we have 
\begin{enumerate}
\item[(a)] $0 < Y_{m-1} < Y_{m-2} < \ldots Y_2 < Y_1 \le Z_0$, 
\item[(b)] $p_a(Y_k)=0$ and $Y_kZ_{k-1}=0$ for every $k=1,\ldots,m-1$,
\item[(c)] $Z_k=Z_{k-1}+Y_k$ is anti-nef for every $k=1,\ldots,m-1$. 
\item[(d)] $\coeff_{E_i} Y_k = n_i$ if and only if $k+1 \le i \le 2m-k$. 
\end{enumerate}
This produces a sequence of Ulrich ideals:
\[
 I_{Z_{m-1}} \subset I_{Z_{m-2}} \subset \cdots \subset I_{Z_1} = \fkm.
\]
We can determine Ulrich ideals  in the case of $(A_{2m+1})$ similarly.
\par \vspace{3mm}
Finally, we consider the case $(D_{n}): f=x^2+xy^{n-3}+z^2$.
\par \vspace{3mm}
\underline{\bf The case $(D_{2m}): f=x^2+xy^{2m-3}+z^2$}. 
The fundamental cycle $Z_0$ on the minimal resolution of singularities 
is given by 
\par \vspace{5mm}
\begin{picture}(400,30)(0,0)
    \thicklines
\put(90,10){$Z_{0}=$}
  \put(135,18){{\tiny $1$}} 
  \put(135,0){{\tiny $E_1$}}
\put(140,12){\circle{8}}
\put(145,12){\line(1,0){20}}
  \put(165,18){{\tiny $2$}}
  \put(165,0){{\tiny $E_2$}}
\put(170,12){\circle{8}}
\put(175,12){\line(1,0){20}}
  \put(195,18){{\tiny $2$}}
   \put(195,0){{\tiny $E_3$}}
\put(200,12){\circle{8}}
\put(205,12){\line(1,0){15}}
\put(224,10){$\cdots$}
\put(240,12){\line(1,0){15}}
  \put(253,18){{\tiny $2$}}
  \put(241,0){{\tiny $E_{2m-3}$}}
\put(260,12){\circle{8}}
\put(265,12){\line(1,0){25}}
  \put(286,18){{\tiny $2$}}
  \put(278,0){{\tiny $E_{2m-2}$}}
\put(295,12){\circle{8}}
\put(300,14){\line(1,1){11}}
\put(300,10){\line(1,-1){11}}
\put(315,26){\circle{8}}
\put(315,-2){\circle{8}}
  \put(311,32){{\tiny $1$}}
  \put(320,23){{\tiny $E_{2m-1}$}}
  \put(311,4){{\tiny $1$}}
  \put(320,-5){{\tiny $E_{2m}$}}
\end{picture}
\par \vspace{3mm} \par \noindent
That is, $Z_0=E_1+2\sum_{i=2}^{2m-2} E_i + E_{2m-1}+E_{2m}$. 
\par 
Now suppose that $Z_0+Y$ is a special cycle on $X$ for some positive 
cycle $Y \le Z_0$. 
Since $\cup \{E\,|\,EZ_0=0\}$ has two connected components, 
we have that  $Y=E_1$ or $Y=Y_1$:
\par \vspace{5mm}
\begin{picture}(400,30)(-20,0)
    \thicklines
\put(10,10){$Y_{1}=$}
  \put(65,18){{\tiny $1$}}
  \put(65,0){{\tiny $E_3$}}
\put(70,12){\circle{8}}
\put(75,12){\line(1,0){20}}
  \put(95,18){{\tiny $2$}}
   \put(95,0){{\tiny $E_4$}}
\put(100,12){\circle{8}}
\put(105,12){\line(1,0){15}}
\put(124,10){$\cdots$}
\put(140,12){\line(1,0){15}}
  \put(153,18){{\tiny $2$}}
  \put(141,0){{\tiny $E_{2m-3}$}}
\put(160,12){\circle{8}}
\put(165,12){\line(1,0){25}}
  \put(186,18){{\tiny $2$}}
  \put(178,0){{\tiny $E_{2m-2}$}}
\put(195,12){\circle{8}}
\put(200,14){\line(1,1){11}}
\put(200,10){\line(1,-1){11}}
\put(215,26){\circle{8}}
\put(215,-2){\circle{8}}
  \put(211,32){{\tiny $1$}}
  \put(220,23){{\tiny $E_{2m-1}$}}
  \put(211,4){{\tiny $1$}}
  \put(220,-5){{\tiny $E_{2m}$}}
\end{picture} 
\par \vspace{3mm}
\noindent
Conversely, 
\par \vspace{5mm}
\begin{picture}(400,30)(0,0)
    \thicklines
\put(40,10){$Z_{1}'=Z_0+E_1=$}
  \put(135,18){{\tiny $2$}} 
  \put(135,0){{\tiny $E_1$}}
\put(140,12){\circle{8}}
\put(145,12){\line(1,0){20}}
  \put(165,18){{\tiny $2$}}
  \put(165,0){{\tiny $E_2$}}
\put(170,12){\circle{8}}
\put(175,12){\line(1,0){20}}
  \put(195,18){{\tiny $2$}}
   \put(195,0){{\tiny $E_3$}}
\put(200,12){\circle{8}}
\put(205,12){\line(1,0){15}}
\put(224,10){$\cdots$}
\put(240,12){\line(1,0){15}}
  \put(253,18){{\tiny $2$}}
  \put(241,0){{\tiny $E_{2m-3}$}}
\put(260,12){\circle{8}}
\put(265,12){\line(1,0){25}}
  \put(286,18){{\tiny $2$}}
  \put(278,0){{\tiny $E_{2m-2}$}}
\put(295,12){\circle{8}}
\put(300,14){\line(1,1){11}}
\put(300,10){\line(1,-1){11}}
\put(315,26){\circle{8}}
\put(315,-2){\circle{8}}
  \put(311,32){{\tiny $1$}}
  \put(320,23){{\tiny $E_{2m-1}$}}
  \put(311,4){{\tiny $1$}}
  \put(320,-5){{\tiny $E_{2m}$}}
\end{picture}
\par \vspace{3mm}
\noindent
and 
\par \vspace{5mm}
\begin{picture}(400,30)(0,0)
    \thicklines
\put(40,10){$Z_{1}=Z_0+Y_1=$}
  \put(135,18){{\tiny $1$}} 
  \put(135,0){{\tiny $E_1$}}
\put(140,12){\circle{8}}
\put(145,12){\line(1,0){20}}
  \put(165,18){{\tiny $2$}}
  \put(165,0){{\tiny $E_2$}}
\put(170,12){\circle{8}}
\put(175,12){\line(1,0){20}}
  \put(195,18){{\tiny $3$}}
   \put(195,0){{\tiny $E_3$}}
\put(200,12){\circle{8}}
\put(205,12){\line(1,0){20}}
  \put(225,18){{\tiny $4$}}
   \put(225,0){{\tiny $E_4$}}
\put(230,12){\circle{8}}
\put(235,12){\line(1,0){15}}
\put(254,10){$\cdots$}
\put(270,12){\line(1,0){15}}
  \put(283,18){{\tiny $4$}}
  \put(271,0){{\tiny $E_{2m-3}$}}
\put(290,12){\circle{8}}
\put(295,12){\line(1,0){25}}
  \put(316,18){{\tiny $4$}}
  \put(308,0){{\tiny $E_{2m-2}$}}
\put(325,12){\circle{8}}
\put(330,14){\line(1,1){11}}
\put(330,10){\line(1,-1){11}}
\put(345,26){\circle{8}}
\put(345,-2){\circle{8}}
  \put(341,32){{\tiny $2$}}
  \put(350,23){{\tiny $E_{2m-1}$}}
  \put(341,4){{\tiny $2$}}
  \put(350,-5){{\tiny $E_{2m}$}}
\end{picture}
\par \vspace{3mm} \par \noindent
are special cycles by Theorem \ref{Spcycle}. 
\par
Suppose that $Z_1+Y$ is a special cycle for some positive cycle $Y \le Y_1$. 
Since $\cup \{E \subset \Supp(Y_1) \,|\, EZ_1 =0\}$ has two connected components, 
we have $Y=E_3$ or $Y=Y_2$, where 
\par \vspace{5mm}
\begin{picture}(400,30)(0,0)
    \thicklines
\put(90,10){$Y_2=$}
  \put(165,18){{\tiny $1$}}
  \put(165,0){{\tiny $E_5$}}
\put(170,12){\circle{8}}
\put(175,12){\line(1,0){20}}
  \put(195,18){{\tiny $2$}}
   \put(195,0){{\tiny $E_6$}}
\put(200,12){\circle{8}}
\put(205,12){\line(1,0){15}}
\put(224,10){$\cdots$}
\put(240,12){\line(1,0){15}}
  \put(253,18){{\tiny $2$}}
  \put(241,0){{\tiny $E_{2m-3}$}}
\put(260,12){\circle{8}}
\put(265,12){\line(1,0){25}}
  \put(286,18){{\tiny $2$}}
  \put(278,0){{\tiny $E_{2m-2}$}}
\put(295,12){\circle{8}}
\put(300,14){\line(1,1){11}}
\put(300,10){\line(1,-1){11}}
\put(315,26){\circle{8}}
\put(315,-2){\circle{8}}
  \put(311,32){{\tiny $1$}}
  \put(320,23){{\tiny $E_{2m-1}$}}
  \put(311,4){{\tiny $1$}}
  \put(320,-5){{\tiny $E_{2m}$}}
\end{picture}
\par \vspace{5mm}
\noindent
Then $Z_1+E_3$ is not anti-nef, but 
\par \vspace{5mm}
\begin{picture}(400,30)(0,0)
    \thicklines
\put(90,10){$Z_2=$}
  \put(135,18){{\tiny $1$}} 
  \put(135,0){{\tiny $E_1$}}
\put(140,12){\circle{8}}
\put(145,12){\line(1,0){20}}
  \put(165,18){{\tiny $2$}}
  \put(165,0){{\tiny $E_2$}}
\put(170,12){\circle{8}}
\put(175,12){\line(1,0){20}}
  \put(195,18){{\tiny $3$}}
   \put(195,0){{\tiny $E_3$}}
\put(200,12){\circle{8}}
\put(205,12){\line(1,0){20}}
  \put(225,18){{\tiny $4$}}
   \put(225,0){{\tiny $E_4$}}
\put(230,12){\circle{8}}
\put(235,12){\line(1,0){15}}
\put(254,10){$\cdots$}
\put(270,12){\line(1,0){15}}
  \put(283,18){{\tiny $6$}}
  \put(271,0){{\tiny $E_{2m-3}$}}
\put(290,12){\circle{8}}
\put(295,12){\line(1,0){25}}
  \put(316,18){{\tiny $6$}}
  \put(308,0){{\tiny $E_{2m-2}$}}
\put(325,12){\circle{8}}
\put(330,14){\line(1,1){11}}
\put(330,10){\line(1,-1){11}}
\put(345,26){\circle{8}}
\put(345,-2){\circle{8}}
  \put(341,32){{\tiny $2$}}
  \put(350,23){{\tiny $E_{2m-1}$}}
  \put(341,4){{\tiny $2$}}
  \put(350,-5){{\tiny $E_{2m}$}}
\end{picture}
\par \vspace{3mm} \par \noindent
is a special cycle. 
Similarly, if we put 
\[
Y_k = E_{2k+1}+2\sum_{i=2k+2}^{2m-2} E_i + E_{2m-1}+E_{2m},\qquad 
Z_k = Z_{k-1}+Y_k
\]
for each $k=1,2,\ldots,m-2$, then we have a sequence of positive cycles 
\[
 0 < Y_{m-2} < Y_{m-3} < \cdots < Y_1 \le Z_0. 
\]
By Theorem \ref{Spcycle}, $Z_0$, $Z_1,\ldots,Z_{m-2}$ are special cycles. 
Note that
\[
\cup\{E \subset \Supp(Y_{m-2}) \,|\, EZ_{m-2}=0\} 
= E_{2m-3} \cup E_{2m-1} \cup E_{2m}
\] 
and 
\par \vspace{5mm}
\begin{picture}(400,30)(0,0)
    \thicklines
\put(80,10){$Z_{m-2}=$}
  \put(135,18){{\tiny $1$}} 
  \put(135,0){{\tiny $E_1$}}
\put(140,12){\circle{8}}
\put(145,12){\line(1,0){20}}
  \put(165,18){{\tiny $2$}}
  \put(165,0){{\tiny $E_2$}}
\put(170,12){\circle{8}}
\put(175,12){\line(1,0){20}}
  \put(195,18){{\tiny $3$}}
   \put(195,0){{\tiny $E_3$}}
\put(200,12){\circle{8}}
\put(205,12){\line(1,0){15}}
\put(224,10){$\cdots$}
\put(240,12){\line(1,0){15}}
  \put(245,18){{\tiny $2m-3$}}
  \put(241,0){{\tiny $E_{2m-3}$}}
\put(260,12){\circle{8}}
\put(265,12){\line(1,0){25}}
  \put(278,18){{\tiny $2m-2$}}
  \put(278,0){{\tiny $E_{2m-2}$}}
\put(295,12){\circle{8}}
\put(300,14){\line(1,1){11}}
\put(300,10){\line(1,-1){11}}
\put(315,26){\circle{8}}
\put(315,-2){\circle{8}}
  \put(317,32){{\tiny $m-1$}}
  \put(320,23){{\tiny $E_{2m-1}$}}
  \put(317,4){{\tiny $m-1$}}
  \put(320,-5){{\tiny $E_{2m}$}}
\end{picture}
\par \vspace{3mm} 
\noindent
By a similar argument as above, we obtain two minimal special cycles:
\par \vspace{5mm}
\begin{picture}(400,30)(0,0)
    \thicklines
\put(80,10){$Z_{m-1}=$}
  \put(135,18){{\tiny $1$}} 
  \put(135,0){{\tiny $E_1$}}
\put(140,12){\circle{8}}
\put(145,12){\line(1,0){20}}
  \put(165,18){{\tiny $2$}}
  \put(165,0){{\tiny $E_2$}}
\put(170,12){\circle{8}}
\put(175,12){\line(1,0){20}}
  \put(195,18){{\tiny $3$}}
   \put(195,0){{\tiny $E_3$}}
\put(200,12){\circle{8}}
\put(205,12){\line(1,0){15}}
\put(224,10){$\cdots$}
\put(240,12){\line(1,0){15}}
  \put(245,18){{\tiny $2m-3$}}
  \put(241,0){{\tiny $E_{2m-3}$}}
\put(260,12){\circle{8}}
\put(265,12){\line(1,0){25}}
  \put(278,18){{\tiny $2m-2$}}
  \put(278,0){{\tiny $E_{2m-2}$}}
\put(295,12){\circle{8}}
\put(300,14){\line(1,1){11}}
\put(300,10){\line(1,-1){11}}
\put(315,26){\circle{8}}
\put(315,-2){\circle{8}}
  \put(320,32){{\tiny $m$}}
  \put(320,23){{\tiny $E_{2m-1}$}}
  \put(320,4){{\tiny $m-1$}}
  \put(320,-5){{\tiny $E_{2m}$}}
\end{picture}
\par \vspace{8mm}
\begin{picture}(400,30)(0,0)
    \thicklines
\put(80,10){$Z_{m-1}'=$}
  \put(135,18){{\tiny $1$}} 
  \put(135,0){{\tiny $E_1$}}
\put(140,12){\circle{8}}
\put(145,12){\line(1,0){20}}
  \put(165,18){{\tiny $2$}}
  \put(165,0){{\tiny $E_2$}}
\put(170,12){\circle{8}}
\put(175,12){\line(1,0){20}}
  \put(195,18){{\tiny $3$}}
   \put(195,0){{\tiny $E_3$}}
\put(200,12){\circle{8}}
\put(205,12){\line(1,0){15}}
\put(224,10){$\cdots$}
\put(240,12){\line(1,0){15}}
  \put(245,18){{\tiny $2m-3$}}
  \put(241,0){{\tiny $E_{2m-3}$}}
\put(260,12){\circle{8}}
\put(265,12){\line(1,0){25}}
  \put(278,18){{\tiny $2m-2$}}
  \put(278,0){{\tiny $E_{2m-2}$}}
\put(295,12){\circle{8}}
\put(300,14){\line(1,1){11}}
\put(300,10){\line(1,-1){11}}
\put(315,26){\circle{8}}
\put(315,-2){\circle{8}}
  \put(320,32){{\tiny $m-1$}}
  \put(320,23){{\tiny $E_{2m-1}$}}
  \put(320,4){{\tiny $m$}}
  \put(320,-5){{\tiny $E_{2m}$}}
\end{picture}
\par \vspace{5mm}
\underline{\bf The case of $(D_{2m+1})$}. 
The fundamental cycle $Z_0$ on the minimal resolution is given by 
\par \vspace{5mm}
\begin{picture}(400,30)(0,0)
    \thicklines
\put(80,10){$Z_0=$}
  \put(135,18){{\tiny $1$}} 
  \put(135,0){{\tiny $E_1$}}
\put(140,12){\circle{8}}
\put(145,12){\line(1,0){20}}
  \put(165,18){{\tiny $2$}}
  \put(165,0){{\tiny $E_2$}}
\put(170,12){\circle{8}}
\put(175,12){\line(1,0){20}}
  \put(195,18){{\tiny $2$}}
   \put(195,0){{\tiny $E_3$}}
\put(200,12){\circle{8}}
\put(205,12){\line(1,0){15}}
\put(224,10){$\cdots$}
\put(240,12){\line(1,0){15}}
  \put(253,18){{\tiny $2$}}
  \put(241,0){{\tiny $E_{2m-2}$}}
\put(260,12){\circle{8}}
\put(265,12){\line(1,0){25}}
  \put(285,18){{\tiny $2$}}
  \put(278,0){{\tiny $E_{2m-1}$}}
\put(295,12){\circle{8}}
\put(300,14){\line(1,1){11}}
\put(300,10){\line(1,-1){11}}
\put(315,26){\circle{8}}
\put(315,-2){\circle{8}}
  \put(322,32){{\tiny $1$}}
  \put(320,23){{\tiny $E_{2m}$}}
  \put(320,4){{\tiny $1$}}
  \put(320,-5){{\tiny $E_{2m+1}$}}
\end{picture}
\par \vspace{5mm}
If we put 
\[
Y_k = E_{2k+1} + 2 \sum_{i=2k+2}^{2m-1} E_i + E_{2m}+ E_{2m+1}
\]
\[
Z_k = Z_{k-1} + Y_k = \sum_{i=1}^{2k+1} iE_i + (2k+2)\sum_{i=2k+2}^{2m-1} E_i + (k+1)(E_{2m}+E_{2m+1})
\]
for each $k=1,\ldots,m-2$, then 
$0 < Y_{m-2} < \cdots < Y_2 < Y_1 \le Z_0$ are positive cycles and 
$Z_0,Z_1,\ldots,Z_{m-2}$ are special cycles. 
\par 
Now suppose that $Z_{m-2}+Y$ is a special cycle for some positive cycle $Y \le Y_{m-2}$. 
Since 
\[
\cup\{E \subset \Supp(Y_{m-2}) \,|\, EZ_{m-2}=0\} = E_{2m-1} \cup E_{2m} \cup E_{2m+1},
\]
is connected, we have that $Y=E_{2m+1} + E_{2m}+E_{2m+1}$. 
\par 
Set $Y_{m-1} = E_{2m+1} + E_{2m}+E_{2m+1}$. 
Conversely, $Z_{m-1} = Z_{m-2}+Y_{m-1}$ is a special cycle by Theorem \ref{Spcycle}. 
Note that $Z_{m-1}$ is the minimal one among those special cycles. 
\end{proof}

\section{Ulrich ideals of non-Gorenstein rational singularities} \label{Ulrich-nonGor}

In this section, we study Ulrich ideals of two-dimensional 
non-Gorenstein rational singularities. 
Notice that the maximal ideal $\fkm$ is always an 
Ulrich ideal of such a local ring.

\par 
We first show that any Ulrich ideal of a two-dimensional 
rational singularity is a \text{good} ideal. 
In order to obtain a characterzation of Ulrich ideals, we need 
the following definition. 

\par
Throughout this section, let $(A,\fkm)$ be a $2$-dimensional 
{\it non-Gorenstein} rational singularity 
and $\varphi \colon X \to \Spec A$ be 
the {\it minimal}  resolution of singularities. 

\begin{defn} \label{Uz}
Let $\widetilde{\varphi} \colon \widetilde{X} \to \Spec A$ 
be a resolution of singularities of $\Spec A$. 
Decompose $\widetilde\varphi$ as $\widetilde{\varphi} = \varphi \circ \pi$, where $\pi \colon \widetilde{X} \to X$. 
Let $\pi^{*}Z_0$ denote the pull-back of the fundamental cycle $Z_0$ 
on the minimal resoluition to $\widetilde{X}$. 
Then for any anti-nef cycle $Z$ on $\widetilde{X}$, 
we put 
\[
U(Z)= (\varphi^{*}Z_0 \cdot Z)(p_a(Z)-1) + Z^2, 
\] 
where $p_a(Z)$ denotes the virtual genus of $Z$; see the remark below.  
\end{defn}

\begin{thm} \label{Uideal-chara}
Let $(A,\fkm)$ be a two-dimensional rational singularity. 
Let $I$ be an $\fkm$-primary ideal with $\mu_A(I) > 2$. 
Then the following conditions are equivaelnt$:$
\begin{enumerate}
 \item[$(1)$]  $I$ is an Ulrich ideal. 
 \item[$(2)$]  $\e_I^0(A) = (\mu(I)-1)\cdot \ell_A(A/I)$. 
 \item[$(3)$]  $I$ is an integrally closed ideal represented on the minimal resolution 
 of singularities $\varphi \colon X \to \Spec A$ such that 
 $I\mathcal{O}_X=\mathcal{O}_X(-Z)$, $I=\H^0(X,\mathcal{O}_X(-Z))$ and 
 $U(Z)=0$. 
\end{enumerate}
\end{thm}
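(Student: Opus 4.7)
The plan is to translate each of the three conditions into a numerical statement about an anti-nef cycle on the minimal resolution via Kato's Riemann--Roch formula (Lemma \ref{RR}), and then verify that they coincide. The first step is to establish the following dictionary for any $\fkm$-primary integrally closed ideal $I = I_Z = H^0(X, \mathcal{O}_X(-Z))$ represented on the minimal resolution $X$ by an anti-nef cycle $Z$:
\[
e_I^0(A) \;=\; -Z^2, \qquad \ell_A(A/I) \;=\; 1 - p_a(Z), \qquad \mu_A(I) \;=\; 1 - Z \cdot Z_0.
\]
The first two are Lemma \ref{good-known}(3) and Lemma \ref{RR}. For the third, I would write $\mu_A(I) = \ell_A(A/\fkm I) - \ell_A(A/I)$, observe that $\fkm I = I_{Z+Z_0}$ by Lemma \ref{good-known}(2), apply Riemann--Roch to $Z + Z_0$, and simplify using $p_a(Z_0) = 0$ (immediate from $\ell_A(A/\fkm) = 1$) together with the standard identity $p_a(Z + Z_0) = p_a(Z) + p_a(Z_0) + Z \cdot Z_0 - 1$.

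Substituting this dictionary into the definition of $U(Z)$ yields the key identity
\[
U(Z) \;=\; (Z\cdot Z_0)(p_a(Z) - 1) + Z^2 \;=\; (\mu_A(I) - 1)\,\ell_A(A/I) - e_I^0(A),
\]
so, under the integrally-closed-and-represented hypothesis of (3), the condition $U(Z) = 0$ is exactly condition (2); this gives $(2)\Leftrightarrow(3)$ in that range. The equivalence $(1)\Leftrightarrow(2)$ is Proposition \ref{Uchar}(1) specialised to $d = 2$ (so that $\mu_A(I) - d + 1 = \mu_A(I) - 1$), provided $I$ is stable; and any $\fkm$-primary integrally closed ideal in a two-dimensional rational singularity is stable by Lipman's theorem ($\overline{I}{}^2 = Q\overline{I}$). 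This already disposes of the implication $(3)\Rightarrow(1)$, and together with the identity also gives $(1)\Rightarrow$ ($U(Z)=0$) \emph{provided} the Ulrich ideal $I$ is known to be integrally closed and represented on $X$.

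The main obstacle is therefore showing that an Ulrich ideal (or, more generally, any $\fkm$-primary ideal satisfying (2) with $\mu_A(I) > 2$) is integrally closed and represented on the \emph{minimal} resolution. For this, I would compare $I$ with its integral closure $\overline{I}$: since $\overline{I}$ is integrally closed and hence stable (Lipman), Proposition \ref{Uchar}(1) gives $e_{\overline{I}}^0(A) \le (\mu_A(\overline{I}) - 1)\ell_A(A/\overline{I})$; combined with $e_I^0(A) = e_{\overline{I}}^0(A)$, the strict inequality $\ell_A(A/I) > \ell_A(A/\overline{I})$ whenever $I \subsetneq \overline{I}$, and the equality of condition (2), a careful arithmetic comparison forces $I = \overline{I}$. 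The hypothesis $\mu_A(I) > 2$ is essential here, since it rules out the parameter-ideal case where (2) is vacuous. Finally, the anti-nef cycle associated to $\overline{I}$ must be defined on $X$ itself rather than on a higher blowup; I would deduce this from the minimality of $X$ (no $(-1)$-curves) together with the constraint that the numerical quantity $U(Z)$ computed on any dominating resolution must vanish precisely when $Z$ is the pullback of a cycle from $X$, thus closing the loop $(1),(2)\Rightarrow(3)$.
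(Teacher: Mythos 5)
Your dictionary $e_I^0(A)=-Z^2$, $\ell_A(A/I)=1-p_a(Z)$, $\mu_A(I)=1-Z\cdot Z_0$ and the resulting identity $U(Z)=(\mu_A(I)-1)\,\ell_A(A/I)-e_I^0(A)$ are correct, and they are exactly what the paper uses for $(3)\Rightarrow(2)$ (and, via stability of integrally closed ideals plus Proposition \ref{Uchar}(1), for $(3)\Rightarrow(1)$). You also locate the real difficulty correctly, but neither of your two proposed fixes closes it. First, integral closedness: from $e_I^0(A)=e_{\overline I}^0(A)\le(\mu_A(\overline I)-1)\ell_A(A/\overline I)$, condition $(2)$, and $\ell_A(A/\overline I)<\ell_A(A/I)$ you can only conclude $\mu_A(\overline I)>\mu_A(I)$, which is not a contradiction; there is no a priori upper bound on $\mu_A(\overline I)$ in terms of $\mu_A(I)$, so no arithmetic comparison of just these three invariants can force $I=\overline I$. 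The paper takes a structural route instead: knowing $(1)\Leftrightarrow(2)$ from \cite[Lemma 2.3]{GOTWY}, it uses that an Ulrich ideal satisfies $I=Q\colon I$ (\cite[Corollary 2.6]{GOTWY}) together with $\overline I^{\,2}=Q\overline I$, giving the chain $I\subseteq\overline I\subseteq Q\colon\overline I\subseteq Q\colon I=I$.

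Second, the descent to the minimal resolution. Your guiding principle --- that $U(Z)$ on a dominating resolution ``vanishes precisely when $Z$ is the pullback of a cycle from $X$'' --- is false: $U$ is invariant under pullback (intersection numbers and $p_a$ are preserved), so $U(\pi^{*}W)=U(W)$, which is nonnegative but usually nonzero. What is actually needed, and what the paper proves by a direct computation, is strict monotonicity of $U$ under $(-1)$-curve contributions: if $I$ is represented on $\widetilde X$ but not on the contraction $X'$ of a $(-1)$-curve $E$, write $Z=\psi^{*}Z'+nE$ with $n\ge1$ and compute
\[
U(Z)-U(Z')=\bigl((\pi')^{*}Z_0\cdot Z'+2\bigr)\tfrac{(nE)^2}{2}+\tfrac{n(K\cdot E)}{2}\bigl((\pi')^{*}Z_0\cdot Z'\bigr)>0,
\]
using $(\pi')^{*}Z_0\cdot Z'\le-2$ and $E^2=K\cdot E=-1$; then $U(Z)>U(Z')\ge0$ contradicts $U(Z)=0$. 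This computation is the missing content; minimality of $X$ by itself gives nothing.
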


\begin{proof}
$(1)\Longleftrightarrow (2)$ follows from \cite[Lemma 2.3]{GOTWY}.
\par \vspace{2mm}
$(3) \Longrightarrow (2):$ 
Any integrally closed ideal $I$ in a two-dimensional rational singularity is stable.
Moreover, $U(Z)=0$ means that $\e_{I}^0(A) = (\mu(I)-1)\cdot \ell_A(A/I)$. 
Thus the assertion immediately follows from this. 
\par \vspace{2mm}
$(2) \Longrightarrow (3):$ 
Since $I$ is an Ulrich ideal by (1), we have that $I=Q\colon I$ for any minimal 
reduction $Q$ of $I$ by \cite[Corollary 2.6]{GOTWY}.  
Then as $\overline{I}^2 = Q\overline{I}$, we get 
$I \subseteq \overline{I} \subseteq Q \colon \overline{I} \subseteq Q \colon I$.   
Hence $I=\overline{I}$ is integrally closed. 
\par
Let $\widetilde{\varphi} \colon \widetilde{X} \to \Spec A$ be a resolution of singularities 
so that $I=\H^0(\widetilde{X},\mathcal{O}_{\widetilde{X}}(-Z))$ and 
$I \mathcal{O}_{\widetilde{X}} = \mathcal{O}_{\widetilde{X}}(-Z)$ is invertible 
for some anti-nef cycle $Z$ on $\widetilde{X}$. 
Then (2) implies that $U(Z)=0$. 
\par
Now suppose that $I$ is \textit{not} represented on the minimal resolution of singularities
$\varphi \colon X \to \Spec A$. 
Then there exists a contraction $\psi \colon \widetilde{X} \to X'$ 
of a $(-1)$-curve $E$ on $\widetilde{X}$ 
such that $I$ is not represented on $X'$.  
Consider the following commutative diagram: 
\par \vspace{4mm}
\begin{picture}(400,35)
    \thicklines
  \put(90,25){$\widetilde{X}$}
  \put(105,28){\vector(1,0){40}}
  \put(150,25){$X$}
  \put(102,20){\vector(1,-1){12}}
  \put(115,0){$X'$}
  \put(132,10){\vector(1,1){12}}
\put(98,10){$\psi$}
\put(120,32){$\pi$}
\put(142,10){$\pi'$}
\end{picture}

\par \vspace{4mm} \noindent
Then we may assume that $Z= \psi^{*}Z'+nE$ for some anti-nef cycle $Z'$ on $X'$ 
and an integer $n \ge 1$. 
Note that $\pi^{*}Z_0 \cdot E = \psi^{*}Z' \cdot E =0$; 
see e.g. \cite[Fact 7.7]{GIW}. 
Then 
\begin{eqnarray*}
U(Z)-U(Z') & = & \left(\pi^{*}Z_0 \cdot (\psi^{*}Z'+nE)\right)
(p_a(\psi^{*}Z')+p_a(nE)+\psi^{*}Z'\cdot nE -2)  \\
&& + (\psi^{*}Z'+nE)^2- (\pi^{*}Z_0\cdot \psi^{*}Z')\left(p_a(\psi^{*}Z')-1\right)-(\psi^{*}Z')^2 \\
&=& (\pi^{*}Z_0 \cdot \psi^{*}Z')(p_a(nE)-1)+(nE)^2 \\
&=& \left((\pi')^{*}Z_0 \cdot Z' + 2\right) \frac{(nE)^2}{2} 
+ \frac{n(K_X \cdot E)}{2}
\left((\pi')^{*}Z_0 \cdot Z' \right).
\end{eqnarray*} 
Since $(\pi')^{*}Z_0 \cdot Z' \le -2$ and $E^2 = K_X \cdot E = -1$, we get 
$U(Z) > U(Z') \ge 0$.
This is a contradiction.  
\end{proof}

In what follows, we always assume that $\varphi \colon X \to \Spec A$ be 
the minimal resolution of singularities and 
$I\mathcal{O}_X = \mathcal{O}_X(-Z)$ is invertible 
and $I = \H^0(X,\mathcal{O}_X(-Z))$ for some anti-nef cycle $Z$ on $X$. 
Let $\varphi^{-1}(\fkm) = \bigcup_i E_i$ denote the exceptional divisor on $X$ with the irreducible 
components $\{E_i\}_{1 \le i \le r}$. 
Let $Z_0$ (resp. $K$) denotes the fundamental cycle 
(resp. the canonical divisor) on $X$. 
Notice that $Z_0E \le 0$ and $KE = -E^2 - 2$ for all exceptional curves $E$.    
\par
The next target is to characterize Ulrich cycles in terms of dual graphs.  
In order to do that, we recall the sequence of anti-nef cycles introduced in Lemma \ref{FiltCycles}.
Assume that $Z\ne Z_0$ is an anti-nef cycle on $X$. 
Then we can find the following anti-nef cycles $Z_1,\ldots,Z_s$ and 
positive cycles $Y_1,\ldots,Y_s$ so that 
$0 < Y_s \le Y_{s-1} \le \cdots \le Y_1 \le Z_0$: 
\begin{equation}
\left\{
\begin{array}{rcl}
Z=Z_s &=& Z_{s-1} + Y_s, \\ 
Z_{s-1} & = & Z_{s-2} + Y_{s-1}, \\
&\vdots& \\
Z_2 & = & Z_1 + Y_2, \\
Z_1 & = & Z_0 + Y_1. 
\end{array}
\right.
\end{equation}

\par
The following lemma plays a key role in the proof of the main theorem in
this section. 

\begin{lem} \label{Key-Ul}
Let $Z$, $Z'$ be anti-nef cycles on $X$ with $Z' =  Z+Y$, 
where $Y$ is a positive cycle. 
Then$:$
\begin {enumerate}
\item[$(1)$]  
$U(Z') - U(Z) 
= (YZ_0)\big\{(p_a(Z)-1)+(p_a(Y)-1)\big\}
 +(YZ)(Z'Z_0+2)\\
\hspace{8cm}
+ (p_a(Y)-1)(ZZ_0+2)-KY$.
\item[$(2)$]  Assume that $0 \ne Y \le Z_0$ and $e=\e_{\fkm}^0(A) \ge 3$. 
Then $U(Z')\ge U(Z)$ holds true, and equality holds if and only if 
$YZ=YZ_0 = p_a(Y) = (Z-Z_0)Z_0 = K(Z_0-Y)=0$. 
\end{enumerate}
\end{lem}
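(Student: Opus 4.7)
For \textbf{(1)}, the plan is a direct algebraic expansion. Setting $Z' = Z + Y$ gives $(Z')^2 = Z^2 + 2YZ + Y^2$ and $Z_0 Z' = Z_0 Z + Z_0 Y$, while the virtual-genus identity $p_a(Z') = p_a(Z) + p_a(Y) + YZ - 1$ recorded in Section \ref{Pre} yields $p_a(Z')-1=(p_a(Z)-1)+(p_a(Y)-1)+YZ$. Substituting into $U(Z')=(Z_0Z')(p_a(Z')-1)+(Z')^2$ and subtracting $U(Z)=(Z_0Z)(p_a(Z)-1)+Z^2$ produces a six-term sum together with the leftover pieces $2YZ$ and $Y^2$. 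I would trade $Y^2$ using the adjunction identity $Y^2+KY=2(p_a(Y)-1)$; the extra $KY$ becomes the trailing $-KY$, the extra $2(p_a(Y)-1)$ combines with $(Z_0Z)(p_a(Y)-1)$ to form $(p_a(Y)-1)(ZZ_0+2)$, and the leftover $2YZ$ combines with $(YZ)(Z_0Z')$ to yield $(YZ)(Z'Z_0+2)$. This is exactly the claimed formula.

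For \textbf{(2)}, write the right-hand side of (1) as $A_1+A_2+A_3+(A_4-KY)$, where $A_1=(YZ_0)(p_a(Z)-1)$, $A_2=(YZ_0)(p_a(Y)-1)$, $A_3=(YZ)(Z'Z_0+2)$, and $A_4=(p_a(Y)-1)(ZZ_0+2)$. Under the hypotheses $0<Y\le Z_0$, $e\ge 3$, and $Z>0$ (whence $Z\ge Z_0$ and $Z'\ge Z_0$, since $Z_0$ is the smallest positive anti-nef cycle), I first tabulate signs: $YZ_0, YZ\le 0$ because $Z_0,Z$ are anti-nef and $Y\ge 0$; $p_a(Y)-1, p_a(Z)-1\le -1$ by Artin's rationality bound; and $ZZ_0+2, Z'Z_0+2\le Z_0^2+2 = 2-e\le -1$. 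Hence $A_1,A_2\ge 0$ with equality iff $YZ_0=0$, and $A_3\ge 0$ with equality iff $YZ=0$.

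The central step is the bound $A_4-KY\ge 0$, which I would establish through two matched one-sided estimates. From $p_a(Y)-1\le -1$ and $ZZ_0+2\le -(e-2)$ one gets $A_4\ge e-2$. On the other hand, minimality of the resolution forces $E_i^2\le -2$ for every exceptional component, so by adjunction $KE_i=-E_i^2-2\ge 0$; since $0\le Z_0-Y$, this forces $K(Z_0-Y)\ge 0$, i.e.\ $KY\le KZ_0$. Applying Lemma \ref{RR} to $\fkm=I_{Z_0}$ yields $p_a(Z_0)=1-\ell_A(A/\fkm)=0$, whence $KZ_0=-Z_0^2-2=e-2$. Combining, $A_4-KY\ge(e-2)-(e-2)=0$, with equality forcing $p_a(Y)=0$ and $(Z-Z_0)Z_0=0$ in the first estimate and $K(Z_0-Y)=0$ in the second. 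Together with the vanishing conditions for $A_1,A_2,A_3$, this gives precisely the five equalities in the statement. The delicate point is the matched bound $A_4\ge e-2\ge KY$: this is exactly where the hypothesis $e\ge 3$, the minimality of $X$ (through $KE_i\ge 0$), and the identity $p_a(Z_0)=0$ all enter in an essential way; the other inequalities reduce to pure sign bookkeeping.
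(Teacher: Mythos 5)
Your proposal is correct and follows essentially the same route as the paper: part (1) is the same direct expansion via $p_a(Z+Y)=p_a(Z)+p_a(Y)+YZ-1$ and the adjunction identity $2(p_a(Y)-1)=KY+Y^2$, and part (2) rests on the same four-term sign analysis using $Z\ge Z_0$, anti-nefness, rationality, and $KE\ge 0$ on the minimal resolution. The only cosmetic difference is in the crucial term: the paper verifies $(p_a(Y)-1)(ZZ_0+2)-KY\ge 0$ through the exact identity $(p_a(Y)-1)(ZZ_0+2)-KY=p_a(Y)(ZZ_0+2)-(Z-Z_0)Z_0-K(Y-Z_0)$, a sum of three manifestly nonnegative terms, whereas you sandwich it as $(p_a(Y)-1)(ZZ_0+2)\ge e-2\ge KY$; since these are integer-valued bounds with $e-2\ge 1$, your equality analysis pins down exactly the same five conditions as the paper's.
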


\begin{proof}
Since $p_a(Z+Y) = p_a(Z)+p_a(Y)+YZ-1$ by definition, we have 
\begin{eqnarray*}
U(Z')-U(Z) & =& 
(ZZ_0+YZ_0) (p_a(Z)-1+p_a(Y)-1+YZ)+(Z^2+2YZ+Y^2)\\
&& - (ZZ_0)(p_a(Z)-1)-Z^2 \\
&=& (YZ_0)\big\{(p_a(Z)-1) + (p_a(Y)-1) \big\} + (YZ)(ZZ_0+YZ_0+2) \\
&& + (p_a(Y)-1)(ZZ_0) + Y^2 \\
& = & (YZ_0)\big\{(p_a(Z)-1) + (p_a(Y)-1) \big\} + (YZ)(Z'Z_0+2) \\
&&+ (p_a(Y)-1)(ZZ_0+2) - KY,  
\end{eqnarray*}
where the last equality follows from $2(p_a(Y)-1) = KY+Y^2$. 

\par \vspace{2mm}
(2) Assume that $Y \le Z_0$. As $X \to \Spec A$ is the minimal resolution, 
we have that $KY \le KZ_0$ because $KE \ge 0$ for all curves $E$ on $X$. 
Since $Z_0$ is anti-nef and $Z-Z_0$, $Y$ are positive, we get 
\[
 Z'Z_0 + 2 = (Z-Z_0)Z_0 + YZ_0 + (Z_0^2+2) \le Z_0^2 +2= -e+2 < 0.
\]
Moreover, $p_a(Z_0)=0$ implies that 
\[
(p_a(Y)-1)(ZZ_0+2)-KY 
= p_a(Y)(ZZ_0+2) -(Z-Z_0)Z_0 -K(Y-Z_0) \ge 0
\]
and equality holds if and only if $p_a(Y)=(Z-Z_0)Z_0=K(Y-Z_0)=0$. 
\par
Note that $YZ_0$, $YZ \le 0$ and $p_a(Z)-1 + p_a(Y)-1 < 0$. 
Hence $U(Z') \ge U(Z)$ and equality holds if and only if $YZ_0=YZ=0$ and 
$p_a(Y)=(Z-Z_0)Z_0=K(Y-Z_0)=0$. 
\end{proof}

\par 
The main result in this section is the following theorem, which enables us to 
determine all Ulrich ideals of a two-dimensional (non-Gorenstein) 
rational singularity. 
For a positve cycle $Z$ on $X$, we write $Z=\sum_{E} Z_E E$, 
where $Z_E$ is a nonnegative integer.  

\begin{thm} \label{All-Uideal}
Let $(A,\fkm)$ be a two-dimensional rational singularity with 
$e=\e_{\fkm}^0(A)\ge 3$, and 
let $\varphi \colon X \to \Spec A$ be the minimal resolution of singularities. 
Set $Z_0 = \sum_{E} n_E E$, the fundamental cycle on $X$. 
Let $Z$ be an anti-nef cycle on $X$ with 
$I\mathcal{O}_X=\mathcal{O}_X(-Z)$ and $I=\H^0(X,\mathcal{O}_X(-Z))$.  
Then the following conditions are equivalent$:$ 
\begin{enumerate}
 \item[$(1)$]  $I$ is an Ulrich ideal, that is, $Z$ is an Ulrich cycle on $X$. 
 \item[$(2)$]  There exist a sequence of anti-nef cycles $Z_1,\ldots,Z_s$ and 
  a sequence of positive cycles 
$0< Y_s \le \cdots \le Y_1 \le Z_0$ for some $s \ge 1$   
  so that 
  \begin{equation}
  \left\{
  \begin{array}{rcl}
   Z=Z_s &=& Z_{s-1} + Y_s, \\
    Z_{s-1} & =& Z_{s-2} + Y_{s-1}, \\
    & \vdots & \\
    Z_1 & = & Z_0 + Y_1.
  \end{array}
  \right.
  \end{equation}
and  $Y_kZ_{k-1} = p_a(Y_k) = K(Z_0-Y_k) =0$ for every $k=1,\ldots,s$. 
\end{enumerate}
\par
When this is the case, the following conditions are satisfied. 
  \begin{enumerate}
   \item[$(a)$]  $\{E \,|\, E^2 \le -3\}$ is contained in $\Supp(Y_1)$.  
   \item[$(b)$]  $\Supp(Y_k)$ is given as one of the connected components of 
$\{E\,|\, EZ_{k-1}=0\}$ in $\{E\,|\,EZ_0=0\}$. 
 \item[$(c)$]  $Y_k$ is the fundamental cycle on $\Supp(Y_k)$. 
 \item[$(d)$]  $\coeff_E Z_0 =\coeff_E Y_k$ for every $E$ with $E^2 \le -3$.  
 \end{enumerate}

If, in addition, 
we put $I_k=\H^0(X,\mathcal{O}_X(-Z_k))$, 
then $I_k$ is an Ulrich ideal so that 
\[
 \fkm = I_0 \supseteq I_1 \supseteq \cdots \supseteq I_s = I \quad 
\text{and} \quad \ell_A(A/I)=s+1.
\] 
\end{thm}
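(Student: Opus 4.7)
The plan is to build on Theorem \ref{Uideal-chara}, which reduces condition (1) to the assertion that $I$ is integrally closed, represented on the minimal resolution $X$, and satisfies $U(Z)=0$. First I will apply Lemma \ref{FiltCycles} to obtain the filtration $Z_0 \le Z_1 \le \cdots \le Z_s = Z$ with $Z_k = Z_{k-1}+Y_k$ and $0 < Y_s \le \cdots \le Y_1 \le Z_0$. A direct computation, using $p_a(Z_0)=0$, shows
\[
U(Z_0) = (Z_0 \cdot Z_0)(p_a(Z_0)-1) + Z_0^2 = -Z_0^2 + Z_0^2 = 0.
\]

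For $(1) \Rightarrow (2)$, I apply Lemma \ref{Key-Ul}(2) to every pair $(Z_{k-1}, Z_k)$ in the filtration; since $Y_k \le Z_0$ and $e \ge 3$, the hypotheses are satisfied. This yields the chain $0 = U(Z_0) \le U(Z_1) \le \cdots \le U(Z_s) = 0$, forcing equality at every step. The equality case of Lemma \ref{Key-Ul}(2) then gives $Y_k Z_{k-1} = Y_k Z_0 = p_a(Y_k) = (Z_{k-1}-Z_0) Z_0 = K(Z_0-Y_k) = 0$ for every $k$, whose first, third, and fifth entries are precisely the three conditions in (2).

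For $(2) \Rightarrow (1)$, I prove by induction on $k$ that $U(Z_k) = 0$. The base case $U(Z_0)=0$ is already verified. For the inductive step, substituting $p_a(Y_k) = 0$, $Y_k Z_{k-1} = 0$, and $KY_k = KZ_0$ (the last from $K(Z_0-Y_k)=0$), together with $KZ_0 = -2 - Z_0^2$, into the formula of Lemma \ref{Key-Ul}(1) simplifies it to
\[
U(Z_k) - U(Z_{k-1}) = (Y_k Z_0)(p_a(Z_{k-1})-2) - (Z_{k-1}-Z_0) \cdot Z_0.
\]
The inductive hypothesis, combined with the equality characterization in Lemma \ref{Key-Ul}(2), supplies all five vanishings at earlier steps; in particular $Y_j Z_0 = 0$ for $j<k$, hence $(Z_{k-1}-Z_0) Z_0 = \sum_{j<k} Y_j Z_0 = 0$. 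The residual task is to extract $Y_k Z_0 = 0$ from $Y_k Z_{k-1} = 0$; this is the main technical obstacle, which I expect to resolve by exploiting $\Supp(Y_k) \subseteq \Supp(Y_{k-1})$, so that for every $E \in \Supp(Y_k)$ the relation $E Z_{k-1} = 0$ together with the prior-stage vanishings pins down $E Z_0 = 0$. Once $U(Z_s) = 0$ is established, Theorem \ref{Uideal-chara} gives (1).

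It remains to deduce the listed consequences. Properties (a) and (d) both come from $K(Z_0-Y_k)=0$: writing $Z_0 - Y_k = \sum_E c_E E$ with $c_E \ge 0$ and using that on the minimal resolution $KE \ge 0$ with $KE > 0$ exactly when $E^2 \le -3$, the identity $0 = \sum_E c_E (KE)$ forces $c_E = 0$ whenever $E^2 \le -3$, i.e., $\coeff_E Y_k = \coeff_E Z_0 \ge 1$. Property (b) combines $Y_k Z_{k-1} = Y_k Z_0 = 0$ (anti-nefness of $Z_{k-1}$ and $Z_0$ then give $\Supp(Y_k) \subset \{E : E Z_{k-1} = E Z_0 = 0\}$) with $p_a(Y_k) = 0$, which forces $\Supp(Y_k)$ to be connected; property (c), that $Y_k$ is the fundamental cycle on its support, follows from $p_a(Y_k)=0$ by the same minimality argument as in the proof of Theorem \ref{Spcycle}. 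The length identity $\ell_A(A/I_k) = k+1$ is a direct application of Lemma \ref{Filtlength} using $Y_k Z_{k-1} = p_a(Y_k) = 0$, and each $I_k$ inherits the Ulrich property from $U(Z_k)=0$ via Theorem \ref{Uideal-chara}, giving the strict descending chain $\fkm = I_0 \supsetneq I_1 \supsetneq \cdots \supsetneq I_s = I$.
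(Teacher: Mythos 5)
Your proposal follows the paper's own route essentially step for step: Theorem \ref{Uideal-chara} to translate condition (1) into $U(Z)=0$, Lemma \ref{FiltCycles} to produce the filtration, the monotonicity and equality case of Lemma \ref{Key-Ul}(2) for $(1)\Rightarrow(2)$, and the simplified difference formula from Lemma \ref{Key-Ul}(1) for the converse, with the consequences (a)--(d) extracted from the vanishings exactly as in the paper. The one step you flag as the "main technical obstacle," namely $Y_kZ_0=0$, closes precisely by the mechanism you name: the prior-stage vanishing $Y_{k-1}Z_0=0$ together with the anti-nefness of $Z_0$ forces $EZ_0=0$ for every $E\in\Supp(Y_{k-1})\supseteq\Supp(Y_k)$, which is also how the paper disposes of it.
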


\begin{proof}
Take a sequence as in (2). 
\par \vspace{2mm} \par \noindent 
$(1) \Longrightarrow (2):$ 
Lemma \ref{Key-Ul} implies that 
\[
0 = U(Z) = U(Z_s) \ge U(Z_{s-1}) \ge \cdots \ge U(Z_1) \ge U(Z_0)=0. 
\]
Hence all $Z_k$ are Ulrich cycles and 
\[
Y_kZ_{k-1} = Y_kZ_0 = p_a(Y_k) = (Z_k-Z_0)Z_0 = K(Z_0-Y_k)=0
\]
for every $k=1,\ldots,s$. 
By a similar argument as in the proof of Theorem \ref{Spcycle}, we have $\ell_A(A/I)=s+1$. 
\par 
If $E^2 \le -3$, then $KE=-E^2-2 > 0$. 
Thus $K(Z_0-Y_1) =0$ implies that $\coeff_E Z_0 =  \coeff_E Y_1$ for every $E$ with 
$E^2 \le -3$. 
In particular, $\Supp(Y_k) \supseteq \{E\,|\, E^2 \le -3\}$. 
On the other hand, $Y_kZ_0=0$ implies that 
$\Supp(Y_i) \subseteq \{E\,|\, EZ_0=0\}$ because $Z_0$ is an anti-nef cycle. 
\par \vspace{2mm}
Now suppose (2). 
Fix $i$ with $1 \le k \le r$. 
Since $Z_{k-1}$ is anti-nef and $Y_kZ_{k-1}=0$, a similar argument to the above 
yields that $\{E \,|\, E^2 \le -3\} \subseteq \Supp(Y_k) \subseteq 
\{E \,|\,EZ_{k-1}=0\}$. 
\par 
As $p_a(Y_k)=0$, $\Supp(Y_k)$ is connected. 
Moreover, $\Supp(Y_k)$ is one of the connected components of 
$\{E\,|\,EZ_{k-1}=0\}$. 
Indeed, if there exists a curve $E \notin \Supp(Y_k)$ such that $EE'>0$
for some $E' \in \Supp(Y_k)$, then $EZ_{k-1}< 0$ since $EY_k \ge 1$ and $EZ_{k-1}+EY_k = EZ_k \le 0$.
\begin{description}
\item[Claim] $Y_k$ is the fundamental cycle on $\Supp(Y_k)$.  
\end{description}
Take $E \in \Supp(Y_k)$. 
As $Y_kZ_{k-1}=0$, we have $EZ_{k-1} =0$. 
If $EY_k > 0$, then $EY_k = EZ_k \le 0$. 
This is a contradiction. 
Hence $EY_k\le 0$. Namely, $Y_k$ is anti-nef. 
Moreover, if $Y_k$ is not the fundamental cycle on $\Supp(Y_k)$, 
then we know that $p_a(Y_k) \le -1$. 
This contradicts the assumption $p_a(Y_k)=0$. 
Hence $Y_k$ must be the fundamental cycle on $\Supp(Y_k)$. 
\par \vspace{2mm}
To see $(2) \Longrightarrow (1)$, we notice that 
$(c)$ implies that $p_a(Y_k)=0$. 
Condition $(b)$ means that $Y_kZ_{k-1} =0$. 
Hence $Y_kZ_0=0$. 
Note that the equalities $Y_kZ_0=Y_{k-1}Z_0=\cdots =Y_1Z_0=0$ yield $(Z_k-Z_0)Z_0=0$.  
Condition $(d)$ implies that $K(Z_0-Y_k)=0$. 
Therefore 
$U(Z)=U(Z_r)=\cdots = U(Z_1)=U(Z_0)=0$, 
as required. 
\end{proof}

The following assertion does not hold true without the assumption that $A$ is rational; 
see \cite[Example 2.2]{GOTWY}.

\begin{cor} \label{Gor-Ul}
Let $A$ be a two-dimensional rational singularity. 
If $I$ is an Ulrich ideal of $A$, then $I$ is a special ideal and $A/I$ is Gorenstein. 
\end{cor}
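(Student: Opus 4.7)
The plan is to treat separately the cases of $A$ being a rational double point (Gorenstein) and $A$ being non-Gorenstein, since these correspond to $e_\fkm^0(A)=2$ and $e_\fkm^0(A)\ge 3$, the dichotomy underlying Theorem \ref{All-Uideal}.

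If $A$ is a rational double point, then by Theorem \ref{Syz} the module $\Syz_A^2(A/I)$ is an Ulrich module with respect to $I$, and Theorem \ref{SPvsUlrich-RDP} promotes it to a special Cohen--Macaulay module with respect to $I$; hence $I$ is a special ideal. The Gorensteinness of $A/I$ is then immediate from Proposition \ref{Uchar}(2)(c).

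Now suppose $A$ is non-Gorenstein. To show $I$ is special, I would invoke Theorem \ref{All-Uideal}, which furnishes the anti-nef cycles $Z_0<Z_1<\cdots<Z_s=Z$ with $Z_k=Z_{k-1}+Y_k$, the relations $Y_kZ_{k-1}=p_a(Y_k)=K(Z_0-Y_k)=0$, and the structural conditions (a)--(d). Since $A$ is non-Gorenstein, the minimal resolution must contain some exceptional curve $E_j$ with $E_j^2\le -3$ (otherwise $K\cdot E=0$ for every component, forcing $K=0$ and $A$ Gorenstein). Condition (d) then pins down $\coeff_{E_j}Y_k=n_j$ for every $k$, while (b) and (c) say each $Y_k$ is the fundamental cycle on its connected support. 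Combined with $Y_kZ_{k-1}=p_a(Y_k)=0$, these are exactly the hypotheses of condition (3) of Theorem \ref{Spcycle} with $i=j$, so $M_j$ is a special Cohen--Macaulay module with respect to $I$; since Theorem \ref{Uideal-chara}(3) guarantees $I$ is represented on the minimal resolution, $I$ is a good --- and hence special --- ideal.

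The hard part will be showing $A/I$ is Gorenstein in the non-Gorenstein case. My plan is to compute $\Soc(A/I)=(I:\fkm)/I$ directly from the cycle data, using that the filtration $\fkm=I_0\supsetneq I_1\supsetneq\cdots\supsetneq I_s=I$ of Ulrich ideals has steps of length one (by Lemma \ref{RR} applied to $Y_kZ_{k-1}=p_a(Y_k)=0$), and that the containment $I_{s-1}\fkm\subseteq I$ --- which follows from $Y_s\le Z_0$ via Lipman's product formula for integrally closed ideals (Lemma \ref{good-known}(2)) --- places the line $I_{s-1}/I\cong k$ inside $\Soc(A/I)$. For the reverse inclusion, I would argue that the exceptional part of $\mathrm{div}(f)$ is always anti-nef, so any $f\in I:\fkm$ forces this exceptional part to dominate the anti-nef closure $W^{\ast}$ of $Z-Z_0$; the remaining and principal obstacle is to identify $W^{\ast}$ with $Z_{s-1}$ by exploiting the orthogonality $Y_sZ_{s-1}=0$ together with property (d), after which a final application of Lemma \ref{RR} yields $\ell_A((I:\fkm)/I)=1$, proving that $A/I$ has type one.
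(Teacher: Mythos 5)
Your treatment of the rational double point case and of the speciality assertion is correct and runs along the same lines as the paper: Theorem \ref{All-Uideal} supplies the filtration $Z_0<Z_1<\cdots<Z_s=Z$ with $Y_kZ_{k-1}=p_a(Y_k)=K(Z_0-Y_k)=0$, non-Gorensteinness does force a component $E_j$ with $E_j^2\le -3$ for exactly the reason you give, and condition (d) then verifies hypothesis (3) of Theorem \ref{Spcycle} for $i=j$. Your observations that the filtration has unit steps and that $\fkm I_{s-1}\subseteq I$ (via Lemma \ref{good-known}(2) and $Y_s\le Z_0$) are also correct.

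The gap is in the Gorensteinness of $A/I$ when $A$ is not Gorenstein. What you actually establish is only $I_{s-1}/I\subseteq \Soc(A/I)$, which proves nothing: every Artinian local ring has nonzero socle. The statement you need is the reverse inclusion $I:\fkm\subseteq I_{s-1}$, and you reduce it to the identity $W^{\ast}=Z_{s-1}$ for the anti-nef closure $W^{\ast}$ of $Z-Z_0$ --- and then explicitly leave that identity unproved. Observe that the inequality which comes for free from $Y_s\le Z_0$ is $Z-Z_0\le Z_{s-1}$, whence $W^{\ast}\le Z_{s-1}$ by minimality of the anti-nef closure; this is the \emph{wrong} direction, giving $I_{s-1}\subseteq I_{W^{\ast}}$ rather than the needed $I_{W^{\ast}}\subseteq I_{s-1}$. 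The bound $W^{\ast}\ge Z_{s-1}$ is not a formal consequence of the filtration data and would itself require a separate induction exploiting conditions (a)--(d), of roughly the same weight as the whole remaining proof. The paper sidesteps all of this with an elementary ideal-theoretic induction along the chain $\fkm=I_0\supsetneq I_1\supsetneq\cdots\supsetneq I_s=I$: using only $\ell_A(I_{k-1}/I_k)=1$, $\fkm I_{k-1}\subseteq I_k$ and $\fkm^{s}\not\subseteq I\supseteq\fkm^{s+1}$, it produces a minimal generating set $\{u_1,\ldots,u_p,t\}$ of $\fkm$ with $I=(u_1,\ldots,u_p,t^{s+1})$; then $\fkm/I$ is generated by the image of $t$ alone, so $A/I$ is a quotient of $k[[t]]$ and in particular Gorenstein. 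As written, your argument is incomplete at its decisive step; either prove $W^{\ast}\ge Z_{s-1}$ or replace the socle computation by an argument of the latter kind.
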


\begin{proof}
Denote by $\fkm$ the maximal ideal of $R$.
We may assume that $A$ is not Gorenstein, that is, $e=\e_\fkm^0(A) \ge 3$. 
Then by Theorem \ref{All-Uideal}, we can find a sequence of Ulrich cycles 
$Z_1,\ldots,Z_s$ and positive cycles $0< Y_1 \le \cdots \le Y_s \le Z_0$ satisfying all 
conditions in Theorem \ref{All-Uideal}
so that 
\begin{equation}
\left\{
\begin{array}{rcl}
Z=Z_s & = & Z_{s-1}+Y_s, \\
Z_{s-1} &=& Z_{s-2} + Y_{s-1}, \\
& \vdots & \\
Z_1 & = & Z_0 + Y_1. 
\end{array}
\right.
\end{equation}
Then $Z \le (s+1)Z_0$ and $Z \not \le s Z_0$. 
In particular, $\fkm^{s} \not \subseteq I$ and $\fkm^{s+1} \subseteq I$. 
Moreover, $I$ is a special ideal by Theorem \ref{Spcycle}.
We have only to show the following claim.
\begin{description}
\item[Claim] There exists a minimal set of 
generators $\{u_1,\ldots,u_p,t\}$ such that 
$I=(u_1,\ldots,u_p,t^{s+1})$. 
\end{description}
\par
Set $I_{s-1} = \H^0(X, \mathcal{O}_X(-Z_{s-1}))$.  
Then $I_{s-1}$ is also an Ulrich ideal. 
So we may assume that we can write $I_{s-1} = (u_1,\ldots,u_p,t^{s})$ 
for some minimal set of generators of $\fkm$. 
Since $\fkm (u_1,\ldots,u_p) \subseteq I$ and $\fkm^s \not \subseteq I$, we have that 
$t^s \notin I$. 
Hence by $\ell_A(I_{s-1}/I) = 1$, we can choose an element $a_i \in A$ such that 
$u_i - a_i t^s \in I$ for every $i$. 
By replacing $u_i$ with $u_i - a_it^s$, we may assume that $I'=(u_1,\ldots,u_p,t^{s+1}) \subseteq I$. 
As $\ell_A(I_{s-1}/I')=1$ and $I \ne I_{s-1}$, we can conclude that $I=I'$, as required. 
\end{proof}

\section{Examples} \label{Examples}

Throughout this section, let $k$ be an algebraically closed field 
of characteristic $0$.  
Let $\mathcal{X}_A$  denote the set of nonparameter Ulrich ideals of $A$.

\par 
Let $A$ be a rational double point of type $(A_n)$. 
Then the following example indicates that for any Ulrich module with respect to 
$I$ is a direct summand of $\Syz_A^2(A/I)$. 

\begin{ex} \label{ex-A4}
Let $A=k[[x,y,z]]/(x^2+y^4+z^2) \cong k[[s^4,st,t^4]]$ be a two-dimensional rational 
double point of type $(A_4)$.  
Theorem \ref{Main-RDP} implies that $\calX_A=\{\fkm, I_1\}$, 
where $\fkm =(x,y,z)=(s^4,st,t^4)$ and $I_1=(x,y^2,z)=(s^4,s^2t^2,t^4)$. 
\par
The corresponding anti-nef cycle to $\fkm$ (resp. $I_1$) on the minimal resolution 
is 
\par \vspace{5mm}
\begin{picture}(400,30)(10,0)
 \thicklines
\put(100,10){$Z_{0}=$}
\put(135,18){{\tiny $1$}} 
\put(135,0){{\tiny $E_1$}}
\put(140,12){\circle{8}}
\put(145,12){\line(1,0){20}}
\put(165,18){{\tiny $1$}}
\put(165,0){{\tiny $E_2$}}
\put(170,12){\circle{8}}
\put(175,12){\line(1,0){20}}
\put(195,18){{\tiny $1$}}
\put(195,0){{\tiny $E_3$}}
\put(200,12){\circle{8}}
\put(250,8){$\bigg($}
\put(260,10){resp.}
\put(300,10){$Z_{1}=$}
\put(335,18){{\tiny $1$}} 
\put(335,0){{\tiny $E_1$}}
\put(340,12){\circle{8}}
\put(345,12){\line(1,0){20}}
\put(365,18){{\tiny $2$}}
\put(365,0){{\tiny $E_2$}}
\put(370,12){\circle{8}}
\put(375,12){\line(1,0){20}}
\put(395,18){{\tiny $1$}}
\put(395,0){{\tiny $E_3$}}
\put(400,12){\circle{8}}
\put(410,8){$\bigg).$}
\end{picture}
\par \vspace{3mm} \par \noindent
Moreover, if we put 
\[
M_0=A,\quad M_1 = As+At^3,\quad M_2 = As^2+At^2,\;\; \text{and}\;\; M_3=As^3+At, 
\]
then they are representatives of indecomposable maximal Cohen--Macaulay $A$-modules, and 
thus any maximal Cohen--Macaulay $A$-module can be written as 
\[
 A^{\oplus k} \oplus M_1^{\oplus k_1} \oplus M_2^{\oplus k_2} \oplus M_3^{\oplus k_3}. 
\]
Note that $\Syz_A^2(A/\fkm) \cong M_1 \oplus M_3$ and $\Syz_A^2(A/I_1) \cong M_2^{\oplus 2}$. 
Moreover, Theorem \ref{Main-RDP} says that any Ulrich $A$-module with respect to $\fkm$ (resp. $I_1$)
can be written as 
\[
 M_1^{\oplus k_1} \oplus M_2^{\oplus k_2} \oplus M_3^{\oplus k_3} 
\quad \big(\text{resp.} \; M_2^{\oplus k_2} \big).
\]
\end{ex}

\par
In general, there exists an Ulrich $A$-module with respect to $I$ but not 
a direct summand of $\Syz_A^i(A/I)$. 

\begin{ex} \label{ex-E8}
Let $A=k[[x,y,z]]/(x^3+y^5+z^2)$ be a rational double point of type $(E_8)$. 
Then the set of Ulrich ideals is $\mathcal{X}_A=\{\fkm,I\}$, where $I=(x,y^2,z)$ with $\ell_A(A/I)=2$. 
\par \vspace{5mm}
\begin{picture}(400,35)(0,0)
    \thicklines
\put(90,10){$Z_1=$}
  \put(125,18){{\tiny $4$}}
  \put(125,0){{\tiny $E_1$}}
\put(130,12){\circle*{8}}
\put(135,12){\line(1,0){20}}
  \put(155,18){{\tiny $7$}}
  \put(155,0){{\tiny $E_2$}}
\put(160,12){\circle{8}}
\put(165,12){\line(1,0){20}}
  \put(181,18){{\tiny $10$}}
  \put(183,0){{\tiny $E_3$}}
\put(190,12){\circle{8}}
\put(195,12){\line(1,0){20}}
  \put(215,18){{\tiny $8$}}
  \put(215,0){{\tiny $E_4$}}
\put(220,12){\circle{8}}
\put(225,12){\line(1,0){20}}
  \put(245,18){{\tiny $6$}}
  \put(245,0){{\tiny $E_5$}}
\put(250,12){\circle{8}}
\put(255,12){\line(1,0){20}}
  \put(275,18){{\tiny $4$}}
  \put(275,0){{\tiny $E_6$}}
\put(280,12){\circle{8}}
\put(285,12){\line(1,0){20}}
  \put(305,18){{\tiny $2$}}
   \put(305,0){{\tiny $E_7$}}
\put(310,12){\circle{8}}
\put(190,16){\line(0,1){15}}
  \put(178,35){{\tiny $5$}}
   \put(195,35){{\tiny $E_8$}}
\put(190,34){\circle{8}}
\end{picture}
\par \vspace{3mm}
Let $M_i$ denote the indecomposable maximal Cohen--Macaulay $A$-module 
corresponding to $E_i$ (up to equivalence) via the McKay correspondence 
for every $i=1,\ldots,8$. 
Then we have $\Syz_A^2(A/I) \cong M_1$.  
Indeed, since $\Syz_A^2(A/I)$ is an Ulrich $A$-module with respect to $I$, 
it is isomorphic to $M_1^{\oplus k}$ for some $k \ge 1$. 
Then $k=1$ because $\rank \Syz_A^2(A/I)= \rank M_1=2$. 
\par 
Next we see that $\Syz_A^2(A/\fkm) \cong M_7$. Set $\Omega = \Syz_A^2(A/\fkm)$.   
As $\rank_A \Omega=2$, we have $\Omega \cong M_1$ or $\Omega \cong M_7$. 
It follows from \cite[Corollary 7.7]{GOTWY} that $\Omega \cong M_7$. 
\par 
Similarly, one can easily see that $\Syz_A^i(A/\fkm) \cong M_7$ and $\Syz_A^i(A/I) \cong M_1$ 
for every $i \ge 2$. 
Hence $M_2$ cannot be written as a direct summand of $\Syz_A^i(A/J)$ for any Ulrich ideal $J$.  
\end{ex}

\par
Two-dimensional rational double points $(A_n)$ (Gorenstein quotient singularities) 
admits a sequence of Ulrich ideals of length $m=\lceil \frac{n}{2} \rceil$:
\[
 (x,y^m,z) \subset \cdots \subset (x,y^2,z) \subset (x,y,z).
\]
However the following example shows that each two-dimensional non-Gorenstein cyclic quotient singularity 
has a unique Ulrich ideal (that is, the maximal ideal $\fkm$). 

\begin{ex}[\textbf{Cyclic quotient singularity}] \label{cyclic}
Let $A$ be a two-dimensional cyclic quotient singularity of type $\frac{1}{n}(1,q)$, where 
$q$ and $n$ are integers with $1 < q < n$, $(q,n)=1$. 
Namely, $A$ is the invariant subring of the cyclic group generated by 
\[
 g= \left[
\begin{array}{cc}
\varepsilon_n & 0 \\
0 & \varepsilon_n^q 
\end{array}
\right],
\]
where $\varepsilon_n$ denotes the primitive $n$th root of $1 \in k$. 
\par  
Now suppose that $A$ is not Gorenstein, that is, $q+1$ is not divided by $n$. 
Then there exists an exceptional curve $E_i$ so that $b:=-E_i^2 \ge 3$. 
In particular, $KE_i = b-2 \ge 1$. 
Let $\Gamma$ be the dual graph of the minimal resolution of singularities $X \to \Spec A$.
\par \vspace{4mm}
\begin{picture}(400,40)(0,0)
    \thicklines
\put(55,35){{\tiny $1$}}
\put(55,10){{\tiny $E_1$}}
\put(60,27){\circle{14}}
\put(67,27){\line(1,0){16}}
\put(87,23){$\cdots$}
\put(107,27){\line(1,0){16}}
\put(130,27){\circle{14}}
\put(125,35){{\tiny $1$}}
\put(125,10){{\tiny $E_{i-1}$}}
\put(137,27){\line(1,0){16}}
\put(160,27){\circle{14}}
  \put(155,24){{\tiny $-b$}}
\put(155,10){{\tiny $E_i$}}
\put(155,35){{\tiny $1$}}
\put(167,27){\line(1,0){16}}
\put(190,27){\circle{14}}
\put(185,35){{\tiny $1$}}
\put(185,10){{\tiny $E_{i+1}$}}
\put(197,27){\line(1,0){16}}
\put(217,23){$\cdots$}
\put(237,27){\line(1,0){16}}
\put(260,27){\circle{14}}
\put(255,35){{\tiny $1$}}
\put(255,10){{\tiny $E_r$}}
\end{picture}
\par \vspace{4mm}
It is well-known that $\fkm$ is an Ulrich ideal. 
Now suppose that there exists an Ulrich ideal other than $\fkm$. 
Then we can take $Y_1$ satisfying the conditions (3)(a)(b) in Theorem \ref{All-Uideal}.
In particular, $Z_0Y_1=0$ and $K(Z_0-Y_1)=0$ and $0 < Y_1 \le Z_0$. 
Set $Y_1 = \sum_{j \in J} E_j$ for some non-empty subset $J$ of $\{1,\ldots,r\}$. 
\par
If $i \in J$, then $Z_0E_i=0$ because $Z_0Y_1=0$. 
On the other hand, $Z_0E_i \le E_i^2+2 =2-b \le -1$. This is a contradiction. 
Hence $i \notin J$. Then $E_i \subset \Supp(Z_0-Y_1)$. 
This implies that $KE_i=0$ because $K(Z_0-Y_1)=0$, 
which contradicts the choice of $E_i$.
Hence the maximal ideal is the only Ulrich ideal of $A$. 
\end{ex}

\begin{rem}
Let $A$ be a cyclic quotient singularity as above. 
Then one can obtain many examples of special cycles in general
by a similar argument to the proof of Theorem $\ref{Main-RDP}$. 
\end{rem}

\begin{ex}[\textbf{Rational triple points}] \label{abc}
Let $a \ge b \ge c \ge 2$. 
If we set 
$A=k[[T,sT^a,s^{-1}T^b,(s+1)^{-1}T^c]]$, 
then it is a two-dimensional 
rational singularity with $\e_{\fkm}^0(A)=3$ and 
\begin{eqnarray*}
A &\cong & 
k[[t,x,y,z]]/(xy-t^{a+b},xz-t^{a+c}+zt^a,yz-yt^c+zt^b)\\[2mm]
& \cong & k[[t,x,y,z]]\bigg/I_2 \left(\begin{array}{ccc} x & t^b & t^c-z \\ t^a & y & z \end{array} \right).
\end{eqnarray*}
Then $I_k=(t^k,x,y,z)$ is an Ulrich ideal of colength $k$ 
for every $k$ with 
$1 \le k \le c$.
In fact, if we put $Q_k=(t^k,x+y+z)$, then $I_k=Q_k+(x,y)$ and $I_k^2=Q_kI_k$. 
Furthermore, we have $e_{I_k}^0(A)=\ell_A(A/Q_k) = 3k = (\mu(I)-1)\cdot \ell_A(A/I)$. 
Hence $I_k$ is an Ulrich ideal. 
\par \vspace{2mm}
Let $a=b=c=3$. 
Now consider the corresponding cycles. 
 
\par \vspace{3mm}
\begin{picture}(200,60)(-40,0)
    \thicklines
 \put(-5,10){$Z_0=$}
\put(30,7){\circle{8}}
   \put(25,13){{\tiny $1$}}
\put(35,7){\line(1,0){19}}
  \put(55,13){{\tiny $1$}}
\put(60,7){\circle{8}}
\put(65,7){\line(1,0){18}}
  \put(85,4){{\tiny $-3$}}
  \put(83,15){{\tiny $1$}}
\put(90,7){\circle{14}}
\put(97,7){\line(1,0){18}}
  \put(115,13){{\tiny $1$}}
\put(120,7){\circle{8}}
\put(125,7){\line(1,0){19}}
  \put(145,13){{\tiny $1$}}
\put(150,7){\circle{8}}
\put(90,13){\line(0,1){13}}
  \put(83,33){{\tiny $1$}}
\put(90,29){\circle{8}}
\put(90,34){\line(0,1){13}}
  \put(83,55){{\tiny $1$}}
\put(90,51){\circle{8}}
\end{picture} 
\begin{picture}(200,60)(-40,0)
    \thicklines
 \put(-5,10){$Z_1=$}
\put(30,7){\circle{8}}
   \put(25,13){{\tiny $1$}}
\put(35,7){\line(1,0){19}}
  \put(55,13){{\tiny $2$}}
\put(60,7){\circle{8}}
\put(65,7){\line(1,0){18}}
  \put(85,4){{\tiny $-3$}}
  \put(83,15){{\tiny $2$}}
\put(90,7){\circle{14}}
\put(97,7){\line(1,0){18}}
  \put(115,13){{\tiny $2$}}
\put(120,7){\circle{8}}
\put(125,7){\line(1,0){19}}
  \put(145,13){{\tiny $1$}}
\put(150,7){\circle{8}}
\put(90,13){\line(0,1){13}}
  \put(83,33){{\tiny $2$}}
\put(90,29){\circle{8}}
\put(90,34){\line(0,1){13}}
  \put(83,55){{\tiny $1$}}
\put(90,51){\circle{8}}
\end{picture} 
\par \vspace{3mm}
\begin{picture}(200,60)(-40,0)
    \thicklines
 \put(-5,10){$Z_2=$}
\put(30,7){\circle{8}}
   \put(25,13){{\tiny $1$}}
\put(35,7){\line(1,0){19}}
  \put(55,13){{\tiny $2$}}
\put(60,7){\circle{8}}
\put(65,7){\line(1,0){18}}
  \put(85,4){{\tiny $-3$}}
  \put(83,15){{\tiny $3$}}
\put(90,7){\circle{14}}
\put(97,7){\line(1,0){18}}
  \put(115,13){{\tiny $2$}}
\put(120,7){\circle{8}}
\put(125,7){\line(1,0){19}}
  \put(145,13){{\tiny $1$}}
\put(150,7){\circle{8}}
\put(90,13){\line(0,1){13}}
  \put(83,33){{\tiny $2$}}
\put(90,29){\circle{8}}
\put(90,34){\line(0,1){13}}
  \put(83,55){{\tiny $1$}}
\put(90,51){\circle{8}}
\end{picture} 
\par \vspace{5mm}
Note that all special cycles are Ulrich cycles. 
%
\end{ex}

\begin{ex}
Let $A=k[[s^7,s^4t,st^2,t^7]]$. 
Then $A$ is a two-dimensional cycle quotient singularity, which is an invariant subring 
of a cyclic group generated by 
\[
g= \left(
\begin{array}{cc}
\varepsilon_7 & \\
 & \varepsilon_7^3
\end{array}
\right).
\]
Since $7/3 = 7 - \dfrac{1}{2-1/2}$, 
the dual graph can be written as the following form:
\par \vspace{2mm}
\begin{picture}(400,30)(10,0)
 \thicklines
\put(135,8){{\tiny $-3$}} 
\put(135,-2){{\tiny $E_1$}}
\put(140,12){\circle{14}}
\put(147,12){\line(1,0){16}}
\put(165,-2){{\tiny $E_2$}}
\put(170,12){\circle{14}}
\put(177,12){\line(1,0){16}}
\put(195,-2){{\tiny $E_3$}}
\put(200,12){\circle{14}}
\end{picture}
\par \vspace{3mm} \par \noindent
If we put $N_a=\langle s^it^j \,|\,i+3j \equiv a \pmod{3} \rangle$ for $a=0,1,\ldots,6$, then 
$\{N_a\}_{a=0}^{6}$ forms a representative of isomorphism classes of indecomposable 
maximal Cohen--Macaulay $A$-modules. 
Then $M_1 = N_3=As+At^5$, $M_2 = N_2 = As^2+At^3$ and $M_3 = N_1 = As^3+At$ 
are indecomposable special Cohen--Macaulay $A$-modules. 
On the other hand, $N_4=As^4+Ast+At^6+$, $N_5=As^5+As^2t+At^4$, $N_6=As^6+As^3t+At^2$ 
are indecomposable Ulrich $A$-modules with respect to the maximal ideal $\fkm$.  
\par  
All special cycles are $Z_0=E_1+E_2+E_3$ and $Z_1 = E_1+2E_2+E_3$. 
(Note that $Z_1$ is not an Ulrich cycle; see Example \ref{cyclic}).
Any special Cohen--Macaulay module with respect to $I_{Z_1}$ 
is of the form $M_2^{\oplus k}$. 
\par 
However, we do not have the complete list of Ulrich modules with respect to some ideal. 
\end{ex}


\end{document}